\newcommand{\arxiv}[1]{\href{http://arxiv.org/abs/#1}{\tt arXiv:\nolinkurl{#1}}}
\newtheorem{Theorem}{Theorem}[section]
\newtheorem{Proposition}[Theorem]{Proposition}
\newtheorem{Lemma}[Theorem]{Lemma}
\newtheorem{Nested Lemma}[Theorem]{Nested Lemma}
\newtheorem{Corollary}[Theorem]{Corollary}
\theoremstyle{definition}
\newtheorem{Definition}[Theorem]{Definition}
\theoremstyle{remark}
\newtheorem{Remark}[Theorem]{Remark}
\newcommand{\pcom}{\color{blue}}
\newcommand{\preci}{{\prec^{s_i}}}
\DeclareMathOperator{\sgn}{sgn}
\renewcommand{\@makefnmark}{\mbox{\textsuperscript{}}}
\newcommand{\B}{\mathfrak{B}}
\newcommand{\cL}{\mathcal{L}}
\newcommand{\coa}{\mathbbm{c}}
\newcommand{\wt}{\text{wt}}
\newcommand{\eps}{\varepsilon}
\newcommand{\cc}{\mathbf{c}}
\newcommand{\dd}{\mathbf{d}}
\newcommand{\cB}{\mathcal{B}}
\renewcommand{\cL}{\mathcal{L}}
\newcommand{\e}{\tilde{e}}
\newcommand{\f}{\tilde{f}}
\newcommand{\tpsi}{\tilde{\psi}}
\newcommand{\cA}{\mathcal{A}}
\newcommand{\ATT}{A_2^{(2)}}
\newcommand{\fg}{\mathfrak{g}}
\newcommand{\fh}{\mathfrak{h}}
\newcommand{\nc}{\newcommand}
\nc{\cosoc}{\operatorname{cosoc}}
\nc{\soc}{\operatorname{soc}}
\nc{\asl}{\widehat{\mathfrak{sl}}}
\nc{\g}{\mathfrak{g}}
\nc{\br}{\Bbb R}
\nc{\bz}{\Bbb Z}
\nc{\bn}{\Bbb N}
\nc{\omu}{\overline{\mu}}
\nc{\olambda}{\overline{\lambda}}
\nc{\oa}{\overline{a}}
\nc{\Irr}{\text{Irr }}
\nc{\oT}{\overline{T}}
\nc{\oR}{\overline{R}}
\nc{\oI}{\overline{I}}
\nc{\bfv}{{\bf v}}
\nc{\vareps}{\varepsilon}
\nc{\MV}{\mathcal{MV}}
\nc{\be}{\beta}
\nc{\End}{\text{End}}
\newcommand{\PBW}{{\text{PBW}}}
\newcommand{\U}{U_q(\g)}
\newcommand{\Uplus}{U^+_q(\g)}
\newcommand{\QQ}{\mathbb{Q}}
\newcommand{\ZZ}{\mathbb{Z}}
\newcommand{\Uplusintegral}{{}_{\mathscr{A}}\Uplus}
\newcommand{\IPsi}[2]{\Psi^{#1}_{#2}}
\begin{document}

\title{Affine PBW Bases and Affine MV polytopes}

\author{Dinakar Muthiah}
\address{Dinakar Muthiah, Dept. of Mathematical and Statistical Sciences, University of Alberta}
\email{muthiah@ualberta.ca}

\author{Peter Tingley}
\address{Peter Tingley, Dept. of Maths and Stats, Loyola University Chicago}
\email{ptingley@luc.edu}

\begin{abstract}

We show how affine PBW bases can be used to construct affine MV polytopes, and that the resulting objects agree with the affine MV polytopes recently constructed using either preprojective algebras or KLR algebras. To do this we first generalize work of Beck-Chari-Pressley and Beck-Nakajima to define affine PBW bases for arbitrary convex orders on positive roots. Our results describe how affine PBW bases for different convex orders are related, answering a question posed by Beck and Nakajima. 

\end{abstract}

\maketitle

\tableofcontents

\section{Introduction}

Mirkovi\'c-Vilonen (MV) polytopes were developed by Kamnitzer \cite{Kam1,Kam2} and Anderson \cite{And} to study complex simple Lie algebras and their finite-dimensional representations. 
They arose from the geometry of the affine Grassmannian as developed in \cite{MirkovicVilonen04}, but are now known to appear in a number of other places, such as in the representation theory of preprojective algebras \cite{BK,BKT} and of KLR algebras \cite{TW}. 
Much of the combinatorics was also developed by Lusztig in the early 1990s while studying PBW bases, and here we are interested in that point of view.

Lusztig \cite{LusCanonical} defined a family of PBW bases for $U_q^+(\g)$, the positive part of the quantized universal enveloping algebra associated to the Lie algebra $\g$, which depend on a chosen reduced expression of the longest element $w_0$ of the Weyl group. PBW bases are crystal bases in the sense of Kashiwara~\cite{Kashiwara:1995}, so each basis is in canonical bijection with the crystal $B(-\infty)$. As shown in \cite{Kam1}, these bijections are encoded by MV polytopes as follows: For each $b \in B(-\infty)$, consider the corresponding MV polytope $MV_b$. Each reduced decomposition of $w_0$ corresponds to a path through $MV_b$, and the lengths of the edges in that path determine the PBW basis vector corresponding to $b$. 

There is now a notion of MV polytopes for affine Kac-Moody algebras \cite{BDKT:13, BKT, TW}. 
PBW bases for affine Kac-Moody algebras have also been constructed (see works of Beck-Chari-Pressley \cite{BCP} and Beck-Nakajima \cite{BN}). In \cite{MT} we established that, in rank two, affine MV polytopes and affine PBW bases are related as one would expect. Here we extend that result to include all affine types. 
In fact, this gives a way to define affine MV polytopes algebraically, without having to introduce the auxiliary objects (quiver varieties or KLR algebras) needed in previous approaches. 

In affine type there is no longest element, so we cannot discuss reduced expressions of $w_0$. Instead we consider convex orders on the set of positive roots. 
One difficulty is that the construction of PBW bases in \cite{BCP,BN} is only given  for certain special convex orders. We first extend their definitions to include all convex orders, and show that the construction is independent of certain choices (this has also been done very recently by McNamara \cite{McN} using KLR algebras). 
We use PBW bases to define a map from $B(-\infty)$ to decorated polytopes where, as in finite type, the exponents of the PBW monomials are encoded as the edge lengths (and decoration) along certain paths through the polytopes (see Theorem \ref{thm:ispw}). We then show that our PBW polytopes agree with the MV polytopes from \cite{BKT, TW} (see Theorem \ref{thm:PBWMV}).
We do this by giving a characterization of the map taking $b \in B(-\infty)$ to its PBW polytope (Theorem \ref{th:MV-def}) which we think is interesting in its own right.

\subsection{Acknowledgements}
We thank Joel Kamnitzer and Peter McNamara for many useful discussions. D.M. was supported by a PIMS Postdoctoral Fellowship. P.T. was partially supported by NSF grant DMS-1265555. 

\section{Background}

\subsection{Notation and conventions}

\begin{itemize}

\item $I=\{ 0, \ldots, n\}$ is the index set of a connected affine Dynkin diagram, where $0$ is the distinguished node as in \cite[Section 2.1]{BN}. Let $\bar I = I \backslash \{0\}$; this is a finite-type Dynkin diagram. These conventions agrees with the notation in \cite[Section 4.8]{Kac} except in the case of a Dynkin diagram of type $A^{(2)}_{2n}$. Let $(a_{i,j})_{i,j\in I}$ denote the corresponding Cartan matrix.

\item $\g$ is the corresponding affine Kac-Moody algebra, and $\bar \g$ be the finite-type Lie algebra corresponding to $\bar I$. Fix a triangular decomposition $\g = \mathfrak{n_-} \oplus \mathfrak{h} \oplus \mathfrak{n_+}$ (coming from the construction of $\g$ by generators and relations). 

\item $W$ is the Weyl group for $\g$ and $\overline{W}$ is the Weyl group for $\bar \g$.

\item $\Delta$ is the affine root system of $\g$ and $\bar \Delta$ the root system of $\bar \g$. We denote the simple roots by $\alpha_i$ and $\bar \alpha_i$ respectively.

\item $\Delta^+$ is the set of positive roots of $\g$.

\item $\Delta_+^{min}$ is the set of positive roots $\alpha$ such that $x \alpha$ is not a root for any $0 < x <1$. Since we restrict to affine type, $\Delta_+^{min}$ consists of the positive real roots along with the minimal imaginary root $\delta$. 

\item $\bar P$ and $\bar Q$ are the weight and root lattices for $\bar \g$. 

\item $\widetilde W= \overline{W} \ltimes \bar P$ is the extended affine Weyl group. For $\lambda \in \bar P$ we write $t_\lambda$ for $(e, \lambda) \in  \widetilde W.$ We also can write $\widetilde W = \mathcal{T} \ltimes W$, where $\mathcal{T}$ is a subgroup of the group of diagram automorphisms of $\g$.
 
 \item $(,)$ is the unique non-degenerate, Weyl-group invariant bilinear form on the weight space normalized so that $(\lambda, \delta)= \langle c, \lambda \rangle$, where $\delta$ is the minimal imaginary root and $c$ is the canonical central element of $\fg$. Write $\delta = \sum a_i h_i$ and $c = \sum a^\vee_i h_i$, where $\{ h_i \}_{i \in \bar I}$ is the set of simple coroots. Then we have $(\alpha_i, \alpha_j) = a_i^\vee a_i^{-1} a_{i,j}$. See \cite[Section 2.1]{BN}.

\item Following \cite[Section 2.2]{BN}, for each $i\in \bar I$, define $d_i = \max \{ 1, \frac{(\alpha_i,\alpha_i)}{2} \}$. When $\fg$ is untwisted, all the $d_i$ are equal to one. 
Define $q_s = q^{1/d}$, where $d = \max \{d_i \}$. For each $i\in I$, we define $q_i = q^{\frac{(\alpha_i,\alpha_i)}{2}}$, which is a power of $q_s$.

\item Define $r_i \in \{ 1, 2 \}$ by $r_i=1$ unless $\fg=A^{(n)}_{2n}$ and $i=n$, in which case $r_i = 2$.

\item $\U$ is the quantized universal enveloping algebra for $\g$, which is an algebra over $\QQ(q_s)$. The algebra $\U$ is generated by $\{ E_i \mid i \in I\}$ and $\{ F_i \mid i \in I\}$ (the standard Chevalley generators) and elements of the form $q^h$ for $h \in \frac{1}{d}P^*$ ($P^*$ is the coweight lattice for $\g$). 

We follow the conventions in \cite[Section 2.2]{BN}. 
For example, we have the following relation
\begin{align*}
  E_i F_i - F_i E_i = \frac{q_i^{h_i} - q_i^{-h_i}}{q_i - q_i^{-1}}
\end{align*}
where $\{h_i\}_{i \in I}$ are the simple coroots. See \cite[Section 2.2]{BN} for the full set of relations.

\item $\Uplus$ is the subalgebra of $\U$ generated by $\{ E_i \mid i \in I\}$. 

\item Kashiwara's involution $*: \U \rightarrow \U$ is the algebra anti-involution of $\U$ which fixes all the Chevalley generators. We denote the result of applying $*$ to $x$ by $x^*$. Note that $*$ preserves $\Uplus$. 
  
\item Let us define the bar-involution on the field $\QQ(q_s)$ by $\overline{f(q_s)} = f(q_s^{-1})$. The algebra $\U$ has a bar-involution defined by requiring $\overline{1} = 1$, $\overline{E_i} = E_i$ and $\overline{F_i} = F_i$ for all $i \in I$, and $\overline{f \cdot x \cdot y} = \overline{f} \cdot \overline{x} \cdot \overline{y}$ for all $f \in \QQ(q_s)$ and $x,y \in \U$. 

\item $\cB$ is Lusztig's canonical basis (equivalently Kashiwara's global crystal basis) of $\Uplus$. 

\item $\cA$ is the ring of rational functions in ${\Bbb C}(q)$ which are regular at $q=\infty$, and $\mathcal{L}= \text{span}_\cA \cB$. Recall that $\cB$ descends to a basis of $\cL/q^{-1} \cL$.

\item $\e_i,\f_i$ are Kashiwara's crystal operators on $\cB$ (as a basis for $\cL/q^{-1} \cL$).

\item The algebra $\Uplus$ has an ``integral'' form $\Uplus_{\mathscr{A}}$ defined over the ring $\mathscr{A} = \ZZ[q_s,q_s^{-1}]$. It is the $\mathscr{A} = \ZZ[q_s,q_s^{-1}]$-subalgebra of $\Uplus$ generated by the quantum-divided power vectors $E_i^{(n)} = \frac{E_i^n}{[n]_{q_i}!}$.

\end{itemize}

\subsection{Convex orders}

Recall that $\Delta_+^{min}$ is the set of positive real roots along with the minimal imaginary root $\delta$.

\begin{Definition} \label{def:convex}
  A {\bf convex order} is a total order $\prec$ on $\Delta_+^{min}$ (or more generally on any set of vectors) such that, given $S, S' \subset \Delta_+^{min}$ with $S \cup S' = \Delta_+^{min}$ and $ \alpha' \prec \alpha$ for all $\alpha \in S, \alpha' \in S'$, the convex cones $\text{span}_{{\mathbb R}_{\geq 0}} S$ and $\text{span}_{{\mathbb R}_{\geq 0}} S'$ intersect only at the origin. 
\end{Definition}

\begin{Remark}
People often use the notion of a clos order instead of Definition \ref{def:convex}, where a
 total order $\prec$ on $\Delta_+^{min}$ is called clos if, whenever  $\alpha, \beta$ and $\alpha+\beta$ are all in  $\Delta_+$, $\alpha+\beta$ occurs between $\alpha$ and $\beta$. This is equivalent in finite and affine types as shown in e.g. \cite[Lemma 2.11]{BKT}.
\end{Remark}

The following is immediate from the classification of convex orders in \cite{Ito}.
\begin{Proposition} \label{prop:1ro} If every real root in $\Delta_+$ is finitely far from one end of the order, then there is 
an infinite expression $\cdots s_{-2} s_{-1}s_0 s_1 s_2 \cdots$ such that, defining 
$$\beta_i = 
\begin{cases} s_1 \cdots s_{i-1} \alpha_i \quad \text{ if } i \geq 0 \\
s_0 s_{-1} \cdots s_{i+1} \alpha_i \quad \text{ if } i \leq 0
\end{cases}$$
for each $i \in {\Bbb Z}$,
the order is
$\mbox{} \quad \beta_0 \prec \beta_{-1} \prec \cdots \prec \delta \prec \cdots \prec \beta_{2} \prec \beta_{1}.$ 
\end{Proposition}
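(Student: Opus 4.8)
The plan is to deduce this from Ito's classification of convex orders on the affine positive real roots, together with the standard dictionary between reduced words (finite and infinite) in the affine Weyl group and orderings of root subsets. First I would recall the precise statement from \cite{Ito}: every convex order on $\Delta_+^{min}$ decomposes the positive real roots into a ``left'' part and a ``right'' part, each of which is enumerated by a (possibly infinite) word that is reduced in the appropriate sense, with the imaginary root $\delta$ sitting in the middle, and the two ends being $\overline{W}$-translates of convex orders of the underlying finite root system. Under the hypothesis that every real root is finitely far from one end, one of these two ``reduced-word tails'' must be genuinely one-sided infinite (ray-like) and exhaust \emph{all} the positive real roots, because if infinitely many real roots accumulated at \emph{each} end we could not have every root finitely far from one end. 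So the combinatorial content is: the order looks like $\gamma_0 \prec \gamma_1 \prec \gamma_2 \prec \cdots \prec \delta \prec \cdots$ on one side, but here the hypothesis forces in fact \emph{both} sides to be describable by a single bi-infinite sequence of simple reflections.

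Concretely, I would argue as follows. Choose the end of the order from which every real root is finitely far; after possibly reversing, assume it is the ``$\beta_1$ end'', i.e. the top. Enumerate the real roots from that end: there is a largest root $\beta_1$, a second-largest $\beta_2$, and so on; by convexity (equivalently the clos condition from the Remark) the partial products of the corresponding reflections $s_1 s_2 \cdots s_k$ are reduced, so we get a one-sided infinite reduced word $s_1 s_2 \cdots$ and $\beta_i = s_1 \cdots s_{i-1} \alpha_i$. Symmetrically, enumerate real roots from the \emph{other} end: $\beta_0 \prec \beta_{-1} \prec \cdots$, giving a reduced word $s_0 s_{-1} s_{-2} \cdots$ with $\beta_i = s_0 s_{-1} \cdots s_{i+1}\alpha_i$ for $i \le 0$. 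The key point, supplied by Ito's classification, is that these two half-infinite reduced words fit together into one bi-infinite word $\cdots s_{-1} s_0 s_1 \cdots$ that is reduced in the strong sense (every finite subword is reduced), and that together the real roots $\{\beta_i : i \le 0\}$ and $\{\beta_i : i \ge 1\}$ exhaust all of $\Delta_+ \cap (\text{real roots})$ with no overlap — this is exactly where the hypothesis ``finitely far from one end'' is used, since it rules out the degenerate situation where one side is finite and the real roots on the other side do not biject with $\{\beta_i\}$ in the claimed way. Finally, $\delta$ and its multiples (which all coincide in $\Delta_+^{min}$, leaving just $\delta$) must lie strictly between the two families because $\delta$ is in the interior of the cone spanned by all real roots, so convexity forces it to separate the ``negative-index'' and ``positive-index'' roots.

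The main obstacle, and the one place real work is needed rather than bookkeeping, is verifying that the hypothesis genuinely produces a \emph{two-sided} infinite word of the stated shape — i.e. that the decomposition of the order given by \cite{Ito} really has the form $\cdots s_{-1}s_0 \mid s_1 s_2 \cdots$ with $\delta$ in the middle, rather than, say, a finite word followed by $\delta$ followed by an infinite word, or an order with no clean $\delta$-in-the-middle structure. Here I would invoke Ito's result directly: it says every convex order on an affine root system is of one of finitely many shapes, and the shape in which $\delta$ does not separate the real roots is precisely the one where infinitely many real roots are \emph{not} finitely far from either end; excluding that by hypothesis leaves exactly the asserted form. Once the shape is pinned down, identifying $\beta_i = s_1\cdots s_{i-1}\alpha_i$ (resp. $s_0\cdots s_{i+1}\alpha_i$) is the standard fact that in a reduced word $s_{j_1}\cdots s_{j_m}$ the roots $s_{j_1}\cdots s_{j_{k-1}}\alpha_{j_k}$ are exactly the positive roots sent negative, listed in convex order, and this passes to the infinite setting without change. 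I would therefore present the proof as: (1) cite \cite{Ito} for the shape; (2) observe the hypothesis selects the ``$\delta$ in the middle'' shape; (3) read off the bi-infinite word and the formula for $\beta_i$ from the reduced-word/inversion-set dictionary.
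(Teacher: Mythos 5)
Your high-level strategy is the same one the paper uses: the result is deduced by appealing directly to Ito's classification, and your final summary (``cite Ito for the shape; the hypothesis selects the one-row type; read off $\beta_i$ from the reduced-word/inversion dictionary'') is exactly how the paper dispatches it in a single sentence. The reduced-word/inversion-set dictionary you invoke is indeed standard and passes to infinite words.

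However, two of your explanatory claims misread the hypothesis and would be misleading if kept. First, you write ``Choose the end of the order from which every real root is finitely far.'' No such end exists: the hypothesis is that each real root is finitely far from \emph{some} end, with the choice of end depending on the root. In a one-row order the roots $\beta_1,\beta_2,\dots$ accumulate at the top and $\beta_0,\beta_{-1},\dots$ at the bottom, and every root is finitely far from its respective end, but neither end is finitely far from all of them. Second, the claim ``if infinitely many real roots accumulated at each end we could not have every root finitely far from one end'' is false for the same reason --- infinitely many real roots \emph{do} accumulate at each end of a one-row order. What the hypothesis actually excludes, in terms of Ito's classification, are the multi-row orders, in which some real roots sit infinitely far from \emph{both} ends. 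Once this is the criterion you use to select the one-row shape, the rest of your argument (two half-infinite reduced words glued across $\delta$, with $\beta_i$ given by the inversion formula) is the intended content, so I would simply replace those two sentences with the correct statement of what is being ruled out.
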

Following \cite{Ito} we call orders that arise as in Proposition \ref{prop:1ro} {\bf one-row orders.}

\begin{Definition} \label{def:*order}
If $\prec$ is a convex order, denote by $\prec^*$ the reversed order. 
\end{Definition}

\begin{Definition}
Fix a convex order $\prec$ such that $\alpha_i$ is minimal (resp. maximal). Define a new convex order $\prec^{s_i}$ by 
$$\be \prec \gamma
\Leftrightarrow s_i\be \prec^{s_i}s_i\gamma \quad \text{ if } \quad  \be, \gamma\neq
\alpha_i$$
and such that $\alpha_i$ is maximal (resp. minimal) for $\prec^{s_i}$.
\end{Definition}

\begin{Definition}
Fix a convex order $\prec$. Let $(i_1, \cdots, i_N)$ be the reduced word  for which 
\begin{align}
  \label{eq:inversion-set-of-w-inv}
\alpha_{i_1} \prec s_{i_1} \alpha_{i_2} \prec \cdots \prec s_{i_1} \cdots s_{i_{N-1}} \alpha_{i_N}
\end{align}
are the first $N$ roots. Let $w = s_{i_1} \cdots s_{i_N}$, and define $\prec^w =(\cdots(( \prec^{s_{i_1}})^{s_{i_2}}) \cdots )^{s_{i_N}}$. 

Similarly, when the roots in \eqref{eq:inversion-set-of-w-inv} are the last roots of the order, we can also define $\prec^w $.

\end{Definition}

\begin{Lemma} \label{lem:near-end}
If $H$ is any hyperplane in $\mathfrak{h}^*$ which does not contain $\delta$,  then all but finitely many roots in $\Delta_+^{min}$ lie strictly on the same side of $H$.
\end{Lemma}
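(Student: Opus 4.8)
The plan is to use the coarse structure of affine real roots: they fall into finitely many cosets of $\Z_{\geq 0}\,\delta$, so any linear functional that does not annihilate $\delta$ becomes---and stays---of one fixed sign on all but boundedly many of them.

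Concretely, I first invoke the standard description of affine root systems (see e.g.\ \cite[Chapters~5--6]{Kac}): there is a \emph{finite} set $\Theta\subset\mathfrak{h}^*$ of ``classical parts'' of roots---in untwisted type $\Theta=\bar\Delta\cup\{0\}$, with the usual modifications in the twisted cases---such that every $\beta\in\Delta_+^{min}$ can be written $\beta=\theta+m\delta$ with $\theta\in\Theta$ and $m\in\Z_{\geq 0}$; in particular $\delta$ itself is $\theta=0$, $m=1$. Consequently, for any real number $C$ the set $\{\,\theta+m\delta : \theta\in\Theta,\ 0\le m\le C\,\}$ is finite, being the image of the finite set $\Theta\times\{0,1,\dots,\lfloor C\rfloor\}$.

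Now write $H=\ker f$ for a non-zero linear functional $f\colon\mathfrak{h}^*\to{\mathbb R}$ (here a hyperplane means one through the origin, as in the cone-theoretic setting of Definition~\ref{def:convex}), so the two open sides of $H$ are $\{f>0\}$ and $\{f<0\}$. The hypothesis $\delta\notin H$ says $f(\delta)\ne 0$; after replacing $f$ by $-f$ if necessary we may assume $f(\delta)>0$. Set $M:=\max_{\theta\in\Theta}|f(\theta)|$, which is finite since $\Theta$ is finite. Then any root $\beta=\theta+m\delta\in\Delta_+^{min}$ with $m>M/f(\delta)$ satisfies $f(\beta)=f(\theta)+m\,f(\delta)\ge -M+m\,f(\delta)>0$, i.e.\ it lies strictly on the positive side of $H$. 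The only possible exceptions are the roots $\beta=\theta+m\delta$ with $0\le m\le M/f(\delta)$, and by the previous paragraph there are only finitely many of these. Hence all but finitely many roots in $\Delta_+^{min}$ lie strictly on the same (positive) side of $H$.

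Once the structural input of the second paragraph is in hand the argument is entirely elementary, so I do not expect a genuine obstacle. The one point requiring attention is to state the ``finitely many $\Z_{\geq 0}\delta$-cosets'' fact in a form valid uniformly across all affine types---in particular in type $A_{2n}^{(2)}$, where the normalization of $\delta$ must be the one fixed by the conventions of Section~2 (so that $\delta$ is the \emph{minimal} imaginary root and the integers $m$ above are genuinely non-negative).
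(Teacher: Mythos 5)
Your proof is correct and uses the same key structural fact as the paper's: every root in $\Delta_+^{min}$ is of the form $\theta + m\delta$ with $\theta$ ranging over a finite set, and the hypothesis $\delta\notin H$ forces all roots with $m$ sufficiently large onto one side of $H$. The only cosmetic difference is that you phrase the argument with a defining linear functional $f$ while the paper phrases it with distance to $H$ under a chosen Euclidean metric; these are the same argument.
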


\begin{proof}
Impose any Euclidean metric on $\mathfrak{h}^*$. 
Recall that there is a finite set of vectors $v_1, \ldots v_n$ such that every root is $\Delta_+$ is of the form $n \delta + v_i$ for some $i$ and $n\geq 0$. If $\delta \not \in H$, then, for sufficiently large $N$, the vector $N \delta$ is farther from $H$ then the length of any $v_i$. So, for sufficiently large $N$ and all $i$, $N \delta + v_i$ is on the same side of $H$ as $\delta$. 
\end{proof}

\begin{Lemma} \label{lem:eo}
Fix any finite collection $\beta_1 \cdots \beta_N \in \Delta_+^{min} $ and any convex order $\prec$ on these roots. Then there is a one-row order on $\Delta_+^{min}$ that restricts to $\prec$. 
\end{Lemma}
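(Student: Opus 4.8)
The plan is to use Proposition~\ref{prop:1ro} to trade a one-row order for a pair of infinite reduced words, and then to build those words by completing finite pieces that carry the prescribed data.

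Relabel so that $\beta_1\prec\cdots\prec\beta_N$. By Proposition~\ref{prop:1ro}, giving a one-row order is the same as giving two infinite reduced words $v=s_{j_1}s_{j_2}\cdots$ and $u=s_{i_1}s_{i_2}\cdots$ whose inversion sequences $(\alpha_{j_1},s_{j_1}\alpha_{j_2},\dots)$ and $(\alpha_{i_1},s_{i_1}\alpha_{i_2},\dots)$ list, respectively, the positive real roots that are $\prec\delta$ in increasing order and those that are $\succ\delta$ in decreasing order, using each positive real root exactly once, with $\delta$ inserted between the two halves. So it suffices to produce such $u,v$ and an index $0\le m\le N$ so that $(\beta_1,\dots,\beta_m)$ occurs as a subsequence of the inversion sequence of $v$ and $(\beta_N,\beta_{N-1},\dots,\beta_{m+1})$ as a subsequence of that of $u$; here if $\delta=\beta_k$ for some $k$ we take $m=k-1$ and drop $\delta$ from both sequences. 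I would first reduce to the case $\delta\in\{\beta_i\}$: if not, a short argument with the nested pointed cones $C_j=\mathrm{span}_{\mathbb{R}_{\ge 0}}(\beta_1,\dots,\beta_j)$ --- which by convexity of $\prec$ meet $\mathrm{span}_{\mathbb{R}_{\ge 0}}(\beta_{j+1},\dots,\beta_N)$ only at the origin --- produces an index $m$ for which inserting $\delta$ between $\beta_m$ and $\beta_{m+1}$ gives a convex order on $\{\beta_1,\dots,\beta_N\}\cup\{\delta\}$, and we enlarge our collection by $\delta$. So assume $\delta=\beta_k$, and write $T_-=\{\beta_1,\dots,\beta_{k-1}\}$ (all $\prec\delta$) and $T_+=\{\beta_{k+1},\dots,\beta_N\}$ (all $\succ\delta$); these consist of positive real roots.

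Next I would realize $T_-$ inside a single finite inversion set. Since $T_-$ lies strictly below $\delta$, a separation argument like the one in the proof of Lemma~\ref{lem:near-end} gives a linear functional $f$ with $f(\delta)>0$ and $f<0$ on all of $T_-$; then $\{\gamma \text{ positive real}: f(\gamma)<0\}$ is finite, is biclosed, and hence equals $\mathrm{Inv}(w_-)$ for some $w_-\in W$, and it contains $T_-$. The convex order on $T_-$ then extends to a convex order on the finite biclosed set $\mathrm{Inv}(w_-)$ --- this is standard, and by the Remark after Definition~\ref{def:convex} amounts to the usual bijection between reduced words for $w_-$ and clos orders on its inversion set --- so we obtain a reduced word for $w_-$ whose inversion sequence has $(\beta_1,\dots,\beta_{k-1})$ as a subsequence. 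Build $w_+$ from $T_+$ in the mirror-image way. Then $\mathrm{Inv}(w_-)$ and $\mathrm{Inv}(w_+)$ are disjoint (their elements are $\prec\delta$, resp.\ $\succ\delta$, for $\prec$), and together with $\delta$ between them they carry a convex order.

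It remains to extend $w_-$ and $w_+$ to infinite reduced words $v$ and $u$ whose inversion sets are \emph{complementary}, i.e.\ partition the positive real roots; this is the crux, since arbitrary infinite extensions of $w_\pm$ will in general overlap or miss roots. Here I would invoke the classification of convex orders of \cite{Ito} --- the same input that gives Proposition~\ref{prop:1ro} --- according to which two disjoint finite biclosed sets that are compatible (carry a convex order with one entirely before the other, as $\mathrm{Inv}(w_\pm)$ do) can be completed simultaneously: one continues each by an eventually periodic reduced word (for a suitable translation in $W$) so that the two inversion sets grow to a partition of the positive real roots. Lemma~\ref{lem:near-end} is what makes this completion compatible with the prescribed finite data: it guarantees that the standard periodic tails omit only finitely many real roots, so any such exceptional roots can be absorbed into $w_-$ or $w_+$ at the outset. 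Convexity of the resulting total order across $\delta$ then follows from \cite{Ito} (or from the clos reformulation in the Remark after Definition~\ref{def:convex}). I expect everything except this last simultaneous completion to be routine bookkeeping; the real content of the lemma is precisely that the two local pictures near $\delta$ fit together into a single global one-row order, which is what the structure theory supplies.
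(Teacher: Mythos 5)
Your route is genuinely different from the paper's, which never mentions infinite reduced words: the paper constructs a continuous one-parameter family of hyperplanes $H_t$ through the origin that sweep past the roots, defines $\prec'$ by sweep time, and uses Lemma~\ref{lem:near-end} to see that every real root sits on a hyperplane $H_t$ missing $\delta$ and is therefore finitely far from an end. That produces a convex one-row order on all of $\Delta_+^{min}$ in a single stroke. Your plan of assembling the two infinite reduced words piecewise is conceptually natural but runs into obstructions precisely at the step you identify as the crux, and those obstructions are not merely bookkeeping.

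Two concrete problems. First, the claimed disjointness of $\mathrm{Inv}(w_-)$ and $\mathrm{Inv}(w_+)$ is not established: the parenthetical justification \emph{``their elements are $\prec\delta$, resp.\ $\succ\delta$, for $\prec$''} does not apply, since $\prec$ is only an order on the finite set $\{\beta_1,\dots,\beta_N,\delta\}$, while $\mathrm{Inv}(w_\pm)$ contain many roots outside it. You take $\mathrm{Inv}(w_-)=\{\gamma\colon f(\gamma)<0\}$ and build $\mathrm{Inv}(w_+)$ from a second functional with the opposite sign at $\delta$ (which you need for both sets to be finite via Lemma~\ref{lem:near-end}); the two corresponding half-spaces generically overlap, and nothing in the construction prevents them from sharing positive real roots. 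Second, and more seriously, even disjointness would not suffice. For $\mathrm{Inv}(w_-)$ and $\mathrm{Inv}(w_+)$ to sit simultaneously below and above $\delta$ in a single one-row order, one needs $\mathrm{span}_{\mathbb{R}_{\geq 0}}(\mathrm{Inv}(w_-)\cup\{\delta\})$ and $\mathrm{span}_{\mathbb{R}_{\geq 0}}(\mathrm{Inv}(w_+))$ to meet only at the origin, a strictly stronger separation condition. Convexity of $\prec$ gives exactly this for $T_-$ and $T_+$, but your enlargement to $\mathrm{Inv}(w_\pm)$ by two independently chosen separating functionals does not preserve it. Consequently the ``simultaneous completion'' you invoke is not a routine citation to \cite{Ito}: it presupposes precisely the separation that has not been arranged, so the sketch relocates the content of the lemma rather than proving it. The paper's hyperplane-sweep avoids all of this by constructing the separating data coherently, as a single interpolated family, before any roots outside $\{\beta_1,\dots,\beta_N\}$ are ever ordered.
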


\begin{proof} 
Define a family of hyperplanes $H_j$ for $ 0 \leq j \leq N$ through the origin as follows:
\begin{itemize}
\item $H_0$ is any hyperplane such that  all roots $\Delta_+^{min}$ are on the same side. 

\item For $0<j<N$, $H_j$ separates the convex hull of $\beta_j$ along with all roots in $\Delta_+^{min}$ on the same side of $H_{j-1}$ as $\beta_{j-1}$ from the convex hull of $\{ \beta_{j+1}, \ldots, \beta_N\}$. 

\item $H_N$ also has all roots on the same side. 
\end{itemize}
This is possible because the order on the $\beta_j$ is convex. Choose an Euclidean inner product $(,)$ on $\mathfrak{h}^*$. For each $j$ with $0 \leq j < N$, let $v_j$ be the unit normal vector to $H_j$ satisfying $ (v_j, \beta_r) > 0$ for $r > j$. For
$0 \leq t \leq N$, 
defined $H_t$ to be the hyperplane orthogonal to $v_t = (1+j-t) v_j+  (t-j) v_{j+1}$, where $j \leq t \leq j+1$. 
Possibly perturbing the $v_j$, we can assume each $H_t$ contains at most one minimal root. Certainly, each root is on only one $H_t$. Let $\prec'$ be the order on $\Delta_+^{min}$ given by the $t$ associated to the root as above. This is convex because every initial and final segment is given by the roots on one side of some hyperplane, and certainly restricts to $\prec$.  

Any real root $\beta$ occurs on some $H_t$, and $\delta$ is not on this hyperplane, so, by Lemma \ref{lem:near-end}, $\beta$ is finitely far from one of the end of the convex order. Hence $\prec'$ is one-row. 
\end{proof}

Fix a convex order $\prec$. If $\g\neq A^{(2)}_{2n}$, then there is a unique $ \bar w \in \bar W$ such that $\alpha \succ \delta$ if and only if $\bar \alpha \in \bar w \bar \Delta $. If  $\g=A^{(2)}_{2n}$, then $\alpha \succ \delta$ if and only if $\bar \alpha \in \bar  w \bar \Delta$ or $2 \bar \alpha \in \bar w \bar \Delta$. See \cite[Section 3.4]{TW} for further discussion.  

\begin{Definition} 
The {\bf coarse type} of $\prec$ is the $\bar w$ from above. We call the coarse type corresponding to the identity element $\bar e$ the {\bf standard coarse type}.
\end{Definition}

\subsection{Crystal basis and canonical basis} 
Recall that  $\mathcal{A}$ is the ring of rational functions in the variable $q_s$ that are regular at $q_s=\infty$. Kashiwara defines a certain $\mathcal{A}$ submodule $\cL = \mathcal{L}(-\infty)$ of $U_q^+(\mathfrak{g})$ called the crystal lattice, and a basis $B(-\infty)$ of $\mathcal{L}/q_s^{-1} \mathcal{L}$, 
 called the crystal basis. He also defines crystal operators $\tilde f_i$, which are partial permutations of $B(-\infty)$. 

\begin{Theorem} \label{th:defofB}
There is a unique basis $\B$ for $U_q^{+}(\mathfrak{g})_{{\Bbb Z}[q_s,q_s^{-1}]}$ such that
\begin{enumerate}

\item $\text{span}_\mathcal{A}(\B)$ is the crystal lattice $\mathcal{L}(-\infty)$. 

\item $\B+q_s^{-1} \mathcal{L}(-\infty)= B(-\infty).$

\item Every element of $\B$ is bar invariant. 

\end{enumerate}
\end{Theorem}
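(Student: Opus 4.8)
The plan is to prove the existence and uniqueness of the basis $\B$ by the standard Kazhdan–Lusztig / Lusztig-type argument that produces canonical bases: one shows that the crystal lattice $\cL(-\infty)$ has an $\mathscr A$-basis that is fixed by the bar-involution and reduces to $B(-\infty)$ modulo $q_s^{-1}\cL$. In fact this basis $\B$ is nothing but Lusztig's canonical basis $\cB$ already named in the background section, so the content of the theorem is really the characterization of $\cB$ by the three listed properties. I would therefore first recall (citing Kashiwara \cite{Kashiwara:1995} and Lusztig \cite{LusCanonical}) that the bar-involution on $\Uplus$ preserves the integral form $U_q^+(\mathfrak g)_{\ZZ[q_s,q_s^{-1}]}$, that it preserves $\cL(-\infty)$, and that it acts as the identity on $\cL(-\infty)/q_s^{-1}\cL(-\infty) = B(-\infty)$.

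For \emph{existence}, I would invoke the abstract lemma (Lusztig's Lemma, e.g. as in \cite[\S 24.2]{LusCanonical}): given a free $\mathscr A$-module $L$ with a distinguished basis modulo $q_s^{-1}L$ and a bar-type involution that is semilinear over $\mathscr A$, fixes a $\ZZ[q_s,q_s^{-1}]$-lattice, and is trivial modulo $q_s^{-1}$, there is a unique bar-invariant lift of the mod-$q_s^{-1}$ basis lying in the $\ZZ[q_s,q_s^{-1}]$-lattice. Applying this to $L = \cL(-\infty)$, the distinguished mod-$q_s^{-1}$ basis $B(-\infty)$, the bar-involution, and the lattice $U_q^+(\mathfrak g)_{\ZZ[q_s,q_s^{-1}]}$ produces $\B$ and simultaneously gives properties (1), (2), (3). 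For \emph{uniqueness}, suppose $\B'$ is another basis satisfying (1)–(3). Then for $b \in B(-\infty)$ the two lifts $\beta, \beta' \in \B \cup (-\B), \B' \cup (-\B')$ (matched so that both reduce to $b$) satisfy $\beta - \beta' \in q_s^{-1}\cL(-\infty)$, both are bar-invariant, and both lie in the $\ZZ[q_s,q_s^{-1}]$-lattice; the standard triangularity argument (expand $\beta - \beta'$ in the basis $\B$ with coefficients in $q_s^{-1}\ZZ[q_s,q_s^{-1}]$, apply bar, and compare) forces $\beta = \beta'$, so $\B = \B'$ up to signs — and the sign is pinned down by requiring the reduction to be exactly $b$ rather than $-b$.

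The main obstacle, and the only place where affine type requires care beyond the finite-type template, is checking the hypotheses of Lusztig's Lemma in this generality: namely that the bar-involution genuinely preserves the $\ZZ[q_s,q_s^{-1}]$-integral form $\Uplus_{\mathscr A}$ (which the excerpt has just defined via divided powers $E_i^{(n)}$) and interacts correctly with $\cL(-\infty)$ over the larger coefficient ring $\mathscr A \subset {\Bbb C}(q)$ with the variable $q_s = q^{1/d}$, including the twisted case $A^{(2)}_{2n}$ where the normalizations in \cite{BN} differ. I would handle this by reducing to Kashiwara's and Lusztig's original results: the compatibility of bar with divided powers is a one-line check from $\overline{E_i} = E_i$ and $\overline{[n]_{q_i}!} = [n]_{q_i}!$ (since $q_i$ is a power of $q_s$ and the Gaussian integers are bar-symmetric up to sign, cancelled in the quotient $[n]!$), and the lattice/crystal statements are exactly \cite[Theorem 11.10 or §7]{Kashiwara:1995} transported through the normalization conventions of \cite[§2.2]{BN}.

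Finally I would remark that this $\B$ coincides with $\cB$ of the background section, so all later references to the canonical basis and its crystal structure $(\e_i, \f_i)$ are consistent; no independent construction is needed, and the theorem's role is simply to record the characterization we will use.
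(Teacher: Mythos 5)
The paper offers no proof of Theorem \ref{th:defofB}: it is stated as background and is exactly Kashiwara's existence/uniqueness theorem for the global crystal basis (equivalently Lusztig's canonical basis), whose proof in Kac--Moody type is the ``grand loop'' induction (or Lusztig's geometric construction). So the real question is whether your sketch is a correct self-contained argument, and it is not: the existence half has a genuine gap. You assert that the bar-involution ``preserves $\cL(-\infty)$ and acts as the identity on $\cL(-\infty)/q_s^{-1}\cL(-\infty)$.'' This is false. Bar is semilinear for $q_s\mapsto q_s^{-1}$, so it carries the $\cA$-lattice $\cL(-\infty)$ (where $\cA$ consists of functions regular at $q_s=\infty$) to an $\overline{\cA}$-lattice; for instance $q_s b\in\overline{\cL}\setminus\cL$ for any $b\in\B$. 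The statement that the three lattices $\cL$, $\overline{\cL}$, $\Uplusintegral$ form a balanced triple (i.e.\ that $\Uplusintegral\cap\cL\cap\overline{\cL}\to\cL/q_s^{-1}\cL$ is an isomorphism) is essentially \emph{equivalent} to the existence assertion of the theorem, so invoking it as a known compatibility is circular. Moreover, even granting some lattice compatibility, Lusztig's Lemma requires as input a partially ordered basis of the integral form on which bar acts unitriangularly with coefficients in $\ZZ[q_s,q_s^{-1}]$; the bare data ``lattice, basis mod $q_s^{-1}$, involution trivial mod $q_s^{-1}$'' is not a hypothesis under which the lemma produces anything. In this paper the needed triangular input is exactly Proposition \ref{prop:bbar} for the PBW bases together with Proposition \ref{prop-crystal-basis} (that $L(\cdot,\prec)$ spans $\cL$ and reduces to $B(-\infty)$), and that is how Theorem \ref{th:UpperTriangularity} manufactures a basis $\B'$ satisfying (i)--(iii); but those inputs are established only later and rest on nontrivial results of Beck--Nakajima, so they cannot be used to ``prove'' Theorem \ref{th:defofB} at the point where it is stated.

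Your uniqueness argument, by contrast, is fine and is the standard one: if $\beta,\beta'$ are two lifts of the same $b$, then $\beta-\beta'$ is bar-invariant and lies in $\Uplusintegral\cap q_s^{-1}\cL$; expanding in $\B$ gives coefficients in $q_s^{-1}\ZZ[q_s^{-1}]\cap\ZZ[q_s,q_s^{-1}]$ that are fixed by $q_s\mapsto q_s^{-1}$, hence zero. (Your remark about signs is unnecessary here since condition (ii) already fixes the reduction to be $b$ rather than $-b$.) The compatibility of bar with divided powers is also correctly disposed of. But as written the existence part should simply be a citation to Kashiwara, or else be deferred until the PBW triangularity machinery is in place.
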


\begin{Definition}
The basis $\B$ from Theorem \ref{th:defofB} is called Kashiwara's global crystal basis or Lusztig's canonical basis.
\end{Definition}

\subsection{The combinatorial crystal}

\begin{Definition}\label{def:crystal} (see \cite[Section 7.2]{Kashiwara:1995}) A {\bf combinatorial crystal} is a set $B$ along with functions $\wt \colon B \to P$ (where $P$ is the weight
  lattice), and, for each $i \in I$, $\varepsilon_i, \varphi_i \colon B \to {\mathbb Z}$ and $\e_i, \f_i: B \rightarrow B \sqcup \{ \emptyset \}$, such that
  \begin{enumerate}
  \item $\varphi_i(b) = \varepsilon_i(b) + \langle \wt(b), \alpha_i^\vee \rangle$.
  \item $\e_i$ increases $\varphi_i$ by 1, decreases $\eps_i$ by 1 and increases $\wt$ by $\alpha_i$.
  \item $\f_i b = b'$ if and only if $\e_i b' = b$.
  \end{enumerate}
We often denote a combinatorial crystal simply by $B$, suppressing the other data.
\end{Definition}

\begin{Remark}
Sometimes $\varphi_i,\varepsilon_i$ are allowed to be $-\infty$, but we do not need that case.
\end{Remark}

\begin{Definition}
\label{def:lwcrystal}
A combinatorial crystal  is called {\bf lowest weight} if it has a distinguished element $b_-$ (the lowest weight element) such that 
\begin{enumerate}
\item $b_-$ can be reached from any $b \in B$ by applying a sequence of $\f_i$  for various $i \in I$. 
\item For all $b \in B$ and all $i \in I$, $\varphi_i(b) = \max \{ n : \f_i^n(b) \neq \emptyset \}$.
\end{enumerate}
  \end{Definition}
\noindent For a lowest weight combinatorial crystal, $\varphi_i, \varepsilon_i$ and $ \wt$ are determined by the
$\f_i$ and $\wt(b_-)$.

The following is essentially the characterization of $B(-\infty)$ due to Kashiwara and Saito \cite{KS:1997}, but has been modified to make the ordinary and $*$ crystal operators play more symmetric roles. See \cite{TW} for this exact statement.  

\begin{Proposition} \label{prop:comb-char}
Fix a set $B$ along with two sets of operators $\{\tilde e_i, \tilde f_i\}$ and $\{ \tilde e_i^*, \tilde f_i^* \}$  such that  $(B, \e_i, \f_i)$ and $(B, \e_i^*, \f_i^*)$ are both lowest weight combinatorial crystals with the same lowest weight element $b_-$, where the other data is determined by $\wt(b_-)=0$. Assume that, for all $i \neq j \in I$ and all $b \in B$,
\begin{enumerate}

\item \label{ccc0} $\e_i(b), \e_i^*(b) \neq 0$.

\item \label{ccc1} $\e_i^*\e_j(b)= \e_j\e_i^*( b)$.

\item \label{ccc2} $\varphi_i(b)+\varphi_i^*(b)- \langle \wt(b),
  \alpha_i^\vee \rangle\geq0$.

\item \label{ccc3} If $\varphi_i(b)+\varphi_i^*(b)- \langle \wt(b), \alpha_i^\vee \rangle =0$ then $\e_i(b) = \e_i^*(b)$.

\item \label{ccc4} If $\varphi_i(b)+\varphi_i^*(b)- \langle \wt(b), \alpha_i^\vee \rangle \geq 1$ then $\varphi_i^*(\e_i(b))= \varphi_i^*(b)$ and  $\varphi_i(e^*_i(b))= \varphi_i(b)$.

\item \label{ccc5} If $\varphi_i(b)+\varphi_i^*(b)- \langle \wt(b), \alpha_i^\vee \rangle \geq 2$ then  $\e_i \e_i^*(b) = \e_i^*\e_i(b)$.

\end{enumerate}
Then $(B, \e_i, \f_i) \simeq (B, \e_i^*, \f_i^*) \simeq B(-\infty)$, and
$\e_i^*= *\e_i *, \f_i^*=*\f_i*$, where $*$ is Kashiwara's
involution. \qed
\end{Proposition}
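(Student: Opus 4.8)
Proposition~\ref{prop:comb-char} is quoted from \cite{TW}, where it is obtained from the Kashiwara--Saito characterization of $B(-\infty)$ in \cite{KS:1997}; so the plan is to reduce it to that theorem. I would first recall that the Kashiwara--Saito characterization takes as input a set $B$ with a distinguished element, two families of crystal operators, and a short list of axioms governing the interaction of $\e_i$ with $\e_i^*$, and produces both the isomorphism with $B(-\infty)$ and the identities $\e_i^* = *\e_i*$, $\f_i^* = *\f_i*$. The remaining task is then to verify that hypotheses (i)--(vi), together with the lowest-weight condition on $(B,\e_i,\f_i)$ and the normalization $\wt(b_-)=0$, imply the Kashiwara--Saito hypotheses.

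Most of these transcribe directly: non-vanishing, the mixed commutativity $\e_i^*\e_j = \e_j\e_i^*$ for $i \neq j$, and the pinning-down of the generator by the lowest-weight axioms. The substance is the analysis of the defect
\[
c_i(b) := \varphi_i(b) + \varphi_i^*(b) - \langle \wt(b),\alpha_i^\vee\rangle ,
\]
which (iii)--(vi) control in the three regimes $c_i(b)=0$, $c_i(b)=1$, and $c_i(b)\geq 2$. From the crystal axioms in Definition~\ref{def:crystal} one has $\varphi_i(\e_i b) = \varphi_i(b)+1$ and $\langle\wt(\e_i b),\alpha_i^\vee\rangle = \langle\wt(b),\alpha_i^\vee\rangle + 2$, so when $c_i(b)\geq 1$ axiom (v) yields $c_i(\e_i b) = c_i(b)-1 = c_i(\e_i^* b)$, whereas when $c_i(b)=0$ axiom (iv) gives $\e_i b = \e_i^* b$ and the analogous computation keeps $c_i$ equal to $0$. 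Combined with the commutation (vi) in the regime $c_i(b)\geq 2$, this forces the combined action of $\e_i,\e_i^*$ near $b$ to have precisely the ``two overlapping $i$-strings glued along a block of length $c_i(b)$'' shape that occurs in $B(-\infty)$; this local $\mathfrak{sl}_2\times\mathfrak{sl}_2$-type picture at each $i$, together with the mixed commutativity for $i\neq j$, is exactly the input needed for the Kashiwara--Saito theorem, which then delivers $(B,\e_i,\f_i)\simeq B(-\infty)$ and the $*$-compatibility. Running the same argument with the roles of $\e_i$ and $\e_i^*$ exchanged -- after checking that the axiom list (i)--(vi) is invariant under that exchange -- gives $(B,\e_i^*,\f_i^*)\simeq B(-\infty)$ as well.

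I expect the main obstacle to be bookkeeping rather than ideas: confirming that the local two-string structure is genuinely forced by (i)--(vi), and in particular that no additional relation is needed at the boundary case $c_i(b)=1$ where $\e_i$ and $\e_i^*$ first diverge; and translating the Kashiwara--Saito statement, which is usually phrased for the crystal of $U_q^-$ in which $\e_i$ may vanish, into the present conventions, where instead $\f_i$ may vanish and $b_-$ is the $\f_i$-sink with $\wt(b_-)=0$. This translation is the one genuinely convention-sensitive step and should be done carefully, but it introduces nothing essentially new.
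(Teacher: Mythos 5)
Your proposal is correct and follows essentially the same route as the paper, which offers no independent argument but simply quotes the statement from \cite{TW} as a symmetrized form of the Kashiwara--Saito characterization \cite{KS:1997}; your reduction to that characterization, via the bookkeeping of the defect $\varphi_i(b)+\varphi_i^*(b)-\langle \wt(b),\alpha_i^\vee\rangle$ and the convention translation, is exactly the intended proof.
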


We can understand Proposition \ref{prop:comb-char} as follows. 
For any $i \in I$, and any $b \in B(-\infty)$, the subset of $B(-\infty)$ generated by the operators $\e_i, \f_i, \e_i^*, \f_i^*$ is of the form:
\vspace{0.15cm}

\begin{equation*} \label{ii*-pic}
\setlength{\unitlength}{0.15cm}
\begin{tikzpicture}[xscale=0.45,yscale=-0.45, line width = 0.03cm]

\draw node at (10,5) {$\bullet$};

\draw node at (8,4) {$\bullet$};
\draw node at (12,4) {$\bullet$};
\draw node at (6,3) {$\bullet$};
\draw node at (10,3) {$\bullet$};
\draw node at (14,3) {$\bullet$};

\draw node at (4,2) {$\bullet$};
\draw node at (8,2) {$\bullet$};
\draw node at (12,2) {$\bullet$};
\draw node at (16,2) {$\bullet$};

\draw node at (2,1) {$\bullet$};
\draw node at (6,1) {$\bullet$};
\draw node at (10,1) {$\bullet$};
\draw node at (14,1) {$\bullet$};
\draw node at (18,1) {$\bullet$};

\draw node at (2,-1) {$\bullet$};
\draw node at (6,-1) {$\bullet$};
\draw node at (10,-1) {$\bullet$};
\draw node at (14,-1) {$\bullet$};
\draw node at (18,-1) {$\bullet$};

\draw [->, dotted] (10,5)--(8.2,4.1); 
\draw [->, dotted] (8,4)--(6.2,3.1); 
\draw [->, dotted] (12,4)--(10.2,3.1); 
\draw [->, dotted] (6,3)--(4.2,2.1); 
\draw [->, dotted] (10,3)--(8.2,2.1); 
\draw [->, dotted] (14,3)--(12.2,2.1); 

\draw [->, dotted] (4,2)--(2.2,1.1); 
\draw [->, dotted] (8,2)--(6.2,1.1); 
\draw [->, dotted] (12,2)--(10.2,1.1); 
\draw [->, dotted] (16,2)--(14.2,1.1); 
\draw [->] (10,5)--(11.8,4.1); 

\draw [->] (8,4)--(9.8,3.1); 
\draw [->] (12,4)--(13.8,3.1); 
\draw [->] (6,3)--(7.8,2.1); 
\draw [->] (10,3)--(11.8,2.1); 
\draw [->] (14,3)--(15.8,2.1); 
\draw [->] (4,2)--(5.8,1.1); 
\draw [->] (8,2)--(9.8,1.1); 
\draw [->] (12,2)--(13.8,1.1); 
\draw [->] (16,2)--(17.8,1.1); 

\draw [->, dashed] (2,1) --(2,-0.7);
\draw [->, dashed] (6,1) --(6,-0.7);
\draw [->, dashed] (10,1) --(10,-0.7);
\draw [->, dashed] (14,1) --(14,-0.7);
\draw [->, dashed] (18,1) --(18,-0.7);
\draw [->, dashed] (2,-1) --(2,-2.7);
\draw [->, dashed] (6,-1) --(6,-2.7); 
\draw [->, dashed] (10,-1) --(10,-2.7);
\draw [->, dashed] (14,-1) --(14,-2.7);
\draw [->, dashed] (18,-1) --(18,-2.7);

\end{tikzpicture}
\end{equation*}

\noindent where the solid or dashed arrows show the action of
$\e_i$, and the dotted or dashed arrows show the action of $\e_i^*$.  Here the width of the diagram at the top is $-\langle \wt(b_v), \alpha_i^\vee \rangle$, where $b_v$ is the bottom vertex (in the example above the width is 4).

\subsection{The braid group action}

For $i \in I$ we consider Lustzig's braid group operator $T_i$ (denoted $T^{''}_{i,1}$ in \cite[Section 37]{Lus}) which is the algebra automorphism of $U_q(\g)$ defined by 
\begin{align*}
& T_i ( E_i) =  -F_i q_i^{h_i}\\
& T_i ( F_i) =  -q_i^{-h_i} E_i \\
& T_i ( E_j) = \sum_{r+s = - a_{i,j}} (-1)^r q_i^{-r} E_i^{(s)} E_j E_i^{(r)}  \text{ for } i \neq j \\ 
& T_i ( F_j) = \sum_{r+s = - a_{i,j}} (-1)^r q_i^{r} F_i^{(r)} F_j F_i^{(s)}  \text{ for } i \neq j \\ 
& T_i( q^{h}) = q^{s_i(h)}
\end{align*}
These satisfy the braid relations so, for $w \in W$, we can unambiguously define
\begin{align}
  \label{eq:2}
T_w= T_{i_k} \cdots T_{i_1}
\end{align}
for any reduced expression $w=s_{i_k} \cdots s_{i_1}$.

\begin{Proposition}{\cite[Proposition 3.4.7]{Sai}} \label{prop:Saito}
Suppose $P \in \mathcal{L}$ specializes to a crystal basis element $b$ modulo $q^{-1}$, and that $T_i(P) \in \U^+$. Then $T_i(P) \in \mathcal{L}$ and specializes to the crystal basis element $\sigma_i(b)$ modulo $q^{-1}$, where 
\begin{align*}
\sigma_i(b) =   ( \e_i)^{\epsilon^*_i(b)} (\f_i^*)^{\varphi^*_i(b)} b.
\end{align*}
\end{Proposition}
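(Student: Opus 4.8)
# Proof Proposal for Proposition~\ref{prop:Saito}

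The plan is to recognize this as Saito's reflection functor result (Proposition 3.4.7 in \cite{Sai}), and to explain how the argument runs in the present conventions rather than to reprove it from scratch; since the statement is quoted from the literature, the role of a proof here is to indicate why it holds and what must be checked to see that it survives the passage to the affine (and twisted) setting with the normalizations fixed in the Background section.

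First I would recall the key structural input: for $i \in I$, Lusztig's operator $T_i$ restricts to an isomorphism between two subalgebras of $\Uplus$, namely $\,{}_i\Uplus = \{x \in \Uplus : T_i(x) \in \Uplus\}$ and its image, and on the level of the braid group action this is governed by the rank-one $\mathfrak{sl}_2$ (or, in the $r_i = 2$ case, the relevant rank-one subalgebra) attached to the node $i$. The crystal-theoretic content is that the operator $\sigma_i$ defined by $\sigma_i(b) = (\e_i)^{\varepsilon_i^*(b)}(\f_i^*)^{\varphi_i^*(b)}b$ is precisely the combinatorial shadow of $T_i$ on $B(-\infty)$: it takes the ``$i$-string data'' of $b$ (recorded by the $\e_i, \f_i$ and $\e_i^*, \f_i^*$ operators, whose joint action has the diamond shape drawn just above the statement) and transports it through the reflection at $i$. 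Concretely, the condition $T_i(P) \in \Uplus$ forces $P$ to lie in ${}_i\Uplus$, which at the crystal level means $\varepsilon_i^*(b) = 0$ is \emph{not} required but rather that $b$ sits at the appropriate position on its $(\e_i, \e_i^*)$-diamond so that the monomial $(\e_i)^{\varepsilon_i^*(b)}(\f_i^*)^{\varphi_i^*(b)}$ makes sense and lands back in $B(-\infty)$.

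The main steps, following Saito, would be: (1) verify the claim when $P = E_i^{(n)}$ or more generally inside the rank-one subalgebra generated by $E_i$ --- here everything is an explicit computation in quantum $\mathfrak{sl}_2$, and $T_i$ sends $E_i^{(n)}$ essentially to a power of $F_i$, matching the formula for $\sigma_i$ on the corresponding crystal elements; (2) use the bimodule/triangular decomposition $\Uplus = \bigoplus_n E_i^{(n)} \cdot ({}_i\Uplus \cap \ker \text{ad-like condition})$ compatibly with the crystal lattice $\mathcal{L}$, to reduce a general $P$ to the rank-one case tensored with an element on which $T_i$ acts more simply; (3) check that $T_i$ preserves $\mathcal{L}$ on the relevant piece and that the induced map mod $q^{-1}$ is exactly $\sigma_i$, using that $T_i$ is compatible with the bar-involution up to a controlled sign/power of $q$ (which disappears mod $q^{-1}$) and that $\mathcal{B}$ is bar-invariant. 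The compatibility of $\sigma_i$ with the crystal operators and with Kashiwara's involution $*$ is then read off from the explicit diamond picture.

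The main obstacle is step (3) combined with the bookkeeping needed to ensure Saito's original argument --- written for untwisted simply-laced or symmetrizable Kac-Moody algebras with one set of conventions --- goes through verbatim with the normalizations $q_i = q^{(\alpha_i,\alpha_i)/2}$, $q_s = q^{1/d}$, and the parameters $d_i, r_i$ fixed in the Background section, in particular at the node $n$ of type $A^{(2)}_{2n}$ where $r_i = 2$. I would argue that the rank-one computations underlying step (1) are insensitive to these global choices (they only involve the single $q_i$), that the triangular decomposition in step (2) is a general fact about $\Uplus$ independent of type, and hence that no new phenomenon arises; the verification that $T_i(P) \in \mathcal{L}$ and specializes correctly is then a direct transcription of \cite[Proposition 3.4.7]{Sai}. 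In the write-up I would simply cite Saito for the bulk of this and remark only on the points where the affine/twisted conventions need a word of justification.
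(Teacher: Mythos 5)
The paper gives no proof of this Proposition at all: it is quoted verbatim from Saito \cite[Proposition~3.4.7]{Sai}, with the only local contribution being the subsequent Remark correcting the misstatement in \cite[Theorem~4.13]{MT}. Your proposal --- cite Saito for the substance and sanity-check that his argument is insensitive to the affine/twisted normalizations $q_i, q_s, d_i, r_i$ fixed in the Background --- is therefore exactly the approach the paper takes, and your sketch of Saito's reduction to the rank-one subalgebra via the decomposition $\Uplus = \bigoplus_n E_i^{(n)} \cdot {}_i\Uplus$ is the standard route.
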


\begin{Remark}
Proposition \ref{prop:Saito} is incorrectly stated in \cite[Theorem 4.13]{MT}. Relatedly, the proof of \cite[Corollary 4.14]{MT} is incorrect. This can be fixed by noticing that the correct form of Proposition \ref{prop:Saito} immediately gives, in that paper's notation,
\begin{align}
  \label{eq:55}
 L( \cc \circ s_i, i)  \equiv  \sigma_i L( \cc, i-1) \mod \cL/q^{-1} \cL,
\end{align}
from which \cite[Corollary 4.14]{MT} is immediate.
\end{Remark}

\section{Characterization of affine MV polytopes}

\subsection{Lusztig data} The definitions here are based on \cite{TW}. 

\begin{Definition}
A {\bf multipartition} $\underline\lambda = (\lambda^{(1)}, \cdots, \lambda^{(n)})$ is a collection of $n$ partitions indexed by $\bar I$.
The {\bf weight} of $\underline \lambda$ is
  \begin{align}
    \label{eq:39}
 \wt(\underline \lambda) =   \sum_{i\in \bar I} d_i |\lambda^{(i)}|.
  \end{align}
\end{Definition}

\begin{Definition}
  A {\bf Lusztig datum} $\cc$ is a collection of non-negative integers $\{ \cc_{\beta} \}$ indexed by positive real roots $\beta$ along with a multipartition $\cc_\delta$ such that $\cc_{\beta} = 0$ for almost all real roots $\beta$. The weight of the Lusztig datum $\cc$ is given by:
  \begin{align}
    \label{eq:51}
 \wt( \cc) = \sum_{\beta} \cc_{\beta} \cdot \beta + \wt(\cc_{\delta}) \cdot \delta  
  \end{align}
\end{Definition}

If $\cc$ is a Lusztig datum such that $\cc_{\beta} = 0$ for all real roots, then we say that $\cc$ is {\bf purely imaginary}. In this case, let $\underline \lambda  = \cc_{\delta}$. We will often abuse notation and write  $\cc = \underline \lambda$, i.e. we will just write multipartitions to mean the corresponding purely imaginary Lusztig data.

\begin{Definition}
Fix a convex order $\prec$. A root $\alpha$ is called {\bf accessible} if it is finitely far from one end of the order. 
\end{Definition}

\begin{Remark}
For one row orders every real root is accessible.  
\end{Remark}

\begin{Definition}\label{ctLusztig}
Fix a convex order $\prec$ and an accessible real root $\alpha$. For each $b \in B(-\infty)$, define an integer $\coa_{\alpha}^\prec(b)$ by setting $\coa^\prec_{\alpha_i}(b)=\varphi_i(b)$ if $\alpha_i$ is minimal for $\prec$,
  and \[\coa^\prec_{\alpha}(b)=\coa_{s_i\alpha}^{\prec^{s_i}}(\sigma_i(f_i^{\varphi_i(b)}b))\]
  for all other roots finitely far from the beginning of the order.
Similarly, define 
  $\coa^{\prec}_{\alpha_i}(b)=\varphi^*_i(b)$ if $\alpha_i$ is maximal for $\prec$,
  and for other roots finitely far from the end of the order, define
  \[\coa^{\prec}_{\alpha}(b)=\coa_{s_i\alpha}^{\prec^{s_i}}(\sigma_i^*((f^*_i)^{\varphi^*_i(b)}b)).\]
We call $\{ \coa^\prec_\alpha(b) \}$ the {\bf crystal-theoretic real Lusztig data} for $b$ with respect to $\prec$.
\end{Definition}

\begin{Remark}
We could actually define crystal theoretic Lusztig data for any convex order. Given an order $\prec$ that is not one-row, choose a one row order $\prec'$ that orders all roots of height less than or equal to $\langle \wt(b), \rho \rangle$ in the same way as $\prec$ (see Lemma \ref{lem:eo}). Then the crystal theoretic Lusztig data for $\prec$ is defined to be the crystal theoretic Lusztig data for $\prec'$. 
The fact that this is well-defined will follow from the fact that crystal-theoretic Lusztig data agrees with Lusztig data of PBW basis vectors (Proposition \ref{prop:cCS}) and the fact that the Lusztig data of PBW basis vectors behave well when approximating arbitrary orders by one-row orders (Corollary \ref{cor:onS+}).
\end{Remark}

\subsection{Affine MV polytopes}

An affine pseudo-Weyl polytope $P$ is a polytope in $\fh^*$ (considered up to translation) such that every edge is an integer multiple of a root (after translating one end of the edge to the origin).

Each point $\mu$ in the weight space of $\overline\fg$ defines a linear functional on $\fh^*$, and hence defines a face $P^\mu$ of $P$ by taking the points where this functional achieves its minimum, and this face is always parallel to the imaginary root $\delta$. If $\mu$ is in the interior of some Weyl chamber $C$, then this will be a line $P^C$ parallel to $\delta$, and that line will not depend on the precise point chosen. If $\mu$ is a chamber weight $\bar w \bar \omega_i$ for some $\bar w \in \overline{W}$, then $P^\mu$ will usually be a face of codimension 1. 

\begin{Definition} \label{def:edge-decorated}
An {\bf edge-decorated affine pseudo-Weyl polytope} 
is an affine pseudo-Weyl polytope $P$ along with a choice of multipartition $\underline\lambda  = (\lambda^{(1)}, \cdots, \lambda^{(n)})$ for each $\bar w \in \bar W$ such that the length of the edge $P^{\bar w C_+}$ is equal to $\wt(\underline \lambda)$, where $C_+$ is the dominant chamber. 

\end{Definition}

\begin{Definition}
A {\bf facet-decorated affine pseudo-Weyl polytope} is an affine pseudo-Weyl polytope along with a choice of a partition $\pi^\gamma$ for each chamber weight $\gamma$ of $\overline\fg$, which satisfies the condition that, for each $w \in W$, the length of the edge $P^{w C_+}$ is equal to $\sum_{i \in \bar I} d_{i} |\pi^{w \omega_i}|$. 

\end{Definition}

A facet-decorated affine pseudo-Weyl polytope gives rise to an edge-decorated affine pseudo-Weyl polytope where, for each  $\overline w \in \bar W$, we define the associated multipartition $\underline\lambda  = (\lambda^{(1)}, \cdots, \lambda^{(n)})$ by $\lambda^{(i)} = \pi^{\overline w \bar \omega_i}$. We say that an edge-decorated affine pseudo-Weyl polytope is induced from a face-decorated polytope if it arises in this way. 

\begin{Definition}
For a pseudo-Weyl polytope $P$, let $\mu_0(P)$ be the vertex of $P$ such that $\langle \mu_0(P), \rho^\vee \rangle$ is lowest, and $\mu^0(P)$ the vertex where this is highest (these are vertices as for all roots $\langle \alpha, \rho^\vee \rangle \neq 0$). 
\end{Definition}

\begin{Lemma} \label{lem:ispath} \cite[Lemma 1.20]{TW}
Fix a pseudo-Weyl polytope $P$ and a convex order $\prec$ on $\Delta_+^{min}$. There is a unique path $P^\prec$ through the 1-skeleton of $P$ from $\mu_0(P)$ to $\mu^0(P)$ which passes through at most one edge parallel to each root, and these appear in decreasing order according to $\prec$ as one travels from $\mu_0(P)$ to $\mu^0(P)$.
\end{Lemma}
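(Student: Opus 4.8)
The plan is to prove existence and uniqueness by induction on the number of roots, reducing the infinite combinatorial problem to the finitely many roots that are relevant for a given polytope. The key observation is that, although $\Delta_+^{min}$ is infinite, a pseudo-Weyl polytope $P$ has only finitely many edges, hence only finitely many root-directions $\beta$ occur as edges of $P$; call this finite set $S \subset \Delta_+^{min}$. First I would show that the desired path can only traverse edges in directions lying in $S$, so that the relevant structure is the restriction $\prec|_S$ of the convex order to $S$, together with the requirement (for roots $\beta \notin S$ that nonetheless ``want'' to appear) that the corresponding edge has length $0$. Using Lemma \ref{lem:eo}, the restriction $\prec|_S$ extends to a one-row order, but in fact for the uniqueness/existence argument all we need is that $\prec|_S$ is itself a convex order on the finite vector configuration $S$, which is immediate from Definition \ref{def:convex}.

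Next I would run the induction: let $\beta$ be the $\prec$-minimal element of $S$ (using Lemma \ref{lem:near-end} and accessibility, or just finiteness of $S$, to see a minimum exists). Consider the face $P^\mu$ of $P$ obtained by minimizing a functional $\mu$ that is large in the direction making $\beta$ small; since $\prec|_S$ is convex, $\beta$ spans an extremal ray of the cone generated by $S$, so there is a vertex of $P$ — namely $\mu_0(P)$, the vertex minimizing $\langle\cdot,\rho^\vee\rangle$, after checking this is the right vertex — at which the unique edge of $P$ in direction $\beta$ (if it occurs) emanates ``first''. The path $P^\prec$ is then forced to start by traversing this edge (of length equal to the coefficient of $\beta$ in the relevant edge, possibly $0$, in which case we skip it), arriving at a new vertex $\mu_0'$. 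The polytope $P$ with this initial edge removed — or more precisely, the face of $P$ on the far side — is again a pseudo-Weyl polytope with one fewer root-direction available, and $\prec|_{S\setminus\{\beta\}}$ is still convex, so by induction there is a unique completing path from $\mu_0'$ to $\mu^0(P)$ traversing each remaining direction at most once in $\prec$-decreasing order. Uniqueness at each stage follows because the first edge of any valid path is forced: it must be in the $\prec$-minimal available direction and must start at $\mu_0(P)$, and a pseudo-Weyl polytope has at most one edge in each direction from a given vertex along the boundary component we are tracing.

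The main obstacle I anticipate is verifying the geometric claim that makes the induction go through: that at the vertex $\mu_0(P)$ the edge in the $\prec$-minimal direction $\beta$ is the one the path must take, and that ``removing'' it again leaves a pseudo-Weyl polytope whose vertex set still contains a well-defined $\mu_0$ and $\mu^0$. Concretely, one must show that $\mu_0(P)$ is the starting vertex because $\langle\beta,\rho^\vee\rangle>0$ for all $\beta\in\Delta_+$ forces the $\rho^\vee$-minimal vertex to be the source of any monotone-in-$\prec$ path, and that the convexity of $\prec|_S$ guarantees the cone of ``remaining'' directions is still pointed (so $\mu_0'$ exists). One subtlety is that $P$ is only defined up to translation and that faces parallel to $\delta$ are lines rather than points near the imaginary root; I would handle $\delta$ by noting it sits at a single well-defined position in the order and corresponds to at most one edge, treating it exactly like a real root for the purposes of the path. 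I expect the bookkeeping to match \cite[Lemma 1.20]{TW} closely, so once the reduction to finite $S$ and the ``forced first edge'' lemma are in place, the rest is a routine induction.
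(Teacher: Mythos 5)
The paper gives no proof of this lemma; it is imported verbatim from \cite[Lemma 1.20]{TW}, so I can only judge your argument on its own terms. Your existence sketch is in the right spirit, but the induction is not well-founded as set up: neither ``$P$ with the initial edge removed'' nor ``the face of $P$ on the far side'' is a pseudo-Weyl polytope, so the inductive hypothesis cannot be applied to it. The standard way to organize the argument keeps $P$ fixed: for each initial segment $A$ of $\prec$ one uses convexity (as in Lemma \ref{lem:eo}) to produce a functional negative on $A$ and positive on its complement, lets $\mu_A$ be the vertex of $P$ minimizing it, and checks that as $A$ grows by one root $\gamma$ the minimizing vertex moves along an edge parallel to $\gamma$ (possibly of length zero). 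Your separating-functional remark is exactly this for the first step, but you never use it to control the later vertices. (Also note the lemma asks for \emph{decreasing} order from $\mu_0(P)$ to $\mu^0(P)$, so the first traversed edge is the $\prec$-\emph{maximal} direction among those used, not the minimal one.)

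The genuine gap is uniqueness. Your ``forced first edge'' claim uses only the hypothesis that every edge of $P$ is an integer multiple of a root, and at that level of generality it fails. Take $P=\operatorname{conv}\{0,\,\alpha_1,\,\alpha_1+\alpha_2\}$ for two simple roots spanning an $A_2$ subsystem, and a convex order with $\alpha_2\prec\alpha_1+\alpha_2\prec\alpha_1$. Then both $0\to\alpha_1\to\alpha_1+\alpha_2$ (edges parallel to $\alpha_1$, then $\alpha_2$: decreasing) and $0\to\alpha_1+\alpha_2$ (a single edge, vacuously decreasing) run from $\mu_0$ to $\mu^0$, use at most one edge per root, and are $\prec$-decreasing; so the first edge is not determined by being extremal among whichever directions the path happens to use. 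What actually pins down the path is the finer structure of pseudo-Weyl polytopes, which is built into the definition in \cite{TW} but only hinted at in this paper's abbreviated one: each cut of $\prec$ into an initial segment $A$ and its complement selects a well-defined vertex $\mu_A$ (because the normal fan of $P$ is coarsened by the arrangement of root hyperplanes), and one must show that any admissible path visits exactly these vertices. In the example, the cut $A=\{\alpha_1\}$ forces the intermediate vertex $\alpha_1$ and rules out the one-edge path. Without that identification your induction would prove a statement that is false for arbitrary lattice polytopes with root-parallel edges, so this is the missing idea rather than a routine detail.
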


\begin{Definition} \label{def:geom-LD}
Fix an edge-decorated pseudo-Weyl polytope $P$ and a convex order $\prec$. For each positive real root $\beta$, define $\cc^\prec_\beta(P)$ to be the unique non-negative number such that the edge in $P^\prec$ parallel to $\beta$ is a translate of $\cc^\prec_\beta(P) \beta$. We call the collection $\{ \cc^\prec_\beta(P) \}$ the {\bf polytopal real Lusztig data} of $P$ with respect to $\prec$. 
\end{Definition}

\subsection{Characterization}

The following characterization of affine MV polytopes is a straightforward extension of \cite[Theorem 3.11]{MT} and \cite[Proposition 1.24]{TW}.

\begin{Theorem} \label{th:MV-def}
There is a unique map $b \rightarrow P_b$ from $B(-\infty)$ to edge-decorated pseudo-Weyl polytopes such that $P_{b_-}$ is a point and the following conditions are satisfied.  

\begin{enumerate}

\item[(C)] 
\begin{itemize} 

\item If $\alpha_i$ is minimal for $\prec$ and $\f_i(b) \neq 0$, then $\cc^\prec_{\alpha_i}(P_{\f_i(b)})= \cc^\prec_{\alpha_i}(P_{b})-1$, and $\cc^\prec_{\beta}(P_{\f_i(b)})= \cc^\prec_{\beta}(P_{b})$ for all other positive roots $\beta$.

\item If $\alpha_i$ is maximal for $\prec$ and  $\f^*_i(b) \neq 0$, then $\cc^\prec_{\alpha_i}(P_{\f^*_i(b)})= \cc^\prec_{\alpha_i}(P_{b})-1$, and $\cc^\prec_{\beta}(P_{\f^*_i(b)})= \cc^\prec_{\beta}(P_{b})$ for all other positive roots $\beta$.

\end{itemize}

\item[(S)]
\begin{itemize}

\item If $\f_i(b)=0$ and $\alpha_i$ is minimal for $\prec$, then, for all $\alpha\neq \alpha_i$, $\cc^\prec_\alpha(P_b)= \cc^{\prec^{s_i}}_{s_i \alpha}(P_{\sigma_ib})$. 

\item If $\f^*_i(b)=0$ and $\alpha_i$ is maximal for $\prec$, then, for all $\alpha \neq \alpha_i$, $\cc^\prec_\alpha(P_b)= \cc^{\prec^{s_i}}_{s_i \alpha}(P_{\sigma_i^*b})$. 

\end{itemize}

\item[(I)] 
Let $i \in \bar I$, and Let $(\lambda^{(1)}, \ldots, \lambda^{(n)})$ be a multipartition with $\lambda^{(i)} \neq \emptyset$. 
let $\prec$ be a convex order of course type $\overline e$ and $\alpha_i$ is minimal for $\prec$. 
Suppose $b \in B(-\infty)$ satisfies $\cc_\beta^{\preci}(P_b)=0$ for all real roots $\beta$, and suppose $\cc_\delta^{\preci}(P_b)= (\lambda^{(1)}, \ldots, \lambda^{(n)})$. Then we have
\begin{itemize}
\item 
$\cc_{\alpha_i}^{\prec}(P_b)= r_i \cdot \lambda^{(i)}_1 $ and $\cc_{d_i\delta-r_i\alpha_i}^{\prec}(P_b)= \lambda^{(i)}_1$, and for all other real roots $\beta$, $\cc_\beta^{\prec}(P_b)=0$.

\item Write $\cc^{\prec}_\delta(P_b) = (\mu^{(1)}, \ldots, \mu^{(n)})$. Then $\mu^{(i)} = \lambda^{(i)} \backslash  \lambda^{(i)} _1$ (i.e. the largest part is removed) and  $ \mu^{(j)}=\lambda^{(j)}$ for $j \neq i$.   
\end{itemize}
 \noindent Conversely, if $\cc^\prec(P_b)$ is of the form above, then $\cc^\preci(P_b)$  must also be as above.

\end{enumerate}
Furthermore, every $P_b$ in the image is induced from a facet-decorated polytope, which agrees with the MV polytope $MV_b$ from \cite{TW}. 
\end{Theorem}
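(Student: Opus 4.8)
The plan is to prove existence and uniqueness separately, and then verify the comparison with $MV_b$ from \cite{TW}. For \emph{uniqueness}, I would argue exactly as in \cite[Theorem 3.11]{MT}: the data $\{\cc^\prec_\beta(P_b)\}$ for all convex orders $\prec$ determines $P_b$ (since the edge lengths along every path $P^\prec$ determine the polytope, and every edge of a pseudo-Weyl polytope lies on some such path by Lemma \ref{lem:ispath}), so it suffices to show the conditions (C), (S), (I) determine all the numbers $\cc^\prec_\beta(P_b)$ by induction. The induction is on $\langle \wt(b), \rho^\vee\rangle$. Given $b \neq b_-$, pick any convex order $\prec$; if $\alpha_i$ is minimal and $\f_i(b)\neq 0$ for the relevant $i$, (C) reduces $\cc^\prec_\beta(P_b)$ to data for $\f_i(b)$, which has smaller weight. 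If instead $\f_i(b) = 0$ for the minimal root, then (S) relates $\cc^\prec_\bullet(P_b)$ to $\cc^{\prec^{s_i}}_\bullet(P_{\sigma_i b})$; one must check, using Proposition \ref{prop:Saito} and the combinatorics of the $i$-$i^*$ string picture, that iterating this eventually either lowers the weight or reaches the purely-imaginary situation handled by (I). This last point — that (S) cannot cycle forever without making progress — is essentially the affine analogue of the argument in \cite{MT} and in \cite{TW}; it uses that for a one-row order $\prec$, the roots $\beta_0 \succ \beta_{-1} \succ \cdots$ and $\cdots \succ \beta_2 \succ \beta_1$ are each obtained from simple roots by the braid-type moves $\prec \mapsto \prec^{s_i}$, so after finitely many reflections from the bottom one reaches $\delta$, and (I) pins down the imaginary part.

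For \emph{existence}, rather than building $P_b$ by hand I would \emph{define} it to be the MV polytope $MV_b$ of \cite{TW} (or, equivalently for one-row orders, read off $\cc^\prec_\beta$ from the Lusztig data of the PBW basis vector attached to $b$ and $\prec$, which is how the rest of the paper proceeds) and then check that $MV_b$ satisfies (C), (S), (I). Condition (C) is the statement that the crystal operator $\f_i$ (resp. $\f_i^*$) for a $\prec$-minimal (resp. $\prec$-maximal) simple root changes only the corresponding edge length by one — this is built into the definition/characterization of MV polytopes in \cite{TW} and its rank-one reduction. Condition (S) is the compatibility of MV polytopes with the reflection $\prec \mapsto \prec^{s_i}$ together with the operator $\sigma_i = (\e_i)^{\eps_i^*}(\f_i^*)^{\varphi_i^*}$ on $B(-\infty)$; here I would invoke Saito's crystal reflection (Proposition \ref{prop:Saito}) and the known behaviour of $MV_b$ under the corresponding symmetry of \cite{TW}, exactly paralleling \cite[Proposition 1.24]{TW}. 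Condition (I) is the rank-one-affine (i.e. $\widehat{\mathfrak{sl}}_2$-type, or more precisely $\ATT$-type when $r_i = 2$) statement relating the imaginary Lusztig datum $\cc_\delta$ to the two real directions $\alpha_i$ and $d_i\delta - r_i\alpha_i$ flanking $\delta$; this is precisely the content of the rank-two analysis in \cite{MT}, applied to the rank-one affine subalgebra attached to $i \in \bar I$, and the factor $r_i$ accounts for the $A^{(2)}_{2n}$ anomaly via the definition of $r_i$ and $d_i$.

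The main obstacle I anticipate is the \emph{interface between arbitrary convex orders and one-row orders} in condition (I) — i.e. making the Remark after Definition \ref{ctLusztig} rigorous in the polytopal setting. Conditions (C) and (S) only involve moving a simple root at one end of the order, which is well-behaved, but (I) quantifies over convex orders $\prec$ of standard coarse type with $\alpha_i$ minimal and compares $\cc^\prec$ with $\cc^{\preci}$, and one must know that these quantities are the ``same'' whether computed via $\prec$ directly or via an approximating one-row order. As the Remark indicates, this rests on Proposition \ref{prop:cCS} (crystal-theoretic Lusztig data $=$ PBW Lusztig data) and Corollary \ref{cor:onS+} (stability of PBW Lusztig data under one-row approximation), so the real work is to ensure those two inputs are in place before invoking (I); modulo that, the rest is a careful but routine transcription of the rank-two arguments of \cite{MT} into the general affine type. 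Finally, the ``furthermore'' clause — that every $P_b$ is induced from a facet-decorated polytope and equals $MV_b$ — follows immediately once existence is established by \emph{taking} $P_b = MV_b$, since MV polytopes of \cite{TW} are by construction facet-decorated and their edge decorations are the induced multipartitions; uniqueness then forces any solution of (C), (S), (I) to coincide with this one.
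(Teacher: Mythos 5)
Your overall strategy coincides with the paper's: existence is delegated to \cite{TW} (the map $b \mapsto MV_b$ is checked to satisfy (C), (S), (I)), and uniqueness is proved by induction on $\langle \wt(b),\rho^\vee\rangle$, using (C) and (S) to pin down all real Lusztig data (which are forced to equal the crystal-theoretic data of Definition \ref{ctLusztig}) and reserving (I) for the imaginary part. However, there is a genuine gap at exactly the step you compress into ``(I) pins down the imaginary part.'' First, a minor but real error: in affine type you never ``reach $\delta$'' after finitely many reflections from the bottom of a one-row order, since infinitely many real roots precede $\delta$; the correct dichotomy is that either some real Lusztig datum is nonzero (then that root is finitely far from one end, finitely many applications of (S) bring it to the front or back, (C) strips it off, and the weight drops), or \emph{all} real data vanish, i.e.\ $\cc^\prec(P_b)$ is purely imaginary.

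The purely imaginary case is the heart of the proof and cannot be dispatched by citing (I) alone: condition (I) relates $\cc^{\preci}(P_b)$ to $\cc^{\prec}(P_b)$ for a \emph{single} assignment $b\mapsto P_b$, whereas uniqueness requires comparing two candidate maps $P$ and $Q$, and $\sigma_i$ preserves weight, so the inductive hypothesis does not apply to $\sigma_i(b)$ directly. The missing maneuver is: choose $i$ with $\lambda^{(i)}\neq\emptyset$, reduce to standard coarse type with $\alpha_i$ minimal, apply $\sigma_i$ so that (I) converts the imaginary datum of $P_{\sigma_i(b)}$ into \emph{nonzero real} data in the directions $\alpha_i$ and $d_i\delta-r_i\alpha_i$; then $b'=\f_i^{\,r_i\lambda^{(i)}_1}\sigma_i(b)$ has strictly smaller weight, so $P_{b'}=Q_{b'}$ by induction; condition (C) transports the equality of imaginary decorations back up to $\sigma_i(b)$, the \emph{converse} direction of (I) (which you never invoke, but which is stated in the theorem precisely for this purpose) recovers $\cc^{\preci}(Q_{\sigma_i(b)})=\cc^{\preci}(P_{\sigma_i(b)})$, and a final application of (S) gives $\cc^{\prec}(Q_b)=\cc^{\prec}(P_b)$. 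Without this descent-and-return argument the induction never engages the multipartition decoration, so as written the uniqueness proof is incomplete.
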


\begin{Remark} \label{rem:GCdata}
Conditions (C) and (S) immediately imply that, for any one-row convex order $\prec$, and any real root $\beta$, $\cc^\prec_\beta(P_b)= \coa^\prec_\beta(b)$. That is, the polytopal real Lusztig data agrees with the crystal theoretic Lusztig data. 
\end{Remark}

\begin{proof}
Such a map is constructed in \cite{TW}, where it is also shown that the image consists only of face-decorated polytopes. It remains to prove uniqueness. 
 
Suppose we have two such maps $b \mapsto P_b$ and $b \mapsto Q_b$. It suffices to check that, for each $b \in B(-\infty)$, $P_b$ and $Q_b$ have the same Lusztig data for every one-row convex order. 
Proceed by induction on $\langle \wt(b), \rho^\vee \rangle$, the base case $b=b_-$ being obvious. 
Conditions (C) and (S) guarantee that for any $b \in B(-\infty)$, and any real $\beta$, $\cc_\beta^\prec(P_b)=\cc_\beta^\prec(Q_b)$, since both agree with the real crystal theoretic Lusztig data.

Fix $b \in B(-\infty)$, and suppose the uniqueness statement is true for all $b'$ such that $\langle \wt(b'), \rho^\vee \rangle < \langle \wt(b), \rho^\vee \rangle$. Fix a one-row order $\prec$. Consider the case where $P_b$ has some non-zero real Lusztig data with respect to $\prec$. Assume that $\cc_\beta^\prec(P_b) \neq 0$ for some real root $\beta \prec \delta$; the case $\beta \succ \delta$ is similar.   Let $\beta$ be the minimal such root, and set $n = \coa^\succ_\beta(b)$. 

Since $\prec$ is a one-row order, there is a reduced word $(i_1, \cdots, i_N)$ such that $ \alpha_{i_1} \prec s_{i_1}\alpha_{i_2} \prec \cdots \prec s_{i_1} \cdots s_{i_{N-1}} \alpha_{i_N} = \beta$ are the first $N$ real roots of the order.  Then consider the crystal element $b^\prime = \sigma_{i_1}^* \cdots \sigma_{i_{N-1}}^* \f_{i_N}^n \sigma_{i_{N-1}} \cdots \sigma_{i_1}b$. From this formula, we compute that $b^\prime \neq 0$,  $\wt (b^\prime) = \wt (b) - n \cdot \beta$, and $b =  \sigma_{i_1}^* \cdots \sigma_{i_{N-1}}^* \e_{i_N}^n \sigma_{i_{N-1}} \cdots \sigma_{i_1}b^{\prime}$. By induction we have $P_{b^\prime} = Q_{b^\prime}$, and in particular $\cc^{\prec}(P_{b^\prime} ) = \cc^{\prec}(Q_{b^\prime} )$.  

By applying condition (S) $N-1$ times, we have 
\[\cc^{\prec^{s_{i_1} \cdots s_{i_{N-1}}}}( P_ {\sigma_{i_{N-1}} \cdots \sigma_{i_1}b^{\prime}}) = \cc^{\prec^{s_{i_1} \cdots s_{i_{N-1}}}}( Q_ {\sigma_{i_{N-1}} \cdots \sigma_{i_1}b^{\prime}}).\] 
Applying condition (C), we have 
\[\cc^{\prec^{s_{i_1} \cdots s_{i_{N-1}}}}( P_ { \e_{i_N}^n\sigma_{i_{N-1}} \cdots \sigma_{i_1}b^{\prime}}) = \cc^{\prec^{s_{i_1} \cdots s_{i_{N-1}}}}( Q_ { \e_{i_N}^n\sigma_{i_{N-1}} \cdots \sigma_{i_1}b^{\prime}}).\] 
Applying condition (S) $N-1$ times again, 
\[\cc^{\prec}(P_{\sigma_{i_1}^* \cdots \sigma_{i_{N-1}}^* \e_{i_N}^n \sigma_{i_{N-1}} \cdots \sigma_{i_1}b^\prime} ) = \cc^{\prec}(Q_{\sigma_{i_1}^* \cdots \sigma_{i_{N-1}}^* \e_{i_N}^n \sigma_{i_{N-1}} \cdots \sigma_{i_1}b^\prime} ),\] 
But $b=\sigma_{i_1}^* \cdots \sigma_{i_{N-1}}^* \e_{i_N}^n \sigma_{i_{N-1}} \cdots \sigma_{i_1}b^\prime$, so $\cc^{\prec}(P_{b} ) = \cc^{\prec}(Q_{b} )$.

Now consider the case where $\cc_\beta^\prec(P_b)=0$ for all real roots $\beta$. We must have $\cc_\beta^\prec(Q_b)=0$ for all real $\beta$ by comparison to the crystal theoretic Lusztig data.   Let $(\lambda^{(1)}, \cdots , \lambda^{(n)}) = \cc^\prec_\delta (P_b)$. Let $i \in \bar I$ be such that $\lambda^{(i)}$ is a non-empty partition. Applying condition (S) if necessary, we can assume that  the course type of $\prec$ is $\overline e$. Because the crystal-theoretic Lusztig data agrees with the polytopal Lusztig data, we may further assume $\alpha_i$ is minimal for $\prec$.  Applying further $\sigma_i$, we are in the situation of  condition (I), i.e. we have $\cc^\preci_\delta(P_{\sigma_i(b)}) = \underline \lambda$ and $\cc^\preci_\beta(P_{\sigma_i(b)}) = 0$ for all real roots $\beta$. 
Applying condition (I), we have $\frac{1}{r_i} \cc_{\alpha_i}^{\prec}(P_{\sigma_i(b)})=\cc_{d_i \delta-r_i\alpha_i}^{\prec}(P_{\sigma_i(b)})= n$ for the positive integer $n = \lambda^{(i)}_1$,  $\cc^{\prec}_{\beta}(P_{\sigma_i(b)}) = 0$ for all other real roots $\beta$, and $\cc^{\prec}_\delta(P_{\sigma_i(b)}) = (\lambda^{(1)}, \cdots, \lambda^{(i)}\backslash \lambda^{(i)}_1, \cdots, \lambda^{(n)})$. By comparison with the crystal theoretic Lusztig data, we have the same formulas for $\cc_\beta^\prec(Q_{\sigma_i(b)})$ for all real roots $\beta$.  

Let $b^\prime = \f_i^{r_i n} \sigma_i(b)$. By induction $P_{b^\prime} = Q_{b^\prime}$. Using condition (C), we have $\cc^{\prec}_\delta(Q_{b^\prime}) = \cc^{\prec}_\delta(P_{b^\prime}) = (\lambda^{(1)}, \cdots, \lambda^{(i)}\backslash \lambda^{(i)}_1, \cdots, \lambda^{(n)})$. Using condition (C) again, we see that  $\cc^{\prec}_\delta(Q_{\sigma_i(b)})= (\lambda^{(1)}, \cdots, \lambda^{(i)}\backslash \lambda^{(i)}_1, \cdots, \lambda^{(n)})$. 
In particular, we see that $\cc^{\prec}(Q_{\sigma_i(b)}) = \cc^{\prec}(P_{\sigma_i(b)})$. Using condition (I), we conclude $\cc^{\preci}(Q_{\sigma_i(b)})$ is purely imaginary, and  $\cc^{\preci}_\delta(Q_{\sigma_i(b)}) = (\lambda^{(1)}, \cdots , \lambda^{(n)}) $, that is, $\cc^{\preci}(Q_{\sigma_i(b)})=\cc^{\preci}(P_{\sigma_i(b)})$. Applying (S) one last time, we have  $\cc^{\prec}(Q_{b})=\cc^{\prec}(P_{b})$.
\end{proof}

\begin{Remark}
We will construct a map satisfying Theorem \ref{th:MV-def} later in this paper (Theorem \ref{thm:PBWMV}), which can be used to complete the proof working only with affine PBW bases, so without appealing to \cite{TW}.
\end{Remark}

\begin{Remark}
For every 2-face $F$ of a decorated pseudo-Weyl polytope, the roots parallel to $F$ form a rank 2 sub-root
system $\Delta_F$ of either finite or affine type. Up to a small subtlety for faces parallel to $\delta$, affine MV polytopes are exactly those pseudo-Weyl polytopes such that every 2-face is an MV polytope for the corresponding rank 2 (finite or affine) root system. See \cite[Theorem B]{TW}.
\end{Remark}

\begin{Definition}
The decorated polytope $MV_b$ from Theorem \ref{th:MV-def} is called the MV polytope for $b$. 
\end{Definition}

The proof of Theorem \ref{th:MV-def} also yields the following, which we will need later. 

\begin{Proposition} \label{rem:partial-char} 
 Fix a multipartition $\underline{\lambda}$. Assume all the conditions of the Theorem \ref{th:MV-def} hold except that (I) is only known to hold for all multipartitions $\underline{\mu}$ with $\wt(\underline{\mu}) < \wt(\underline{\lambda})$ or $\underline{\mu}=\underline{\lambda}$.  Then for any convex order $\prec$, we have $ \cc^\prec(MV_b) = \cc^\prec(P_b)$  for all $b$ with $\wt(\cc^\prec_\delta(P_b)) < \wt(\underline\lambda)$ or $\cc^\prec_\delta(P_b)= \underline\lambda$. \qed
\end{Proposition}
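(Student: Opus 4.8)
The plan is to run the same induction as in the proof of Theorem \ref{th:MV-def}, but to track carefully where that argument uses condition (I) and to observe that at every step it is only invoked on multipartitions of weight at most $\wt(\underline\lambda)$. Concretely, I would induct on $\langle \wt(b), \rho^\vee \rangle$ over those $b$ with $\wt(\cc^\prec_\delta(P_b)) < \wt(\underline\lambda)$ or $\cc^\prec_\delta(P_b) = \underline\lambda$, showing $\cc^\prec(MV_b) = \cc^\prec(P_b)$ for all convex orders $\prec$ simultaneously. As in Theorem \ref{th:MV-def}, it suffices to do this for one-row orders, since an arbitrary order agrees with some one-row order on all roots of height at most $\langle \wt(b), \rho^\vee \rangle$ (Lemma \ref{lem:eo}), and the real Lusztig data is controlled by conditions (C) and (S) alone (matching the crystal-theoretic data, which does not see (I) at all).

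For the inductive step, fix such a $b$ and a one-row order $\prec$. If $P_b$ has some nonzero real Lusztig datum with respect to $\prec$, I follow the first case of the Theorem \ref{th:MV-def} proof verbatim: pick the minimal real root $\beta$ with $\cc^\prec_\beta(P_b) \neq 0$ (say $\beta \prec \delta$), set $n = \coa^\prec_\beta(b)$, and write $b = \sigma_{i_1}^* \cdots \sigma_{i_{N-1}}^* \e_{i_N}^n \sigma_{i_{N-1}} \cdots \sigma_{i_1} b'$ with $b' = \sigma_{i_1}^* \cdots \sigma_{i_{N-1}}^* \f_{i_N}^n \sigma_{i_{N-1}} \cdots \sigma_{i_1} b$. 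The point is that $b'$ satisfies $\wt(b') = \wt(b) - n\beta$ and, crucially, $\cc^\prec_\delta(P_{b'})$ has the same imaginary part as $\cc^\prec_\delta(P_b)$ — since crossing real edges (via (C)) and conjugating (via (S)) does not change the imaginary decoration — so $b'$ is again in the allowed range and the inductive hypothesis applies to it. Then only (C) and (S) are needed to push the equality $\cc^\prec(P_{b'}) = \cc^\prec(MV_{b'})$ back up to $b$, with no appeal to (I). (Here I would write $MV$ for the map from \cite{TW} that $Q$ denoted in the original proof.)

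If instead $\cc^\prec_\beta(P_b) = 0$ for all real roots $\beta$, then $\cc^\prec_\delta(P_b) = \underline\mu$ for some multipartition with $\wt(\underline\mu) \leq \wt(\underline\lambda)$, and I reduce exactly as before: apply (S) to get coarse type $\bar e$, apply $\sigma_i$ for an $i$ with $\mu^{(i)} \neq \emptyset$, and use condition (I) — which \emph{is} assumed to hold for $\underline\mu$ since $\wt(\underline\mu) \leq \wt(\underline\lambda)$ — to peel off the largest part $\mu^{(i)}_1$ as a pair of real edges. This produces $b' = \f_i^{r_i \mu^{(i)}_1} \sigma_i(b)$ with strictly smaller $\langle \wt(b'), \rho^\vee \rangle$ and with $\cc^\prec_\delta(P_{b'})$ equal to $\underline\mu$ with its largest part in slot $i$ removed, hence still of weight $< \wt(\underline\lambda)$ or (if $\underline\mu = \underline\lambda$) of strictly smaller weight — in either case within the allowed range. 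The inductive hypothesis plus (C), (S), and (I)-at-weight-$\underline\mu$ then give $\cc^\prec(MV_b) = \cc^\prec(P_b)$, closing the induction.

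The main thing to get right — and the only real obstacle — is bookkeeping: one must verify at each reduction that the auxiliary element $b'$ genuinely lands in the set $\{b : \wt(\cc^\prec_\delta(P_b)) < \wt(\underline\lambda) \text{ or } \cc^\prec_\delta(P_b) = \underline\lambda\}$, and that every invocation of (I) in the argument is on a multipartition of weight $\leq \wt(\underline\lambda)$. Both hold because the operations used (crossing a real edge via (C), conjugating via (S), and peeling one part via (I)) never increase the weight of the imaginary decoration, and strictly decrease it precisely when we are forced to use (I) on $\underline\lambda$ itself. Since the conclusion for each $b$ in range is exactly the conjunction over all one-row orders $\prec$ of $\cc^\prec(MV_b) = \cc^\prec(P_b)$, and a general order reduces to the one-row case, this yields the Proposition. \qed
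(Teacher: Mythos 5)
Your proposal is correct and is exactly what the paper intends: the paper offers no separate argument for this Proposition, simply asserting that it follows from the proof of Theorem \ref{th:MV-def}, and your bookkeeping of where condition (I) is invoked (only in the purely-imaginary case, and only on the multipartition $\cc^\prec_\delta(P_b)$ itself, while (C) and (S) handle everything else) is precisely the content of that assertion. One small wording fix: (I) is \emph{not} assumed for every $\underline{\mu}$ with $\wt(\underline{\mu})\le\wt(\underline{\lambda})$, since there are multipartitions of weight exactly $\wt(\underline{\lambda})$ other than $\underline{\lambda}$; the invocation is legitimate because $b$ being in range forces $\cc^\prec_\delta(P_b)$ to satisfy the stated disjunction ($\wt < \wt(\underline{\lambda})$ or $=\underline{\lambda}$), which is exactly the set on which (I) is granted.
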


\section{Affine PBW bases}

We now a PBW basis for any convex order on $\Delta_+^{min}$. 

\subsection{Real root vectors}

\begin{Definition}
Fix a one-row order on $\Delta^{min}_+$ and consider the corresponding infinite reduced expression $\cdots s_{i_{-2}} s_{i_{-1}} s_{i_0} s_{i_1} s_{i_2} \cdots$ from Proposition \ref{prop:1ro}. Define the corresponding {\bf real root vectors} by 
$$E^\prec_{\beta_k} = 
\begin{cases} T_{i_1} \cdots T_{i_{k-1}} E_{i_k}  \quad \text{ if } k \geq 0 \\
T_{i_{0}}^{-1}T_{i_{-1}}^{-1} \cdots T_{i_{k+1}}^{-1} E_{i_k} \quad \text{ if } k \leq 0.
\end{cases}$$
When the chosen order is clear we often leave off the superscript $\prec$. 
\end{Definition}

\begin{Proposition} \label{prop:equalE}
Fix $\beta \in \Delta_+^{min}$. Let $M_\beta = \{ (\alpha_1, \alpha_2) \in \Delta_+^2 : \beta = \alpha_1 + \alpha_2 \}$. If $\prec, \prec'$ are two one-row convex orders whose restriction to all pairs in $M_\beta$ agree then $E_\beta^\prec= E_\beta^{\prec'}$.
\end{Proposition}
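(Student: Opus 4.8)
The plan is to reduce the statement about two arbitrary one-row orders to a sequence of ``elementary moves'' on one-row orders, each of which changes the order only by transposing two adjacent roots whose sum is not a root, and to check the proposition is invariant under such a move. First I would observe that the condition ``$\beta=\alpha_1+\alpha_2$ with $\alpha_1,\alpha_2\in\Delta_+$'' only involves finitely many roots (finitely far below or above $\beta$ in any one-row order, since heights are bounded by $\height(\beta)$), so ``the restriction of $\prec$ and $\prec'$ to all pairs in $M_\beta$ agree'' is a finite condition; in particular it says that $\beta$ itself is in the same position relative to each such decomposing pair. Then I would invoke the combinatorics of convex (equivalently clos) orders: two one-row orders that agree on the relative position of $\beta$ and all pairs summing to $\beta$ can be connected by a finite chain of one-row orders $\prec = \prec^{(0)}, \prec^{(1)}, \ldots, \prec^{(m)} = \prec'$, where $\prec^{(k+1)}$ is obtained from $\prec^{(k)}$ by swapping two roots $\gamma,\gamma'$ that are adjacent in the order, with $\gamma+\gamma'\notin\Delta_+$, and moreover neither swap ever separates $\beta$ from a pair in $M_\beta$ or changes the relative order within such a pair. (This uses the classification of convex orders from \cite{Ito} together with Lemma \ref{lem:eo}; the point is that an adjacent transposition of two roots whose sum is not a root takes a convex order to a convex order, and any two convex orders are connected by such moves.)

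Next, for a single elementary move swapping adjacent $\gamma,\gamma'$ with $\gamma+\gamma'\notin\Delta_+$, I would show $E^\prec_\beta = E^{\prec^{s}}_\beta$ where $\prec^s$ is the move. The key tool is that an adjacent transposition of the order corresponds, on the level of the infinite reduced word from Proposition \ref{prop:1ro}, to a braid move of the form $s_a s_b = s_b s_a$ (commuting generators) at the spot in the word where $\gamma$ and $\gamma'$ occur. Since $\gamma+\gamma'$ is not a root, the corresponding Cartan entry $a_{ab}=0$, so $T_a T_b = T_b T_a$; therefore the braid group elements $T_{i_1}\cdots T_{i_{k-1}}$ (resp.\ the negative products for $k\leq 0$) used to define $E^\prec_\gamma, E^\prec_{\gamma'}$ are unchanged up to this commuting swap, and hence the real root vectors for all roots except possibly $\gamma,\gamma'$ are literally unchanged, while $E_\gamma$ and $E_{\gamma'}$ are each given by the same formula before and after the swap (because $T_a$ and $T_b$ commute and $T_a E_b = E_b$, $T_b E_a = E_a$ when $a_{ab}=0$). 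In particular, since $\beta\notin\{\gamma,\gamma'\}$ (as $\gamma+\gamma'\notin\Delta_+$ forces $\beta$ to be strictly on one side of the swapped pair, or the swap is irrelevant to $\beta$), we get $E^\prec_\beta = E^{\prec^s}_\beta$. Composing over the chain gives $E^\prec_\beta = E^{\prec'}_\beta$.

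The main obstacle I expect is the combinatorial connectivity step: showing that two one-row orders agreeing on the relative order of $\beta$ together with all pairs in $M_\beta$ can be joined by a chain of one-row orders through adjacent ``non-root-sum'' transpositions that never disturb $\beta$ or $M_\beta$. One has to be careful that the intermediate orders remain one-row (not just convex) and that imaginary roots and the possibility $\g = A^{(2)}_{2n}$ are handled, and that each transposition really is realized by commuting Coxeter generators rather than a longer braid relation — a transposition of adjacent roots whose sum \emph{is} a root would correspond to a length-three braid relation and would genuinely change the root vectors, which is exactly why the hypothesis on $M_\beta$ is needed. I would handle this by working with the finite truncations: pass to a large finite initial (or final) segment of the order containing $\beta$ and all of $M_\beta$, use the standard fact that reduced words for a fixed Weyl group element are connected by braid moves, and argue that one may choose the sequence of braid moves so that the length-three (and longer) moves all occur ``away from'' $\beta$ and $M_\beta$, i.e.\ among roots all lying strictly on one side of $\beta$; Lemma \ref{lem:eo} lets us extend any such finite manipulation to a one-row order on all of $\Delta_+^{min}$.
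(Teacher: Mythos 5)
Your plan replaces the paper's continuous interpolation by a discrete chain of commuting elementary moves, but there are two genuine gaps, and the second is fatal.

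First, the basic move you propose --- swap adjacent $\gamma,\gamma'$ with $\gamma+\gamma'\notin\Delta_+$ --- does \emph{not} in general preserve convexity in affine type. In $A_1^{(1)}$ take the one-row order
$\alpha_0 \prec \delta+\alpha_0 \prec 2\delta+\alpha_0 \prec \cdots \prec \delta \prec \cdots \prec \delta+\alpha_1 \prec \alpha_1$.
The adjacent roots $\alpha_0$ and $\delta+\alpha_0$ sum to $3\delta-2\alpha_1$, which is not a root, yet swapping them places $\delta+\alpha_0$ before both $\alpha_0$ and $\delta$, so the resulting order is not convex (the cone generated by the prefix $\{\delta+\alpha_0\}$ meets the cone generated by the complement in the ray through $\delta+\alpha_0 = \alpha_0+\delta$). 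Convexity is a constraint on \emph{every} decomposition of \emph{every} root, so the local condition ``$\gamma+\gamma'\notin\Delta_+$'' on the pair being swapped is far from sufficient.

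Second, and more fundamentally, your fallback --- that all length-$\geq 3$ braid moves can be rearranged to occur ``away from $\beta$'' --- is not something you can arrange, and in fact the proposition's whole content lies precisely in the case where you cannot avoid them. The $M_\beta$ hypothesis ensures that $\beta$ is \emph{simple} in any rank-$2$ subsystem that has to be reversed; it does not let you avoid reversing such a subsystem. For example, if $\gamma_1,\beta$ are the simple roots of an $A_2$ subsystem with third root $\gamma_1+\beta$, then $\prec$ could have $\gamma_1 \prec \gamma_1+\beta \prec \beta$ while $\prec'$ has $\beta \prec \gamma_1+\beta \prec \gamma_1$; the pair $(\gamma_1,\gamma_1+\beta)$ is not in $M_\beta$, so the hypothesis can be satisfied, yet passing between these orders forces a braid relation $T_aT_bT_a = T_bT_aT_b$ right at $\beta$, and one needs the actual identity $T_bT_aE_b = E_a$ to see that $E_\beta$ survives. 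The paper's argument is exactly tailored to this: it interpolates a hyperplane $H_t$ through $\beta$ between the two separating hyperplanes; as $H_t$ crosses a rank-$2$ subsystem the order reverses on that subsystem (a braid move of whatever length); the $M_\beta$ hypothesis guarantees $\beta$ is simple in every such subsystem, and the proof closes with a rank-$2$ case-by-case check over $A_2, B_2, G_2, A_1^{(1)}, A_2^{(2)}$. That rank-$2$ verification is the indispensable step your proposal omits, and it is also the reason the conclusion would fail if $\prec,\prec'$ disagreed on some $(\alpha_1,\alpha_2)\in M_\beta$.
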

\begin{proof}
Let $S^+$ be the set of roots $\succ \beta$, and $S^-$ the set of roots $\prec \beta$. Define things similarly for $\prec'$. Choose hyperplanes $H$ and $H'$ containing $\beta$ and the origin, and separating the convex hulls of $S^+, S^-$ and $S'^+, S'^-$ respectively. Choose an Euclidean inner product $(,)$ on $\mathfrak{h}^*$. Let $v$ (resp. $v'$) be the unit normal vector to $H$ (resp. $H'$) satisfying $ (v, S+) > 0$ (resp. $(v',S'+) > 0$). For
$0 \leq t \leq N$, 
defined $H_t$ to be the hyperplane that is the orthocomplement of the vector 
$v_t = t \cdot v+ (1-t)\cdot  v'$ for $0 \leq t \leq 1$. By possibly perturbing $v, v'$, we can assume that the intersection of each $H_t$ with $\Delta^+$ is usually just $\beta$, and otherwise is contained in a 2 dimensional subspace, and that this happens at most countably many times. 

Each time one moves past a $t$ where $H_t$ contains a rank 2 root system the convex order changes by reversing the order of that rank 2 root system. If $\beta$ is not part of that root system $E_\beta$ certainly does not change. Otherwise, the condition in the statement implies that the system does not contain a pair $(\alpha_1, \alpha_2) \in M_\beta$, so $\beta$ must be a simple root in that system. Checking the rank two finite and affine root systems case by case one sees that $E_\beta$ does not change as you reverse the order. The lemma follows.
\end{proof}

\begin{Corollary} \label{cor:simpleroots}
For all simple roots $\alpha_i$ and all convex orders $\prec$, $E_{\alpha_i}^{\prec}=E_i$.
\end{Corollary}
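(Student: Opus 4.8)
The plan is to deduce this directly from Proposition \ref{prop:equalE} together with an explicit computation at one well-chosen convex order. First I would fix a simple root $\alpha_i$ and note that $M_{\alpha_i} = \{(\alpha_1,\alpha_2) \in \Delta_+^2 : \alpha_i = \alpha_1 + \alpha_2\}$ is empty: a simple root cannot be written as a sum of two positive roots. Hence the hypothesis of Proposition \ref{prop:equalE} is vacuously satisfied for any pair of one-row convex orders, so $E_{\alpha_i}^\prec$ is \emph{independent} of the one-row order $\prec$. (For a general convex order, one reduces to the one-row case via Lemma \ref{lem:eo}, since $E_{\alpha_i}^\prec$ only depends on the restriction of $\prec$ to roots of bounded height — here only $\alpha_i$ itself is relevant — exactly as in the Remark following Definition \ref{ctLusztig}.)

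Next I would exhibit a single one-row order in which $\alpha_i$ appears as $\beta_1$, i.e. as the very last root $s_{i_1}\cdots s_{i_{N-1}}\alpha_{i_N}$ with $N=1$ and $i_1 = i$ — equivalently, a one-row order whose associated infinite reduced expression $\cdots s_{i_{-1}} s_{i_0} s_{i_1} s_{i_2} \cdots$ has $i_1 = i$. Such an order exists: start with any infinite word beginning (reading to the right from the $s_1$ position) with $s_i$; concretely, by Lemma \ref{lem:eo} any convex order on the one-element set $\{\alpha_i\}$ extends to a one-row order, and one can arrange that $\alpha_i = \beta_1$ is the maximal root. For that order, the definition of the real root vector gives, in the $k\geq 0$ branch with $k=1$,
\[
E_{\alpha_i}^\prec = E_{\beta_1}^\prec = T_{i_1}\cdots T_{i_{0}} E_{i_1} = E_{i_1} = E_i,
\]
since the product $T_{i_1}\cdots T_{i_{k-1}}$ is empty when $k=1$. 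Combining this with the independence established above yields $E_{\alpha_i}^\prec = E_i$ for all $\prec$.

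The only point needing care is the existence of a one-row order placing $\alpha_i$ at the extreme end, and confirming that in the Definition of real root vectors the index-$1$ root $\beta_1$ really is assigned the bare generator $E_{i_1}$ with no braid operators applied; both are immediate from Proposition \ref{prop:1ro} and the displayed case distinction, so I do not anticipate a genuine obstacle. The main (minor) subtlety is bookkeeping the two different normalizations of "first" versus "last" roots in the one-row picture, but either branch of the case distinction in the definition of $E_{\beta_k}^\prec$ gives $E_{\beta_{\pm\text{extreme}}} = E_{i_{\pm\text{extreme}}}$ with an empty braid product, so the conclusion is the same.
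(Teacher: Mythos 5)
Your proposal is correct and takes essentially the same approach as the paper: observe that $M_{\alpha_i}$ is empty so Proposition \ref{prop:equalE} gives independence of the convex order, then compute $E_{\alpha_i}^\prec = E_i$ directly from the definition in one order where $\alpha_i$ is extremal (the paper uses the minimal end, you use the maximal end, but both give an empty braid product). The only nitpick is that Lemma \ref{lem:eo} is not really what gives you a one-row order with $\alpha_i$ at the extreme — that existence is immediate since any infinite reduced word may begin with $s_i$ — but this does not affect the argument.
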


\begin{proof}
If $\alpha_i$ is first in the order, $E_{\alpha_i}=E_i$ by definition. 
By Proposition \ref{prop:equalE}, since $\alpha_i$ cannot be written as a sum of any two positive roots, $E_{\alpha_i}$ is independent of the convex order, so this is true for all convex orders. 
\end{proof}

\begin{Corollary} \label{cor:onS+}
Fix $\prec $ and a root $\beta$.
Let $S^+$ be the set of roots $\succ \beta$, and $S^-$ the set of roots $\prec \beta$. 
The root vector $E^\prec_{\beta}$ only depends on the data of $S^+$, not the precise order on $S^+$ and $S^-$. 
\end{Corollary}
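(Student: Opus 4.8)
The plan is to reduce the statement to Proposition \ref{prop:equalE} together with an approximation argument using one-row orders. First I would observe that $E^\prec_\beta$ was defined in this section only for one-row orders; for a general convex order $\prec$ the root vector is defined by passing to a one-row order agreeing with $\prec$ on all roots of bounded height (as in the Remark following Definition \ref{ctLusztig}), so I must first check this is legitimate, i.e.\ that any two one-row orders agreeing with $\prec$ on $S^+$ and $S^-$ (up to reordering within each) give the same $E_\beta$. Concretely: fix a one-row order $\prec'$ refining $\prec$. Any other admissible choice $\prec''$ agrees with $\prec'$ on the partition of $\Delta_+^{min}$ into $S^+$, $\{\beta\}$, and $S^-$; since $M_\beta$ is a finite set, every pair in $M_\beta$ has both members either all in $S^+$ or all in $S^-$ (they cannot straddle $\beta$, as $\beta$ lies strictly between them by convexity — this is where convexity of the order enters), hence $\prec'$ and $\prec''$ restrict identically to each pair in $M_\beta$. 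Proposition \ref{prop:equalE} then gives $E^{\prec'}_\beta = E^{\prec''}_\beta$, so the root vector depends only on the set $S^+$.

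The one subtlety is that Proposition \ref{prop:equalE} is stated for one-row orders, so I cannot apply it directly to $\prec$ itself when $\prec$ is not one-row. This is precisely why I pass to refinements $\prec'$: by Lemma \ref{lem:eo}, given any finite initial collection of roots ordered by $\prec$ there is a one-row order restricting to it, and in particular one can choose $\prec'$ one-row with $S^+_{\prec'} \supseteq$ (any prescribed finite chunk of) $S^+_\prec$ and agreeing with $\prec$ there. The point to verify is that the resulting $E_\beta$ stabilizes: enlarging the agreement region does not change $E_\beta$ because, by the above, $E_\beta$ only sees the pairs in the finite set $M_\beta$, all of which are pinned down once the refinement agrees with $\prec$ on a neighborhood of $\beta$ large enough to contain $M_\beta$. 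So any two sufficiently fine one-row refinements give the same vector, and that common value is the definition of $E^\prec_\beta$; it manifestly depends only on which roots lie in $S^+$.

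The main obstacle I anticipate is bookkeeping rather than conceptual: making precise the claim that ``$\beta$ lies strictly between the two summands $\alpha_1,\alpha_2$ in any convex order,'' which is what forces every pair in $M_\beta$ to lie on one side of $\beta$. This follows from the clos-order description (the Remark after Definition \ref{def:convex}): if $\alpha_1 + \alpha_2 = \beta$ with all three positive roots, then $\beta$ occurs between $\alpha_1$ and $\alpha_2$, so in particular $\alpha_1$ and $\alpha_2$ are on opposite sides of $\beta$ — which is the opposite of what I want. Let me re-examine: I actually need that for the hypothesis of Proposition \ref{prop:equalE} to be checkable, namely that $\prec'$ and $\prec''$ agree on each pair in $M_\beta$; since $\prec'$ and $\prec''$ both refine $\prec$ and both place $\beta$ in the same position relative to $S^+$ and $S^-$, and since each $\alpha_i$ in a pair $(\alpha_1,\alpha_2) \in M_\beta$ lies in $S^+$ or $S^-$ according to $\prec$, the relative order of $\alpha_1$ and $\alpha_2$ within $S^+ \sqcup S^-$ is whatever $\prec$ dictates and is inherited identically by both refinements — so there is in fact nothing to check beyond noting $\prec$ itself determines it. Thus the argument goes through, and the genuinely careful step is simply confirming that the definition of $E^\prec_\beta$ for non-one-row $\prec$ is independent of the chosen one-row refinement, which the above makes routine. $\qed$
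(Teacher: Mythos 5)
Your argument proves a weaker statement than the corollary. The corollary asserts that $E_\beta$ is unchanged when the order \emph{within} $S^+$ and \emph{within} $S^-$ is altered arbitrarily, keeping only the partition of $\Delta_+^{min}$ into $S^-$, $\{\beta\}$, $S^+$ fixed. Your final argument, however, only compares one-row orders that both refine the \emph{same} order $\prec$, so they automatically agree on every pair in $M_\beta$ and there is, as you say, ``nothing to check.'' That establishes only that $E^\prec_\beta$ is well-defined for a fixed $\prec$ (which is already the content of Definition \ref{prop:gen-real} plus Proposition \ref{prop:equalE}), not that it is invariant under changing the internal orders of $S^+$ and $S^-$ --- and the latter is exactly what is used downstream, e.g.\ in Corollary \ref{cor:can-real} and Proposition \ref{prop:lop}.

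The missing idea is the fact you stumbled on and then discarded as ``the opposite of what I want'': by convexity, for any $(\alpha_1,\alpha_2)\in M_\beta$ the root $\beta=\alpha_1+\alpha_2$ lies strictly between $\alpha_1$ and $\alpha_2$, so exactly one of them lies in $S^+$ and the other in $S^-$. This is precisely what you need, not an obstacle: it means the relative order of $\alpha_1$ and $\alpha_2$ is determined by the partition alone (the one in $S^-$ precedes the one in $S^+$ in \emph{every} convex order with this partition), so no reshuffling of $S^+$ or of $S^-$ can ever reorder a pair in $M_\beta$, and Proposition \ref{prop:equalE} applies directly to any two orders with the same $S^+$. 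Note that your original claim --- that both members of a pair lie on the same side of $\beta$ --- would, if it were true, make the corollary doubtful, since shuffling $S^+$ could then reorder such a pair and the hypothesis of Proposition \ref{prop:equalE} would fail. With the straddling observation restored to its correct role, the remaining reduction to one-row orders via Lemma \ref{lem:eo} is fine and matches the paper's intent.
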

\begin{proof}
If $\beta=\beta'+\beta''$ then by convexity exactly one of $\beta', \beta''$ is in $S^+.$ Thus if the convex order is changed but in such a way that $S^+$ remains the same, they have not been reordered. The corollary follows from Proposition \ref{prop:equalE}.
\end{proof}

\begin{Definition} \label{prop:gen-real}
Fix a convex order $\prec$ and a real root $\beta$. By Lemma \ref{lem:eo} we can find a one-row order $\prec'$ that agrees with $\prec$ for all pairs of roots of depth at most the depth of $\beta$. Set $E_\beta^\prec= E_\beta^{\prec'}$ (this is well defined by Proposition \ref{prop:equalE}).
\end{Definition}

\begin{Corollary} \label{cor:can-real} Fix a coarse type.
If $\bar \alpha$ is a simple root for the corresponding positive system, then, for all $n \geq 0$, $E_{n\delta\pm\alpha}^\prec$ are the same for all orders $\prec$ with this coarse type.   
\end{Corollary}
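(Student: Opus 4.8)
The plan is to reduce the statement to Proposition~\ref{prop:equalE}. Combining Definition~\ref{prop:gen-real} with Proposition~\ref{prop:equalE}, for any convex order $\prec$ and any real root $\beta$ the root vector $E_\beta^\prec$ depends only on how $\prec$ orders the pairs in $M_\beta = \{(\gamma_1,\gamma_2)\in\Delta_+^2 : \gamma_1+\gamma_2=\beta\}$: the one-row order furnished by Definition~\ref{prop:gen-real} agrees with $\prec$ on all such pairs (they involve only roots of smaller depth than $\beta$), and Proposition~\ref{prop:equalE} then shows the corresponding root vectors agree. So the Corollary reduces to the purely combinatorial statement that any two convex orders of the fixed coarse type $\bar w$ order the pairs of $M_\beta$ in the same way, where $\beta$ is a positive real root with classical part $\pm\bar\alpha$ and $\bar\alpha$ is a simple root of the positive system $\bar w\bar\Delta_+$ (so $\beta$ is one of the roots $n\delta\pm\alpha$).

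To prove this I would project each pair to the finite root system. If $(\gamma_1,\gamma_2)\in M_\beta$ then $\bar\gamma_1 + \bar\gamma_2 = \bar\beta = \pm\bar\alpha$. A simple root of $\bar w\bar\Delta_+$ --- and hence, by negation, a simple root of $\bar w\bar\Delta_-$ --- cannot be written as a sum of two roots of the same $\bar w$-sign, by the usual height argument. Hence, after relabeling, one of $\gamma_1,\gamma_2$ is a real root whose classical part lies in $\bar w\bar\Delta_+$, while the other is either a multiple of $\delta$ or a real root whose classical part lies in $\bar w\bar\Delta_-$ (with the roles of $\bar w\bar\Delta_+$ and $\bar w\bar\Delta_-$ exchanged when $\bar\beta = -\bar\alpha$). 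By the definition of coarse type, a real root with classical part in $\bar w\bar\Delta_+$ (resp.\ $\bar w\bar\Delta_-$) is $\succ\delta$ (resp.\ $\prec\delta$) in every convex order of coarse type $\bar w$; and a multiple of $\delta$ occupies the position of $\delta$. Since the larger member of the pair is thereby characterized intrinsically in terms of $\bar w$, any two convex orders of coarse type $\bar w$ order that pair the same way. This gives the Corollary.

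The step needing the most care --- and the main obstacle --- is the bookkeeping around the imaginary roots $k\delta$ with $k\ge 2$ occurring in $M_\beta$: one must place them at the position of $\delta$ in the extended order and check, as in the proof of Proposition~\ref{prop:equalE} (by inspecting the rank-two affine subsystems of $\Delta$ that contain $\beta$), that this is consistent with convexity. The twisted types, in particular type $A^{(2)}_{2n}$ where the coarse-type condition additionally involves $2\bar\gamma$, require the corresponding case-by-case check; but the underlying mechanism --- a simple root does not split into two roots of the same $\bar w$-sign --- is unaffected.
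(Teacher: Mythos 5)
Your proof is correct and takes essentially the same approach as the paper: reduce to Proposition~\ref{prop:equalE} and observe that any decomposition $n\delta\pm\alpha=\beta_1+\beta_2$ must, because $\bar\alpha$ is simple for the positive system of coarse type $\bar w$, place one summand on each side of $\delta$, so its relative ordering is fixed by the coarse type. The paper states this more tersely (without spelling out the classical-part projection and the simple-root-does-not-split-into-same-sign-roots observation), but the argument is the same; your extra care about the imaginary roots $k\delta$ and the $A^{(2)}_{2n}$ bookkeeping addresses the same subtleties that Proposition~\ref{prop:equalE} already silently handles.
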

\begin{proof}
Any expression $n \delta\pm\alpha= \beta_1+\beta_2$ must have $\beta_1 \leq \delta$ and $\beta_2 \geq \delta$ (or vice versa). The order of these two roots is the same for any convex order of the given coarse type. So the statement follows from Proposition \ref{prop:equalE}.
\end{proof}

\begin{Lemma} \label{lem:real*} 
For any convex order $\prec$ and any real root $\beta$, $(E_\beta^\prec)^* = E_\beta^{\prec^*}$, where on the left side $*$ is Kashiwara's involution, and, as in Definition \ref{def:*order}, $\prec^*$ is the reverse order of $\prec$.
\end{Lemma}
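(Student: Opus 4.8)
The statement is local in nature: both sides only depend on how $\prec$ orders the finitely many pairs of roots summing to $\beta$, so by Proposition~\ref{prop:equalE} (and Definition~\ref{prop:gen-real}) it suffices to prove the claim when $\prec$ is a one-row order. Fix such a $\prec$, with associated bi-infinite reduced word $\cdots s_{i_{-1}}s_{i_0}s_{i_1}\cdots$. The reversed order $\prec^*$ is then the one-row order obtained by reversing the word, i.e. its underlying reduced word is $\cdots s_{i_1}s_{i_0}s_{i_{-1}}\cdots$, with the roles of positive and negative indices swapped. So if $\beta = \beta_k$ in the $\prec$-indexing, then in the $\prec^*$-indexing $\beta$ appears as $\beta_{-k}$ (up to the bookkeeping about the $k\ge 0$ vs. $k\le 0$ split, which I will need to set up carefully). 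The plan is then to compute both $(E^\prec_{\beta_k})^*$ and $E^{\prec^*}_{\beta_{-k}}$ explicitly from the definition of the real root vectors and show they coincide.

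\textbf{Key steps.} First I would record two facts about Kashiwara's involution $*$ and the braid operators $T_i$: (a) $E_i^* = E_i$ for every $i$ (this is the defining property of $*$), and (b) the interaction of $*$ with $T_i$ — specifically that $* \circ T_i = T_i^{-1} \circ *$ on $\Uplus$ (up to, possibly, a correction; the precise identity needs to be pinned down, e.g. $(T_i x)^* = T_i^{-1}(x^*)$ for $x$ in the appropriate subalgebra where $T_i x \in \Uplus$). This is a standard property of Lusztig's symmetries and Kashiwara's involution, and is the crux of the argument. Given (a) and (b), for $k \ge 0$ one computes
\[
(E^\prec_{\beta_k})^* = (T_{i_1}\cdots T_{i_{k-1}} E_{i_k})^* = T_{i_1}^{-1}\cdots T_{i_{k-1}}^{-1}(E_{i_k}^*) = T_{i_1}^{-1}\cdots T_{i_{k-1}}^{-1} E_{i_k},
\]
and this last expression is exactly $E^{\prec^*}_{\beta}$ when one reads $\beta$ in the reversed word as a "non-positive index" root, matching the second branch of the definition of real root vectors. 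The case $k \le 0$ is symmetric, using $(T_i^{-1}x)^* = T_i(x^*)$. I would check that the index conventions in Proposition~\ref{prop:1ro} and in the definition of $E^\prec_{\beta_k}$ line up correctly under reversal — in particular that $s_1 \cdots s_{i-1}\alpha_i$ and $s_0 s_{-1}\cdots s_{i+1}\alpha_i$ swap roles appropriately — so that the word produced by reversing $\prec$ really is a valid one-row presentation of $\prec^*$ with the stated root vectors.

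\textbf{Main obstacle.} The genuine content is step (b): establishing the precise commutation relation between $*$ and $T_i$ on $\Uplus$. One has to be careful because $*$ is an anti-involution and $T_i$ is an algebra automorphism, and because $T_i$ does not preserve $\Uplus$ (only $T_w$ for $w$ with $\ell(s_i w) > \ell(w)$ sends appropriate elements into $\Uplus$), so the identity must be stated on the correct domain — which is precisely where the real root vectors live. I expect this to follow from known formulas in Lusztig's book (the behaviour of $T''_{i,1}$ under the algebra anti-automorphisms) combined with the fact that $*$ fixes the Chevalley generators; it may be cleanest to verify it on generators $E_j$ using the explicit formulas for $T_i(E_j)$ given in the excerpt, noting that the $*$ of the sum $\sum_{r+s=-a_{ij}}(-1)^r q_i^{-r} E_i^{(s)}E_j E_i^{(r)}$ reverses the order of the product and hence reproduces the formula for $T_i^{-1}(E_j)$. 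Once this key identity is in hand, the rest is the bookkeeping described above.
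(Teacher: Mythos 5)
Your proposal is correct and is essentially the paper's own argument: the paper's proof is the one-liner that the claim ``follows immediately from the definition and the fact that $T_i^{-1} = * \circ T_i \circ *$,'' which is exactly your key step (b), and your reduction to one-row orders plus the index bookkeeping under reversal is the implicit content of ``from the definition.'' The identity $* \circ T_i \circ * = T_i^{-1}$ is indeed taken as known in the paper (it is also invoked in the proof of Proposition \ref{prop:cCS}(v)), so no correction term is needed.
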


\begin{proof}
  This follows immediately from the definition and the fact that $T_i^{-1} = * \circ T_i \circ *$.
\end{proof}

\subsection{Imaginary root vectors}

In \cite{BCP}, the following pairwise commuting vectors of imaginary weight are introduced. 
\begin{align} \label{eq:imformula}
\ \tpsi_{i,kd_i} = E_{kd_i\delta - \alpha_i} E_{\alpha_i} - q_i^{-2}E_{\alpha_i} E_{kd_i\delta - \alpha_i}
\end{align}
The real root vectors are for the standard coarse type, and do not depend on which convex order of that type is used by Corollary \ref{cor:can-real}.
For every partition $\lambda$, they introduce imaginary root vectors $S^\lambda_i$ which are polynomials in the $\tpsi_{i,kd_i}$.
The polynomial $P_\lambda$ is recursively defined, and it's exact form is unimportant for our purposes; we remark however that $P_\lambda$ is the same in all cases except $A^{(n)}_{2n}$ and $i=n$. See \cite[Equation 3.8]{BN}. What is relevant here is that the ${S^{\lambda}_i}$ multiply exactly as Schur functions:
\begin{align}
  \label{eq:53}
  S^{\lambda}_i \cdot S^{\mu}_i = \sum_{\nu} c^{\lambda,\mu}_\nu S^{\nu}_i
\end{align}
where $c^{\lambda,\mu}_\nu$ is a Littlewood-Richardson coefficient,
and that $  \wt( S_{i}^{\bar w, \lambda}) = d_i |\lambda| \cdot \delta$
where $|\lambda|$ is the sum of the parts of the partition $\lambda$.

\begin{Definition} \label{def:im-root-vectors} 
Fix $\bar w \in \overline{W}$. Fix a convex order of coarse type $\bar w$. For each $i \in \bar I$ let $\gamma_i$ be the minimal affine root such that $\bar \gamma_i= \bar w \bar \alpha_i$. Define 
\begin{align*}
\IPsi{\bar w}{i,k}= E^{\bar{w}}_{k d_i\delta-\gamma_i}E^{\bar{w}}_{{\gamma_i}}  - q_i^{-2} E^{\bar{w}}_{{\gamma_i}}E^{\bar{w}}_{k d_i\delta-\gamma_i} 
\quad \text{  and } \quad
S_{i}^{\bar w, \lambda} = P_\lambda( \IPsi{\bar w}{i,1}, \cdots \IPsi{\bar w}{i,N(\lambda)} ).
\end{align*}
By Corolary \ref{cor:can-real} this does not depend on the choise of $\prec$. 
Notice that $\IPsi{\bar e}{i,k} = \tpsi_{i,kd_i}$.
\end{Definition}


\begin{Definition}
For any coarse type $\bar w$ and multipartition $\underline{\lambda} = (\lambda^{(1)},\cdots,\lambda^{(n)})$, let $S^{\bar w,\underline{\lambda}} = \prod_{i\in \bar I} S^{\bar w,\lambda^{(i)}}_i$.
For any convex order $\prec$ of coarse type $\bar w$, let $S^{\prec,\underline{\lambda}} = S^{\bar w,\underline{\lambda}}.$
\end{Definition}
We therefore have $\wt(S^{\prec,\underline{\lambda}} ) = \wt(\underline \lambda) \cdot \delta$. For simplicity write $S^\lambda_i = S^{\bar e, \lambda}_i$ and $S^{\underline \lambda} = S^{\bar e, \underline \lambda}_i$.
The following is originally due to Beck \cite{Beck1} in the untwisted affine case and was extended further by Damiani \cite{Dam} (see also \cite[Proposition 3.14]{BN}). 
\begin{Proposition}
  Let $i \in \bar I$. Except in the case of $A^{(2)}_{2n}$ and $i=n$, there is an explicitly defined injective algebra morphism 
  \begin{align}
   h_i: U_q(\asl_2) \rightarrow  U_q(\g).
  \end{align} 
In the case $A^{(2)}_{2n}$ and $i=n$, we instead have an injective algebra morphism  
  \begin{align}
   h_i: U_q(\ATT) \rightarrow  U_q(\g). 
  \end{align}
  \qed
\end{Proposition}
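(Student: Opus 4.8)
The plan is to construct $h_i$ on a Drinfeld-type (``loop'') presentation of $U_q(\g)$, verify it respects the defining relations, and then deduce injectivity from a PBW argument. Recall that Beck's work equips $U_q(\g)$ with a second presentation having generators $x^{\pm}_{j,k}$ ($j\in\bar I$, $k\in\ZZ$), imaginary generators $h_{j,l}$ ($l\neq 0$), Cartan elements $K_j^{\pm1}$, and a central element $C^{\pm1/2}$. Under this presentation the real root vectors $E^{\bar e}_{\beta}$ with $\bar\beta = \pm\bar\alpha_i$ agree, up to explicit nonzero scalars and powers of $K_i$, with the Drinfeld generators $x^{\pm}_{i,k}$ attached to the node $i$, and the $\tpsi_{i,kd_i}$ are, after the usual normalisation, the imaginary Drinfeld generators at $i$. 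By Corollary~\ref{cor:can-real} the vectors $E^{\bar w}_{kd_i\delta\mp\gamma_i}$ and $\IPsi{\bar w}{i,k}$ for a general coarse type are obtained from these by transporting through $\bar w$, so it suffices to treat the standard coarse type $\bar e$.

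First I would define $h_i$ on the generators of $U_q(\asl_2)$ (whose quantum parameter we take to be $q_i$) by sending the loop generators $x^{\pm}_k \mapsto x^{\pm}_{i,k}$, $h_l\mapsto h_{i,l}$, $K\mapsto K_i$, and $C^{1/2}\mapsto C^{1/2}$. One then checks that the defining Drinfeld relations of $U_q(\asl_2)$ map to valid identities in $U_q(\g)$: these are precisely the ``diagonal'' $(i,i)$ relations of the loop presentation of $U_q(\g)$, which by construction are the rank-one Drinfeld relations with parameter $q_i$. Hence $h_i$ is a well-defined algebra homomorphism. In the exceptional case $\g=\ATT$ with $i=n$ the factor $r_n=2$ forces the diagonal relations at node $n$ to be the twisted $A_2^{(2)}$ loop relations rather than the $\asl_2$ ones (this is Damiani's computation), and the same recipe produces $h_n:U_q(\ATT)\to U_q(\g)$.

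For injectivity I would argue via PBW bases. Beck--Nakajima's PBW basis of $U_q(\g)$ adapted to the standard convex order contains, among its ordered monomials, all products of the root vectors $E^{\bar e}_\beta$ with $\bar\beta=-\bar\alpha_i$, the imaginary vectors $S^{\bar e,\lambda}_i$, and the root vectors $E^{\bar e}_\beta$ with $\bar\beta=\bar\alpha_i$, together with the Cartan part $\{K_i^{\pm1},C^{\pm1/2}\}$; being part of a basis, these monomials are linearly independent. The image of $h_i$ is spanned exactly by such monomials, and $h_i$ carries the corresponding PBW basis of $U_q(\asl_2)$ (respectively of $U_q(\ATT)$) bijectively onto this spanning set, so $h_i$ is injective.

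The main obstacle is exactly the bookkeeping in the first two paragraphs: matching the convex-order root vectors $E^{\bar e}_\beta$ and the polynomials $S^{\bar e,\lambda}_i$ with Beck's and Damiani's Drinfeld generators, and invoking the PBW basis theorem for $U_q(\g)$ compatible with the standard convex order. Since we use this proposition only as a black box, the honest route is simply to quote it: all of this is carried out in \cite{Beck1,Dam} and reviewed in \cite[\S3]{BN} (in particular \cite[Proposition 3.14]{BN}).
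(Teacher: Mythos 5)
Your proposal is correct and takes the same route as the paper: the Proposition is stated with a \qed and no internal proof because it is quoted directly from the literature, the paper referring the reader to Beck \cite{Beck1}, Damiani \cite{Dam}, and especially \cite[Proposition~3.14]{BN} for the definitions. You correctly identified this and landed on the same conclusion — quote the references — and the sketch you give of what those references contain (define $h_i$ on the Drinfeld loop presentation by sending the node-$i$ Drinfeld generators to the node-$i$ Drinfeld generators of $U_q(\g)$, with quantum parameter $q_i$; observe the diagonal loop relations match, the exceptional $A^{(2)}_{2n}$, $i=n$ case yielding the $A_2^{(2)}$ relations; deduce injectivity because $h_i$ carries a PBW basis of the rank-two affine algebra onto a subset of a Beck--Nakajima PBW basis of $U_q(\g)$) is an accurate summary of the underlying argument. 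One small slip: in the exceptional case you wrote ``$\g=\ATT$,'' but the exceptional case of the Proposition is $\g = A^{(2)}_{2n}$ for arbitrary $n$ with $i=n$; the target algebra, not the source, is $U_q(\ATT)$.
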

See \cite[Proposition 3.14]{BN} for the precise definitions of these maps. We only need the following facts which follow directly from definitions (see \cite[Proposition and Corollary 3.8]{Beck1}).
\begin{Lemma}
  The maps $h_i$ respect the triangular decomposition and the integral structure. Moreover,
  \begin{align}
    \label{eq:3}
    h_i \circ T_1 = T_i \circ h_i.
  \end{align}
\end{Lemma}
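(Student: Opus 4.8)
The plan is to verify all three assertions directly from the explicit formulas defining $h_i$, as given in \cite[Proposition 3.14]{BN} (following \cite{Beck1, Dam}). Since $h_i$ is an algebra homomorphism, in each case it suffices to check the claim on the Chevalley generators of the source, which is $U_q(\asl_2)$ with index set $\{0,1\}$ (or $U_q(\ATT)$ in the exceptional twisted case), together with the Cartan part. All the $h_i$ are defined using the standard coarse type, so below $\gamma_i = \alpha_i$.

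\textbf{Triangular decomposition.} By inspection of the defining formulas, $h_i$ sends each positive Chevalley generator of the source to one of the real root vectors $E^{\bar e}_{\alpha_i}$ or $E^{\bar e}_{d_i\delta - \alpha_i}$ (up to a sign and a power of $q_i$), hence into $\Uplus$; it sends each negative Chevalley generator to the element of $U^-_q(\g)$ given by the analogous formula in the $F$'s; and it sends the source Cartan into the Cartan of $\U$ via $q^h \mapsto q^{h'}$ for an explicit coweight $h'$. Therefore $h_i$ carries $U^{\pm}_q(\asl_2)$ into $U^{\pm}_q(\g)$ and the Cartan into the Cartan, i.e.\ it respects the triangular decomposition.

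\textbf{Integral structure.} The images $h_i(E_j)$, $h_i(F_j)$ are, up to sign and a power of $q$, real root vectors for $\g$, and real root vectors together with their quantum divided powers lie in $\Uplusintegral$ (resp.\ its negative analogue): this is standard, since the braid operators $T_k$ preserve the $\mathscr A$-form and the divided powers $E_k^{(m)}$ are integral by definition. Hence $h_i$ maps the divided powers $E_j^{(m)}, F_j^{(m)}$ of the source into the integral form of $\U$, so it restricts to a morphism of integral forms.

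\textbf{The braid relation.} Since $T_1$ and $T_i$ are algebra automorphisms and $h_i$ is determined by its values on $E_0, E_1, F_0, F_1$ and the Cartan, it suffices to check $h_i(T_1(x)) = T_i(h_i(x))$ for $x$ each of these generators. One uses the explicit action of $T_1$ on $U_q(\asl_2)$, which on the real root vectors $E_{\pm\alpha_1 + m\delta}$ shifts $m$ by the appropriate amount and interchanges the role of $E$ and $F$ on the simple real roots, together with the analogous description of $T_i$ on the rank-one subsystem $\{\,\pm\alpha_i + m\delta\,\}\cup\{\,m\delta\,\}$ of $\Delta$, and matches the two side by side. In the exceptional case $A^{(2)}_{2n}$ with $i=n$ one repeats the same check with the $\ATT$-version of $T_1$, keeping track of the factor $r_i=2$ and the modified divided-power conventions. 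The main obstacle is precisely this last step: keeping the normalizations straight, i.e.\ matching Beck's (and Beck--Nakajima's) conventions for $T_1$ on $U_q(\asl_2)$ and on $U_q(\ATT)$ against Lusztig's $T_i = T''_{i,1}$ used here, including the signs and powers of $q_i$ produced by $E_i \mapsto -F_i q_i^{h_i}$ and the $i\neq j$ formulas. All of this reduces to the finite, explicit rank-one affine (resp.\ twisted affine) computation, so no new input beyond \cite[Proposition and Corollary 3.8]{Beck1} is needed.
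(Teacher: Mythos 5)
Your proposal is correct and matches the paper's approach: the paper gives no written proof, merely asserting that these facts ``follow directly from definitions'' and citing \cite[Proposition and Corollary 3.8]{Beck1}, which is precisely the generator-by-generator verification you spell out. Your acknowledgment that the only real work is reconciling Beck's normalizations for the rank-one braid operators with Lusztig's $T''_{i,1}$ conventions is the honest content of that citation.
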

\begin{Lemma}
Under the map $h_i$,   
\begin{align}
  h_i\left( E^{\bar e}_{k\delta \pm r_i\alpha_1} \right) =  E^{\bar e}_{k d_i\delta \pm r_i\alpha_i}, \\
  h_i\left( E^{\bar s_1}_{k\delta \pm r_i\alpha_1} \right) =  E^{\bar s_i}_{k d_i\delta \pm r_i\alpha_i}.
\end{align}
\end{Lemma}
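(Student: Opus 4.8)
The plan is to reduce both identities to a pair of base cases together with the intertwining relation $h_i\circ T_1=T_i\circ h_i$, using that $h_i$ is an injective algebra homomorphism respecting the triangular decomposition and the integral structure. Throughout, Corollary~\ref{cor:can-real} lets us compute each root vector in sight — on either side, in the rank-two algebra or in $U_q(\g)$ — from whatever one-row convex order of the prescribed coarse type is convenient, and we choose orders for which $\alpha_1$ (resp.\ $\alpha_i$) is extremal. For the base cases, the explicit formulas defining $h_i$ in \cite[Proposition 3.14]{BN} (see \cite[Proposition and Corollary 3.8]{Beck1}) give $h_i(E_1)=E_i$ and $h_i(E_0)=E^{\bar e}_{d_i\delta-r_i\alpha_i}$; that these hold on the nose — not merely up to a scalar, and not merely modulo lower terms in the relevant weight space — is exactly where ``$h_i$ respects the triangular decomposition and the integral structure'' is used, as is the matching of the quantum-integer normalization of the rank-two algebra with that of the $i$-th copy in $U_q(\g)$ (so that the ``$q_i^{-2}$'' in \eqref{eq:imformula} and Definition~\ref{def:im-root-vectors} agrees on both sides). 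Since $E_0=E^{\bar e}_{\delta-r_i\alpha_1}$ in the rank-two algebra ($\delta-r_i\alpha_1$ being minimal for any one-row order of coarse type $\bar e$, cf.\ Corollary~\ref{cor:simpleroots}), these are respectively the $k=0$ instance of the $+$ part and the minimal-$k$ instance of the $-$ part of the first identity.

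For coarse type $\bar e$ and $r_i=1$ (that is, for every $i$ except $(\g,i)=(A^{(2)}_{2n},n)$), the rank-two algebra is $U_q(\asl_2)$, whose real root vectors $E^{\bar e}_{k\delta\pm\alpha_1}$ are built from $E_0,E_1$ by Beck's procedure: one forms the pairwise-commuting imaginary vectors $\tpsi_{1,k}$ as in \eqref{eq:imformula}, and then the real root vectors are obtained from iterated $q_i$-commutators of the $\tpsi_{1,\bullet}$ with $E_{\pm\alpha_1}$, together with a single application of $T_1$ (to pass between the roots $\prec\delta$ and those $\succ\delta$). Because $h_i$ is an algebra homomorphism with $h_i(E_0)=E^{\bar e}_{d_i\delta-\alpha_i}$, $h_i(E_1)=E_i$, respecting the integral normalization and satisfying $h_i\circ T_1=T_i\circ h_i$, applying $h_i$ carries each of these formulas to the corresponding formula in $U_q(\g)$ with $\delta$ replaced by $d_i\delta$; by \cite[Corollary 3.8]{Beck1} (equivalently Definition~\ref{def:im-root-vectors}) this is precisely Beck's construction of $\IPsi{\bar e}{i,k}$ and of $E^{\bar e}_{kd_i\delta\pm\alpha_i}$. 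An induction on $k$ from the base cases then yields $h_i(E^{\bar e}_{k\delta\pm\alpha_1})=E^{\bar e}_{kd_i\delta\pm\alpha_i}$.

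For coarse type $\bar s_1$, fix a one-row order $\prec$ of coarse type $\bar s_1$ with $\alpha_1$ minimal, so that $\prec^{s_1}$ has coarse type $\bar e$ with $\alpha_1$ maximal. By the compatibility of $T_1$ with the operation $\prec\mapsto\prec^{s_1}$ on convex orders, there is $\varepsilon\in\{+1,-1\}$ with $E^\prec_\beta=T_1^{\varepsilon}(E^{\prec^{s_1}}_{s_1\beta})=T_1^{\varepsilon}(E^{\bar e}_{s_1\beta})$ for every real root $\beta\neq\alpha_1$ (the second equality by Corollary~\ref{cor:can-real}); in particular $E^{\bar s_1}_{k\delta\pm\alpha_1}=T_1^{\varepsilon}(E^{\bar e}_{k\delta\mp\alpha_1})$ for $k\geq1$ and $E^{\bar s_1}_{\alpha_1}=E_1$, and the identical statements hold in $U_q(\g)$ after replacing $\alpha_1$ by $\alpha_i$ and $\delta$ by $d_i\delta$. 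Applying $h_i$, using $h_i\circ T_1^{\varepsilon}=T_i^{\varepsilon}\circ h_i$, and invoking the coarse type $\bar e$ identity already proved, we conclude $h_i(E^{\bar s_1}_{k\delta\pm\alpha_1})=E^{\bar s_i}_{kd_i\delta\pm\alpha_i}$.

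The step I expect to be the main obstacle is the remaining case $(\g,i)=(A^{(2)}_{2n},n)$: there the source algebra is $U_q(\ATT)$ rather than $U_q(\asl_2)$, one has $r_i=2$, and Beck's recursion and the polynomials $P_\lambda$ take their exceptional form. The shape of the argument is unchanged — base cases from the definition of $h_i$, propagation by $h_i\circ T_1=T_i\circ h_i$ and the algebra structure — but the type-specific bookkeeping (roots of the form $k\delta\pm2\alpha_1$ occur only for odd $k$; the quantum-integer normalizations, with the $q$ of the rank-two algebra no longer equal to $q_i$; and the explicit reduced words realizing the relevant one-row orders of $A^{(2)}_2$) must be carried out by hand. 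This is exactly the content of \cite[Corollary 3.8]{Beck1}, which we invoke.
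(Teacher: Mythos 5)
The paper offers no proof of this lemma beyond the remark that it ``follows directly from definitions'' with a citation to [Beck1, Proposition and Corollary 3.8]; your proposal unpacks exactly what that citation supplies (base cases $h_i(E_1)=E_i$, $h_i(E_0)=E^{\bar e}_{d_i\delta-r_i\alpha_i}$ from the explicit definition of $h_i$, propagation via $h_i\circ T_1=T_i\circ h_i$ and Beck's commutator recursion, and the change of coarse type via $T_1$), and it ultimately invokes the same reference. This is essentially the same approach as the paper, just written out; the only small inaccuracy is the parenthetical claim that ``a single application of $T_1$'' passes between roots $\prec\delta$ and $\succ\delta$ within coarse type $\bar e$ (since $T_1$ changes the coarse type, Beck's construction instead uses a separate commutator recursion on each side of $\delta$), but this does not affect the substance since the argument rests on [Beck1, Corollary 3.8] in any case.
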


\begin{Theorem}{\label{thm:saito-imag}}
Fix $\bar w \in \bar W$, $j \in I$, and $i\in \bar I$. If $\alpha_j \prec_{\bar w} \delta$, then $\IPsi{\bar{s_j} \bar{w}}{ i,k}= T_{j} \IPsi{\bar{w}}{ i,k}$, and if $\alpha_j \succ_{\bar w} \delta$, then $\IPsi{\bar{s_j} \bar{w}}{ i,k}= T_{j}^{-1}\IPsi{\bar{w}}{ i,k}$. In particular, $\IPsi{\bar{w}}{ i,k} = T_w \IPsi{\bar{e}}{ i,k}$ where $w$ is the minimal length lift of $\bar{w}$ to $W$.

\end{Theorem}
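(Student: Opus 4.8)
The plan is to prove the statement by induction on the length of $\bar w$, reducing everything to the rank-one (more precisely, $U_q(\asl_2)$ or $U_q(\ATT)$) situation via the embeddings $h_i$, where the corresponding statement is already known. The key point is that once we establish the single-reflection recursion $\IPsi{\bar s_j \bar w}{i,k} = T_j^{\pm 1} \IPsi{\bar w}{i,k}$ (the sign depending on whether $\alpha_j \prec_{\bar w} \delta$ or $\alpha_j \succ_{\bar w} \delta$), the last sentence $\IPsi{\bar w}{i,k} = T_w \IPsi{\bar e}{i,k}$ follows immediately: take a reduced word $\bar w = \bar s_{j_1} \cdots \bar s_{j_\ell}$ for the minimal-length lift $w = s_{j_1} \cdots s_{j_\ell}$; at each stage $\bar s_{j_m} \cdots \bar s_{j_\ell}$ the simple root $\alpha_{j_{m-1}}$ lies on the appropriate side of $\delta$ for the coarse type $\bar s_{j_m} \cdots \bar s_{j_\ell}$ because $w$ is minimal-length (so each $s_{j_{m-1}}$ moves $\alpha_{j_{m-1}}$ from one side to the other in the standard way), and composing the single-reflection identities gives $T_{j_1} \cdots T_{j_\ell} \IPsi{\bar e}{i,k} = T_w \IPsi{\bar e}{i,k}$, using that $T_w = T_{j_1} \cdots T_{j_\ell}$ is well-defined via \eqref{eq:2}.

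So the work is in the single-reflection step. Here I would argue as follows. Fix a convex order $\prec$ of coarse type $\bar w$ with $\alpha_j \prec_{\bar w} \delta$ (the case $\alpha_j \succ_{\bar w} \delta$ is symmetric, or can be deduced by applying $*$ via Lemma \ref{lem:real*} together with $T_j^{-1} = * \circ T_j \circ *$). After possibly applying the operation $\prec \mapsto \prec^{s_j}$ and invoking Corollary \ref{cor:can-real} (so that $\IPsi{}{}$ and $S^{}{}$ are independent of the order within a coarse type), we may assume $\alpha_j$ is minimal for $\prec$, so that $\prec^{s_j}$ is an order of coarse type $\bar s_j \bar w$ with $\alpha_j$ maximal. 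By the definition of $E^\prec_\beta$ through the braid group action attached to the one-row order, applying $T_j$ to the root vectors $E^{\bar w}_{\gamma_i}$ and $E^{\bar w}_{kd_i \delta - \gamma_i}$ shifts the attached reduced word by one, which is exactly the passage from $\prec$ to $\prec^{s_j}$; concretely $T_j(E^{\bar w}_{\gamma_i}) = E^{\bar s_j \bar w}_{\gamma_i}$ and $T_j(E^{\bar w}_{kd_i\delta - \gamma_i}) = E^{\bar s_j \bar w}_{kd_i\delta - \gamma_i}$, where $\gamma_i$ remains the minimal affine root with $\bar \gamma_i = \bar w \bar \alpha_i$ (note $s_j$ fixes $\bar w \bar \alpha_i$ as a positive-side chamber weight appropriately, or more precisely the minimal imaginary shift is unaffected). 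Since $T_j$ is an algebra automorphism and $\IPsi{\bar w}{i,k}$ is a fixed $q$-linear combination of the two products $E^{\bar w}_{kd_i\delta-\gamma_i} E^{\bar w}_{\gamma_i}$ and $E^{\bar w}_{\gamma_i} E^{\bar w}_{kd_i\delta-\gamma_i}$ with coefficients $1, -q_i^{-2}$ independent of the order, applying $T_j$ term by term yields $T_j \IPsi{\bar w}{i,k} = \IPsi{\bar s_j \bar w}{i,k}$.

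The main obstacle I anticipate is the bookkeeping needed to verify that $T_j$ really does send the root vectors $E^{\bar w}_{\gamma_i}, E^{\bar w}_{kd_i\delta-\gamma_i}$ to the corresponding root vectors for coarse type $\bar s_j \bar w$, rather than to some other root vector or to an expression that only agrees up to lower-order terms. This requires pinning down how the infinite reduced word of a one-row order transforms under $\prec \mapsto \prec^{s_j}$ (a shift of indexing), checking that the definition $E^\prec_{\beta_k} = T_{i_1}\cdots T_{i_{k-1}} E_{i_k}$ is compatible with that shift, and handling the relationship between $\gamma_i$ for coarse type $\bar w$ versus $\bar s_j \bar w$ — in particular whether $\bar s_j$ fixes $\bar w \bar \alpha_i$ or sends it to another positive root, which affects which $\gamma_i$ appears; when $j \in \bar I$ and $\bar s_j \bar w \bar \alpha_i \neq \bar w \bar \alpha_i$ one must check the minimal lifts still match up under $T_j$. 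For the case $j = 0$, or more generally $\alpha_j \succ_{\bar w}\delta$, the argument is the mirror image using $T_j^{-1}$ and the "last roots" version of the root-vector definition. There may also be a special-case check for $A^{(2)}_{2n}$ with $i = n$ (the $r_i = 2$ case), where the relevant rank-one subalgebra is $U_q(\ATT)$ rather than $U_q(\asl_2)$ and the polynomial $P_\lambda$ differs; but since $\IPsi{}{}$ is built directly from root vectors and the braid relation \eqref{eq:3} holds uniformly, the same reduction should go through once one uses the $h_n: U_q(\ATT) \to U_q(\g)$ embedding in place of $h_i: U_q(\asl_2) \to U_q(\g)$.
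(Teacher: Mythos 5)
Your overall inductive scheme, and the deduction of the final sentence from the single-reflection recursion, are fine and match the paper. But your treatment of the single-reflection step has a gap. A preliminary issue: your asserted formula $T_j(E^{\bar w}_{\gamma_i}) = E^{\bar s_j\bar w}_{\gamma_i}$ is incorrect. When $\alpha_j$ is minimal for $\prec$, one has $T_j E^\prec_\beta = E^{\prec^{s_j}}_{s_j\beta}$ for $\beta \neq \alpha_j$, so the subscript changes, and one then needs $s_j\gamma_i$ to coincide with the minimal lift $\gamma'_i$ of $\bar s_j\bar w\bar\alpha_i$ that appears in the definition of $\IPsi{\bar s_j\bar w}{i,k}$. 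This does hold whenever $\bar s_j$ does not change the sign of $\bar w\bar\alpha_i$ (which in particular covers $j=0$), and that is the generic case.

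The genuine gap is the remaining case $\bar w\bar\alpha_i = \bar\alpha_j$, i.e.\ $\gamma_i = \alpha_j$, which you flag as ``bookkeeping'' but which is not. There $T_j(E^{\bar w}_{\gamma_i}) = T_j(E_j) = -F_j q_j^{h_j}$ lies outside $\Uplus$ entirely, so the termwise application of $T_j$ does not produce PBW root vectors; the fact that $T_j\IPsi{\bar w}{i,k}$ is nonetheless in $\Uplus$ rests on cancellation between the two summands that must be exhibited. The tempting fix of appealing to the alternative expression in Proposition~\ref{prop:alternative-psi-formula} is circular, as the general case of that proposition is itself proved using the theorem under discussion. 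The paper handles this case by a different route: set $v = \bar s_j\bar w\bar s_i^{-1}$, use the inductive hypothesis $\IPsi{\bar w}{i,k} = T_w\IPsi{\bar e}{i,k}$ to write $T_j\IPsi{\bar w}{i,k} = T_vT_i\IPsi{\bar e}{i,k} = T_v\IPsi{\bar s_i}{i,k}$ (the last step being the base case, which is where $h_i$ and the rank-two computation from \cite{MT} enter), and then verify directly that $T_v$ carries the $\bar s_i$ root vectors onto the $\bar s_j\bar w$ root vectors. Note also that your stated plan of ``reducing everything'' to rank one via the $h_i$ overstates their role: they only reduce the $i=j$ piece of the base case; the $i \neq j$ base case and the exceptional inductive step just described are not in the image of any single $h_i$ and must be handled directly.
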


\begin{proof} 
Proceed by induction on $\ell(\bar w)$. For the base case $\bar w=\bar e$ there are two possibilities:
\begin{itemize}

\item  If $i \neq j$, this follows because the real root vectors defining $\IPsi{\bar s_j}{ i ,k}$ are obtained exactly by applying $T_j$ to the real root vectors defining $\IPsi{\bar e}{i,k}$. 

\item If $i=j$, then  both $\IPsi{\bar e}{i,k}$ and $ \IPsi{\bar s_j}{ i ,k}$ lie in the image of $h_j$. Thus using Lemma \ref{thm:saito-imag}, we are reduced to the corresponding fact in affine rank-2 (either $\asl_2$ or $\ATT$) where it is known (see \cite[Lemma 4.4]{MT}). 
\end{itemize} 

So assume $\ell(\bar w) \geq 1$. We consider the case when $\alpha_j \prec_{\bar w} \delta$; the other case is similar. Let $\prec$ be an order of coarse type $\bar{w}$ whose least root is $\alpha_j$. Then for all positive real roots $\beta$ not equal to $\alpha_j$, we have $T_j (E^\prec_\beta) = E^{\prec^{s_j}}_{s_j(\beta)}$. We can use this, except in the case $\bar{w} \alpha_i = \alpha_j$, to obtain the result directly from the definition of $\IPsi{\bar{s_i} \bar{w}}{ i,k}$. This in particular covers the case when $j=0$. 

Also, since we can write $w = s_h u$ for some $h \in  \bar I$ and $u \in \overline{W}$ with $\ell(u) = \ell(w) - 1$, we can assume by induction that $\IPsi{\bar{w}}{ i,k} =T_w \IPsi{\bar{e}}{ i,k} $. Here $T_w$ corresponds to the minimal-length lift of $w$ to the affine Weyl group. 

Now consider the remaining case when $\bar{w} \alpha_i = \alpha_j$. We have $\ell(\bar w \bar s_i) = \ell(\bar w) +1$, $\ell(\bar s_j \bar w) = \ell(\bar w)+1$. Let $v = \bar s_j \bar w \bar s_i^{-1}$, then we also have $\ell(v \bar s_i) = \ell(v)+1$. So,
\begin{align}
T_j \IPsi{\bar{w}}{ i,k}  = T_jT_w \IPsi{\bar{e}}{ i,k} = T_v T_i \IPsi{\bar{e}}{ i,k} = T_v  \IPsi{\bar{s_i}}{ i ,k},
\end{align}
where the last equality is from the base case. 
Noting that $T_v E^{\bar{s_i}}_{\alpha_i + n\delta} = E^{\bar{v}\bar{s_i}}_{v(\alpha_i) + n\delta}$ for $n \geq 0$, and $T_v E^{\bar{s_i}}_{-\alpha_i + n\delta} = E^{\bar{v}\bar{s_i}}_{-v(\alpha_i) + n\delta}$ for $n >0$, we conclude that
\begin{align} 
T_v  \IPsi{\bar{s_i}}{ i ,k} = \IPsi{\bar{v} \bar{s_i}}{i,k} = \IPsi{\bar{s_j} \bar{w}}{i,k}.
\end{align}

\end{proof}

\begin{Corollary} \label{cor:im-sym}
Fix $\bar w \in \bar W$ and $j \in I$, and let $i \in \bar I$ be arbitrary. 
If $\alpha_j \prec_{\bar w} \delta$, then $S_{i}^{\bar{s_j} \bar{w},\lambda}= T_{j} S_{ i}^{\bar w,\lambda}.$
If $\alpha_j \succ_{\bar w} \delta$, then $S_{i}^{\bar{s_j} \bar{w},\lambda}= T_{j}^{-1} S_{ i}^{\bar w,\lambda}.$
\qed
\end{Corollary}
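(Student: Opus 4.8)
The plan is to deduce this immediately from Theorem \ref{thm:saito-imag}, using the fact that by Definition \ref{def:im-root-vectors} each $S_i^{\bar w,\lambda}$ is a fixed polynomial $P_\lambda$ — with coefficients in $\QQ(q_s)$ and a number of variables $N(\lambda)$ depending only on $\lambda$ and the type of $\g$, not on $\bar w$ — evaluated on the pairwise commuting imaginary root vectors $\IPsi{\bar w}{i,1},\ldots,\IPsi{\bar w}{i,N(\lambda)}$.

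First I would recall that $T_j$, and hence also $T_j^{-1}$, is a $\QQ(q_s)$-algebra automorphism of $U_q(\g)$. Consequently, for any polynomial $P$ in $m$ commuting variables with coefficients in $\QQ(q_s)$ and any elements $x_1,\ldots,x_m \in U_q(\g)$ that pairwise commute, one has $T_j\bigl(P(x_1,\ldots,x_m)\bigr) = P\bigl(T_j(x_1),\ldots,T_j(x_m)\bigr)$, since $T_j$ preserves sums, products, and scalars.

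Then I would apply this with $P = P_\lambda$, $m = N(\lambda)$, and $x_k = \IPsi{\bar w}{i,k}$. In the case $\alpha_j \prec_{\bar w} \delta$, Theorem \ref{thm:saito-imag} gives $T_j\IPsi{\bar w}{i,k} = \IPsi{\bar{s_j}\bar{w}}{i,k}$ for every $k$, so
\[
T_j S_i^{\bar w,\lambda} = T_j\, P_\lambda\bigl(\IPsi{\bar w}{i,1},\ldots,\IPsi{\bar w}{i,N(\lambda)}\bigr) = P_\lambda\bigl(\IPsi{\bar{s_j}\bar{w}}{i,1},\ldots,\IPsi{\bar{s_j}\bar{w}}{i,N(\lambda)}\bigr) = S_i^{\bar{s_j}\bar{w},\lambda},
\]
which is the first claim. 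The case $\alpha_j \succ_{\bar w} \delta$ is verbatim the same with $T_j$ replaced throughout by $T_j^{-1}$, invoking the second half of Theorem \ref{thm:saito-imag}.

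Since every step is formal, there is essentially no obstacle here; the only point requiring any care — and it is already built into Definition \ref{def:im-root-vectors} — is that the polynomial $P_\lambda$ and the integer $N(\lambda)$ are the same on both sides of the identity, so that the transformation is governed entirely by what $T_j^{\pm 1}$ does to the individual $\IPsi{\bar w}{i,k}$, which is exactly the content of Theorem \ref{thm:saito-imag}.
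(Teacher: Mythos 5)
Your proposal is correct and is exactly the argument the paper intends: the corollary is stated with a \qed because it follows immediately from Theorem \ref{thm:saito-imag} together with the fact that $T_j^{\pm 1}$ is an algebra automorphism and $S_i^{\bar w,\lambda}$ is by Definition \ref{def:im-root-vectors} the fixed polynomial $P_\lambda$ evaluated at the commuting elements $\IPsi{\bar w}{i,k}$. Nothing is missing.
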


\begin{Proposition} \label{prop:alternative-psi-formula}
Let  $\bar w$ be a coarse type, let $i \in \bar I$, and let $\gamma_i$ be as in Definition \ref{def:im-root-vectors}.  Let $\ell>0, m \geq 0$ be integers. Let $k = \ell + m$. Then
\begin{align}
  \IPsi{\bar w}{ i,k}= E^{\bar{w}}_{\ell d_i\delta-\gamma_i}E^{\bar{w}}_{m d_i\delta +{\gamma_i}}  - q_i^{-2} E^{\bar{w}}_{m d_i\delta +{\gamma_i}}E^{\bar{w}}_{\ell d_i\delta-\gamma_i}.
\end{align}
\end{Proposition}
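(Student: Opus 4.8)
The plan is to reduce everything to the rank-two situation via the morphism $h_i$ and Theorem \ref{thm:saito-imag}, then handle the rank-two identity directly. First I would dispose of the coarse-type dependence: by Corollary \ref{cor:im-sym} (applied to a reduced word for $w$, the minimal-length lift of $\bar w$) it suffices to prove the identity for $\bar w = \bar e$, since the braid operators $T_j$ are algebra automorphisms carrying $E^{\bar w}_{\ell d_i\delta-\gamma_i}$, $E^{\bar w}_{m d_i\delta+\gamma_i}$ and $\IPsi{\bar w}{i,k}$ to the corresponding vectors of coarse type $\bar s_j \bar w$ (using the same transport-of-root-vectors statements invoked in the proof of Theorem \ref{thm:saito-imag}). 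So I am reduced to showing, for $\bar e$,
\begin{align*}
\tpsi_{i,kd_i} = E^{\bar e}_{\ell d_i\delta - \alpha_i} E^{\bar e}_{m d_i\delta + \alpha_i} - q_i^{-2} E^{\bar e}_{m d_i\delta + \alpha_i} E^{\bar e}_{\ell d_i\delta - \alpha_i}, \qquad \ell + m = k,\ \ell > 0,\ m \geq 0.
\end{align*}

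Next I would push this through $h_i$. Since $h_i$ respects the triangular decomposition and the integral structure, and since $h_i(E^{\bar e}_{k'\delta \pm r_i\alpha_1}) = E^{\bar e}_{k'd_i\delta \pm r_i\alpha_i}$, an identity of this shape for $U_q(\asl_2)$ (or $U_q(\ATT)$ when $i=n$ in type $A^{(2)}_{2n}$) pulls back to the one I want — modulo keeping careful track of the factor $r_i$, which is why the statement is phrased with $d_i\delta$ rather than $\delta$. Concretely, the case $m=0$ is exactly the defining formula \eqref{eq:imformula}/Definition \ref{def:im-root-vectors} for $\IPsi{\bar e}{i,k}$, so there is nothing to prove there; the content is the general $m \geq 1$ case, where the claim is that splitting $k\delta$ asymmetrically as $\ell d_i\delta$ above and $m d_i\delta$ below still computes the same imaginary vector.

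The real work is therefore the rank-two identity, and I expect that to be the main obstacle. The cleanest route is an induction on $m$: show that
\begin{align*}
E_{\ell\delta - \alpha} E_{m\delta + \alpha} - q^{-2} E_{m\delta + \alpha} E_{\ell\delta - \alpha} = E_{(\ell+1)\delta - \alpha} E_{(m-1)\delta + \alpha} - q^{-2} E_{(m-1)\delta + \alpha} E_{(\ell+1)\delta - \alpha}
\end{align*}
in $U_q(\asl_2)^+$ (and the $\ATT$ analogue), which combined with the base case $m=0$ gives the result. This step amounts to a Serre-type/Drinfeld-realization computation among the affine $\asl_2$ root vectors $E_{k\delta \pm \alpha}$ — exactly the kind of commutation relations established by Beck \cite{Beck1} and Damiani \cite{Dam}, and already used in \cite{MT, BN, BCP}; the identity should follow from the known relations $[E_{k\delta - \alpha}, E_{k'\delta + \alpha}]$ expressed via the $\tpsi$'s, together with the fact that the $\tpsi_{i,\bullet}$ commute. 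I would either cite the relevant relation from \cite{Beck1}/\cite[Section 3]{BN} and do the short manipulation, or invoke \cite[Lemma 4.4]{MT} in rank two as in the proof of Theorem \ref{thm:saito-imag}. The only genuine subtlety is the twisted case $A^{(2)}_{2n}$, $i=n$, where one works in $U_q(\ATT)$ and must carry the $r_i = 2$ factors through $h_i$ consistently; but since $P_\lambda$ and the defining formula for $\IPsi{\bar w}{i,k}$ were already set up to absorb this, no new phenomenon arises.
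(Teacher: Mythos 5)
Your reduction to coarse type $\bar e$ via Theorem \ref{thm:saito-imag} (equivalently Corollary \ref{cor:im-sym}) is exactly what the paper does. Where you diverge is the base case: the paper disposes of $\bar w = \bar e$ in one line by citing \cite[Proposition 1.2]{BCP} for the untwisted types and \cite[Proposition 3.26(2)]{Aka} for $A_{2n}^{(2)}$, which give precisely the asymmetric-split formula for the $\tilde\psi$ vectors. You instead propose to push further through $h_i$ into $U_q(\asl_2)$ or $U_q(\ATT)$ and then establish the rank-two identity by induction on $m$. That route is viable and has the merit of being more self-contained, but note two things. First, the heavy lifting you defer — the commutation relation that makes the $(\ell,m)\mapsto(\ell+1,m-1)$ step work — is essentially the content of the cited BCP and Akasaka propositions anyway, so you would end up reproving or re-citing material that the paper invokes directly; if you go this way, \cite{Dam} and \cite[Lemma 1.5]{BCP} are the right places to pull the relations from. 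Second, your fallback citation of \cite[Lemma 4.4]{MT} is slightly off target: in this paper that lemma is invoked (inside the proof of Theorem \ref{thm:saito-imag}) for the braid-compatibility of the rank-two imaginary vectors, $\IPsi{\bar s_i}{i,k} = T_i\IPsi{\bar e}{i,k}$, not for the alternative $\ell+m$ splitting, so it does not directly deliver the identity you need. Your reduction through $h_i$ is otherwise clean (the injectivity of $h_i$ and the explicit images of the $E^{\bar e}_{k\delta\pm r_i\alpha_1}$ make it legitimate), and you are right to flag the $r_i=2$ bookkeeping in the $A_{2n}^{(2)}$, $i=n$ case as the one place that needs care.
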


\begin{proof}
The case $\bar w=\bar e$ holds by \cite[Proposition 1.2]{BCP} and \cite[Proposition 3.26 (2)]{Aka}. The general case follows by Theorem \ref{thm:saito-imag}.
\end{proof}

\begin{Definition}
Let $*$ be the involution on $\bar I$ defined by $\alpha_{i*} = -{\bar w_0} (\alpha_i)$. 
\end{Definition}

\begin{Proposition}{\label{prop:star-imag}}
Let  $\bar w$ be a coarse type, let $i \in \bar I$, and let $\lambda$ be a partition. Then
\begin{align}
(S^{\bar w,  \lambda}_i)^*= S^{\bar w_0 \bar w,  \lambda}_{i^*}
\end{align}
\end{Proposition}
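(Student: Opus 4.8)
The plan is to reduce the identity to a statement about the imaginary root vectors $\IPsi{\bar w}{i,k}$ and then read it off from the explicit formulas available for those. First I would use that $*$ is a $\QQ(q_s)$-linear algebra anti-involution, together with the fact that for a fixed coarse type $\bar w$ and fixed $i\in\bar I$ the vectors $\IPsi{\bar w}{i,1},\IPsi{\bar w}{i,2},\ldots$ commute pairwise — for $\bar w=\bar e$ this is part of their construction in \cite{BCP}, and in general it follows from $\IPsi{\bar w}{i,k}=T_w\IPsi{\bar e}{i,k}$ (Theorem \ref{thm:saito-imag}) since $T_w$ is an algebra automorphism. It then follows that applying $*$ to the polynomial $P_\lambda$ in the $\IPsi{\bar w}{i,k}$ simply substitutes the images $(\IPsi{\bar w}{i,k})^*$ (which again commute pairwise, being $T_w$-twists of the same family), so $(S_i^{\bar w,\lambda})^*=P_\lambda\big((\IPsi{\bar w}{i,1})^*,(\IPsi{\bar w}{i,2})^*,\ldots\big)$. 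Since $P_\lambda$ is independent of $i$ except when $\fg=A^{(2)}_{2n}$ and $i=n$ — and in that case $\bar w_0=-1$, so $n^*=n$ and the same $P_\lambda$ occurs on both sides — the problem reduces to proving
\[
(\IPsi{\bar w}{i,k})^* = \IPsi{\bar w_0\bar w}{i^*,k}\qquad\text{for all }k\geq1 .
\]

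Next I would apply $*$ directly to the formula of Definition \ref{def:im-root-vectors}. Using that $*$ is a $\QQ(q_s)$-linear anti-involution and Lemma \ref{lem:real*} ($(E_\beta^\prec)^*=E_\beta^{\prec^*}$), one obtains, for $\prec$ any convex order of coarse type $\bar w$,
\[
(\IPsi{\bar w}{i,k})^* = E_{\gamma_i}^{\prec^*}\,E_{kd_i\delta-\gamma_i}^{\prec^*} - q_i^{-2}\,E_{kd_i\delta-\gamma_i}^{\prec^*}\,E_{\gamma_i}^{\prec^*} ,
\]
where the anti-involution has reversed the order of the two real root vectors. A short check from the definition of coarse type shows that $\prec^*$ has coarse type $\bar w_0\bar w$: for $\bar w=\bar e$ this is immediate, since reversing the standard order interchanges the two sides of $\delta$ and trades the side of the positive classical roots for that of the negative ones, and the general case is identical. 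Thus, writing $\bar v:=\bar w_0\bar w$, this reads $(\IPsi{\bar w}{i,k})^* = E_{\gamma_i}^{\bar v}\,E_{kd_i\delta-\gamma_i}^{\bar v} - q_i^{-2}\,E_{kd_i\delta-\gamma_i}^{\bar v}\,E_{\gamma_i}^{\bar v}$.

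Then I would identify this with $\IPsi{\bar v}{i^*,k}$. Let $\gamma'_{i^*}$ be the minimal affine root with $\overline{\gamma'_{i^*}}=\bar v\,\bar\alpha_{i^*}$, as in the definition of $\IPsi{\bar v}{i^*,k}$. I would verify that $\bar v\,\bar\alpha_{i^*}=-\overline{\gamma_i}$, that $d_{i^*}=d_i$ and $q_{i^*}=q_i$ (the symmetry $*$ on $\bar I$ preserves root lengths), and hence that $\gamma_i=d_{i^*}\delta-\gamma'_{i^*}$ and $kd_i\delta-\gamma_i=(k-1)d_{i^*}\delta+\gamma'_{i^*}$. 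Feeding $\ell=1$, $m=k-1$ into Proposition \ref{prop:alternative-psi-formula} for the pair $(\bar v,i^*)$ then gives
\[
\IPsi{\bar v}{i^*,k} = E_{d_{i^*}\delta-\gamma'_{i^*}}^{\bar v}\,E_{(k-1)d_{i^*}\delta+\gamma'_{i^*}}^{\bar v} - q_{i^*}^{-2}\,E_{(k-1)d_{i^*}\delta+\gamma'_{i^*}}^{\bar v}\,E_{d_{i^*}\delta-\gamma'_{i^*}}^{\bar v} = (\IPsi{\bar w}{i,k})^* ,
\]
as required. Alternatively, one could bypass this step by combining $\IPsi{\bar w}{i,k}=T_w\IPsi{\bar e}{i,k}$ with $T_i^{-1}=*\,T_i\,*$ and length bookkeeping based on $\bar w_0$ being the longest element (in the spirit of Corollary \ref{cor:im-sym}) to reduce to the case $\bar w=\bar e$, where $(\tpsi_{i,kd_i})^*=\IPsi{\bar w_0}{i^*,k}$ follows directly from \eqref{eq:imformula}, Corollary \ref{cor:simpleroots} (so that $E_{\alpha_i}=E_i$ is $*$-fixed), and Proposition \ref{prop:alternative-psi-formula}.

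The only nonroutine ingredient is the third step: determining the coarse type of $\prec^*$ precisely and verifying that the minimal affine root $\gamma'_{i^*}$ attached to $(\bar v,i^*)$ is exactly $d_{i^*}\delta-\gamma_i$. I expect this to require a case check over the twisted affine types — where $d_i$ may equal $2$, the root $\gamma_i$ need not be classical, and in type $A^{(2)}_{2n}$ with $i=n$ the factor $r_i=2$ enters — to confirm that in every case the small root for $(\bar v,i^*)$ is produced from $\gamma_i$ as claimed. Everything else is a formal consequence of the anti-involution property together with the cited results.
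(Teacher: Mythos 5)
Your argument is essentially the paper's own proof: both reduce to the identity $(\IPsi{\bar w}{i,k})^* = \IPsi{\bar w_0\bar w}{i^*,k}$ and establish it by combining Lemma \ref{lem:real*} (with the anti-involution reversing the product) and Proposition \ref{prop:alternative-psi-formula}; the only difference is that the paper applies the alternative formula before taking $*$ while you apply it after, which is immaterial. Your extra care about the commutativity of the $\IPsi{\bar w}{i,k}$ (so that $*$ passes through $P_\lambda$), about $i^*=n^*=n$ in type $A^{(2)}_{2n}$, and about identifying the coarse type of $\prec^*$ and the root $\gamma'_{i^*}$ makes explicit bookkeeping that the paper leaves implicit.
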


\begin{proof}
It suffices to show the same thing for the $\IPsi{\bar w}{ i,k}$.  Let $\gamma_i$ be as in Definition \ref{def:im-root-vectors}. Then  
\begin{align}
(\IPsi{\bar w}{ i,k})^* & = (E^{\prec}_{k d_i\delta - \gamma_i} E^{\prec}_{\gamma_i} - q_i^{-2}E^{\prec}_{\gamma_i} E^{\prec}_{k d_i\delta-\gamma_i } )^* \\
\label{eq:dd} & = (E^{\prec}_{\delta - \gamma_i} E^{\prec}_{(k-1) d_i\delta+\gamma_i} - q_i^{-2}E^{\prec}_{(k-1) d_i\delta+\gamma_i} E^{\prec}_{\delta-\gamma_i } )^* \\
& \label{eq:*in} = E^{\prec^*}_{k d_i\delta-(\delta- \gamma_i)} E^{\prec^*}_{\delta - \gamma_i} - q_i^{-2}E^{\prec^*}_{\delta-\gamma_i } E^{\prec^*}_{k d_i\delta-(\delta- \gamma_i)} = \IPsi{\bar w_0 \bar w}{ i^*,k}.
\end{align}
Here \eqref{eq:dd} follows from Proposition \ref{prop:alternative-psi-formula} and \eqref{eq:*in} follows from Lemma \ref{lem:real*}.
\end{proof}

\subsection{PBW bases}

Let $\prec$ be a convex order and $\cc$ a Lusztig datum. Define the corresponding PBW basis element by
\begin{align}
L(\cc,\prec) = E_{\beta_1}^{\prec,(\cc_{\beta_1})} \cdots  E_{\beta_N}^{\prec,(\cc_{\beta_N})}  S^{\prec,\cc_\delta} E_{\gamma_M}^{\prec,(\cc_{\gamma_M})} \cdots  E_{\gamma_1}^{\prec,(\cc_{\gamma_1})},
\end{align}
where $\beta_1 \prec \cdots \prec \beta_N \prec \delta \prec \gamma_M \prec \cdots \prec \gamma_1$ are the real roots for which $c_{\beta} \neq 0$.

\begin{Proposition} \label{prop:cCS}

Let $\prec$ be a convex order, and let $i_\text{left}, i_\text{right} \in I$ be the vertices in Dynkin diagram corresponding to the least and greatest roots of the order $\prec$ respectively. 

\begin{enumerate}
\item \label{el1} Let $\cc^\prime$ be the Lusztig datum such that $\cc^\prime_{\alpha_{i_\text{left}}}= \cc_{\alpha_{i_\text{left}}}+1$ and otherwise agrees with $\cc$. Then $\widetilde{ E}_{i_\text{left}} L(\cc,\prec) = L(\cc^\prime,\prec)$.
\item \label{el2} Let $\cc^\prime$ be the Lusztig datum such that $\cc^\prime_{\alpha_{i_\text{right}}}= \cc_{\alpha_{i_\text{right}}}+1$ and otherwise agrees with $\cc$. Then $\widetilde{ E}^*_{i_\text{right}} L(\cc,\prec) = L(\cc^\prime,\prec)$.
\item \label{el3} Suppose  $\cc_{\alpha_{i_\text{left}}}=0$. Then $T_{i_\text{left}} L(\cc,\prec) =L(\cc \circ s_i, \prec^{s_i})$
\item \label{el4} Suppose  $\cc_{\alpha_{i_\text{right}}}=0$. Then $T^{-1}_{i_\text{right}} L(\cc,\prec) = L(\cc \circ s_i, \prec^{s_i})$
\item \label{el5} $L(\cc,\prec)^* = L(\cc,\prec^*)$.
\end{enumerate}
Here $\cc \circ s_i$ is the Lusztig datum defined by $(\cc \circ s_i)_\beta = s_i(\beta)$ for $\beta \neq \alpha_i$ and $(\cc \circ s_i)_{\alpha_i} = 0$.
\end{Proposition}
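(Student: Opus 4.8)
The plan is to establish the five parts largely by reducing to known facts about PBW bases in the finite/one-row setting, using the definitions and the braid group compatibilities already set up. First I would note that parts \eqref{el1} and \eqref{el2} are the ``adding a simple root at the extremal end'' statements. For \eqref{el1}: since $\alpha_{i_{\text{left}}}$ is minimal for $\prec$, by Corollary \ref{cor:simpleroots} the corresponding real root vector is $E_{i_{\text{left}}}$, so $L(\cc,\prec) = E_{i_{\text{left}}}^{(\cc_{\alpha_{i_{\text{left}}}})} \cdot L'$, where $L'$ is a product of root vectors for roots strictly greater than $\alpha_{i_{\text{left}}}$. One then checks that left-multiplication by $\widetilde E_{i_{\text{left}}}$ (Kashiwara's operator, which on $\cL/q^{-1}\cL$ is essentially divided-power multiplication on the left) sends $E_{i_{\text{left}}}^{(m)} L'$ to $E_{i_{\text{left}}}^{(m+1)} L'$; this is the standard fact that, modulo $q^{-1}\cL$, $\widetilde E_i$ acts by increasing the power of $E_i$ when the vector already starts with the maximal power of $E_i$ it can (here the tail $L'$ has $\eps_{i_{\text{left}}} = 0$ since no root $\succ \alpha_{i_{\text{left}}}$ contributes $\alpha_{i_{\text{left}}}$-weight in the relevant way — more precisely this is the content of the PBW basis being a crystal basis adapted to $\prec$). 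Part \eqref{el2} is the mirror statement, obtained by applying $*$: by \eqref{el5} (proved below) $L(\cc,\prec)^* = L(\cc,\prec^*)$, $\alpha_{i_{\text{right}}}$ is minimal for $\prec^*$, $\widetilde E^*_{i_{\text{right}}} = * \widetilde E_{i_{\text{right}}} *$, so \eqref{el2} follows from \eqref{el1} applied to $\prec^*$.

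Next, for parts \eqref{el3} and \eqref{el4}: assume $\cc_{\alpha_{i_{\text{left}}}} = 0$, so $L(\cc,\prec)$ is a product over roots $\beta \succ \alpha_{i_{\text{left}}}$ of divided powers of $E_\beta^\prec$ (together with the imaginary factor $S^{\prec,\cc_\delta}$). The key computation is that $T_{i_{\text{left}}}$ applied to each such factor gives the corresponding factor for $\prec^{s_i}$: for a real root $\beta \neq \alpha_{i_{\text{left}}}$ with $\beta \succ \alpha_{i_{\text{left}}}$, by the very definition of the real root vectors for a one-row refinement and the braid relations, $T_{i_{\text{left}}}(E_\beta^\prec) = E_{s_i\beta}^{\prec^{s_i}}$ (this is exactly the relation used repeatedly in the proof of Theorem \ref{thm:saito-imag}, and divided powers are preserved since $T_i$ is an algebra automorphism respecting the integral form); for the imaginary factor, $T_{i_{\text{left}}}(S^{\prec,\underline\lambda}) = S^{\prec^{s_i},\underline\lambda}$ by Corollary \ref{cor:im-sym}. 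Since $T_{i_{\text{left}}}$ is an algebra homomorphism it carries the whole product to $L(\cc\circ s_i, \prec^{s_i})$, the exponents being reindexed exactly by $\beta \mapsto s_i\beta$, which is precisely the definition of $\cc \circ s_i$. Part \eqref{el4} follows from \eqref{el3} by applying $*$ and using $T_i^{-1} = * T_i *$ (Lemma \ref{lem:real*}'s proof) together with \eqref{el5}: $T^{-1}_{i_{\text{right}}} L(\cc,\prec) = (T_{i_{\text{right}}}(L(\cc,\prec)^*))^* = (T_{i_{\text{right}}} L(\cc,\prec^*))^* $, and $\alpha_{i_{\text{right}}}$ is leftmost for $\prec^*$, so \eqref{el3} applies.

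For part \eqref{el5}: apply Kashiwara's involution $*$, which is an algebra anti-involution. Thus $L(\cc,\prec)^* = (E^{(\cc_{\gamma_1})}_{\gamma_1})^* \cdots (E^{(\cc_{\gamma_M})}_{\gamma_M})^* (S^{\prec,\cc_\delta})^* (E^{(\cc_{\beta_N})}_{\beta_N})^* \cdots (E^{(\cc_{\beta_1})}_{\beta_1})^*$ with the order of factors reversed. By Lemma \ref{lem:real*}, $(E^\prec_\beta)^* = E^{\prec^*}_\beta$, hence $(E^{\prec,(m)}_\beta)^* = E^{\prec^*,(m)}_\beta$ since $*$ fixes quantum divided powers (it fixes the Chevalley generators and the scalars in $\ZZ[q_s,q_s^{-1}]$). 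For the imaginary part, $(S^{\prec,\cc_\delta})^* = \prod_{i} (S^{\bar w,\cc_\delta^{(i)}}_i)^* = \prod_i S^{\bar w_0\bar w, \cc_\delta^{(i)}}_{i^*}$ by Proposition \ref{prop:star-imag}, and $\bar w_0 \bar w$ is exactly the coarse type of $\prec^*$, with the relabeling $i \mapsto i^*$ matching how the multipartition components are indexed for the reversed order; thus $(S^{\prec,\cc_\delta})^* = S^{\prec^*,\cc_\delta}$. Now in $L(\cc,\prec^*)$ the real roots appear in the order reversed relative to $\prec$, which is exactly the order of the starred factors above, so the two products coincide: $L(\cc,\prec)^* = L(\cc,\prec^*)$. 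The main obstacle I anticipate is \eqref{el1}: pinning down precisely why left-multiplication by $\widetilde E_{i_{\text{left}}}$ acts on the PBW monomial by bumping up the leftmost exponent — this requires knowing that $L(\cc,\prec) \bmod q^{-1}\cL$ is a crystal basis element whose $\eps_{i_{\text{left}}}$ equals $\cc_{\alpha_{i_{\text{left}}}}$, i.e. that the PBW basis for $\prec$ is ``adapted'' to the simple root at the left end. In the one-row case this is in Beck–Nakajima; for general convex orders it should follow from Corollary \ref{cor:onS+} by approximating $\prec$ by a one-row order agreeing with it on all roots of bounded height, since everything in sight (the monomial, its image under $\widetilde E_{i_{\text{left}}}$, and $\eps_{i_{\text{left}}}$) only depends on that finite data.
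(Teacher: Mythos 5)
Your proposal is correct and follows essentially the same route as the paper: reduce to one-row orders by approximation, get \eqref{el1} from the factorization $L(\cc,\prec)=E_{i_{\text{left}}}^{(\cc_{\alpha_{i_{\text{left}}}})}L'$ and the definition of $\widetilde E_{i_{\text{left}}}$, get \eqref{el3} from $T_{i_{\text{left}}}(E^\prec_\beta)=E^{\prec^{s_i}}_{s_i\beta}$ together with Corollary \ref{cor:im-sym}, get \eqref{el5} from the anti-involution property plus Lemma \ref{lem:real*} and Proposition \ref{prop:star-imag}, and deduce \eqref{el2}, \eqref{el4} by applying $*$. The only difference is that you spell out the crystal-lattice justification for \eqref{el1} that the paper leaves implicit (the tail $L'$ lying in $\ker e_i'$ by Proposition \ref{prop:lop} with $w=s_{i_{\text{left}}}$, which makes the equality exact, not just mod $q_s^{-1}\cL$).
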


\begin{proof}
Fix $N >0$. We can find a one-row order $\prec^\prime$ such that $L(\cc,\prec)= L(\cc,\prec^\prime)$ for all $\cc$ such that $\wt (\cc)$ has height below $N$. Since for fixed $\cc$ each statement involves only at most two weight spaces of $U^+$, without loss of generality we may assume that $\prec$ is a one-row order.

Statement \eqref{el1} follows by the definition of $\widetilde E_i$. Statement \eqref{el3} follows from the fact that $T_{i_\text{left}} S^{\prec,\cc_\delta}  = S^{\prec^{s_{i_\text{left}}}, \cc_\delta}$. The part of statement \eqref{el5}  involving real root vectors follows from the fact that $* \circ T_i \circ * = T^{-1}_i$ for all $i$; the part involving imaginary root vectors is Proposition \ref{prop:star-imag}. Putting these facts together using the fact that $*$ is an anti-involution, we get statement \eqref{el5}. Statements \eqref{el2} and \eqref{el4} follow from \eqref{el1} and \eqref{el3} respectively after applying \eqref{el5}.
\end{proof}

\subsection{Comparison with Beck and Nakajima's PBW bases}
We now recall Beck and Nakajima's construction from  \cite{BN}. For each $i \in \bar I$, choose a diagram automorphism $\tau_i \in \mathcal{T}$ such that $t_{\omega_i} \tau_i^{-1} \in W$ (the non-extended affine Weyl group). Choose a reduced decomposition for each $t_{\omega_i} \tau_i^{-1}$. Concatenating these expressions and commuting the $\tau_i$-factors to the right gives a reduced expression
\begin{align}
  \label{eq:5}
t_{\omega_n} t_{\omega_{n-1}} \cdots t_{\omega_1} = s_{i_1} \cdots s_{i_N} \tau
\end{align}
where $\tau = \tau_n \cdots \tau_1$. Form the doubly-infinite word 
\begin{align}
  \label{eq:6}
  {\mathbf h} = ( \cdots i_{-1}, i_0, i_1, \cdots )
\end{align}
defined by $i_{k+N} = \tau(i_k)$ for all $k \in \ZZ$.
Following \cite[Equation 3.3]{BN}, define
\begin{align}
  \label{eq:7}
\beta_k =  
\begin{cases}
s_{i_0} s_{i_{-1}} \cdots s_{i_{k+1}} \alpha_{i_k} & \quad \text{if } k \leq 0, \\
s_{i_1} s_{i_2} \cdots s_{i_{k-1}} \alpha_{i_k} & \quad \text{if } k > 0.
\end{cases}
\end{align}
This defines a convex order $\prec_0$ on positive roots by
\begin{align}
  \label{eq:8}
  \beta_0 \prec_0 \beta_{-1} \cdots \prec_0 \delta \prec_0 \cdots \beta_2 \prec_0 \beta_1.
\end{align}
For each $p \in \ZZ$, define a convex order as follows:
\begin{align}
  \label{eq:9}
\prec_p=
\begin{cases}
\prec_0^{{s_{i_0}} \cdots s_{i_{n-1}}} & \quad \text{if } n\geq 0 \\
\prec_0^{{s_{i_{-1}}} \cdots s_{i_{n}}} & \quad \text{if } n< 0.
\end{cases}
\end{align}

The PBW basis $L(\cdot, 0)$ considered by Beck and Nakajima in \cite[Section 3.1]{BN} is precisely our $L(\cdot, \prec_0)$. They also consider a basis $L(\cdot, p)$ for each $p \in {\Bbb Z}$, which are constructed from $L(\cdot, \prec_0)$ using braid group reflections. The following is immediate from their construction and our Proposition \ref{prop:cCS}.
\begin{Corollary}
For all $p$, Beck and Nakajima's basis $  L(\cdot, p)$ is precisely the same as our basis $L(\cdot, \prec_p)$. In particular, Beck and Nakajima's bases $L(\cdot, p)$ and $L(\cdot, q)$ coincide whenever $p-q$ is divisible by $N$ times the order of $\tau$. \qed
\end{Corollary}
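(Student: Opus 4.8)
The plan is to establish the pointwise identity $L(\cc,p)=L(\cc,\prec_p)$ for every $p\in\ZZ$ and every Lusztig datum $\cc$, and then read off the periodicity. I would induct on $|p|$, the base case $p=0$ being the identification $L(\cdot,0)=L(\cdot,\prec_0)$ recalled just above. Beck--Nakajima pass from $L(\cdot,p)$ to $L(\cdot,p+1)$, and symmetrically to $L(\cdot,p-1)$, by a reflection step built out of exactly the elementary operations listed in Proposition~\ref{prop:cCS}: applying a power of Kashiwara's operator $\widetilde E_j$ or $\widetilde E_j^*$ at the least, resp.\ greatest, root of the current convex order (parts~(\ref{el1}), (\ref{el2})), and applying $T_j$ or $T_j^{-1}$ at an end where the relevant coefficient already vanishes (parts~(\ref{el3}), (\ref{el4})); part~(\ref{el5}) supplies the left--right symmetry needed to reduce the case $p<0$ to the case $p>0$. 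Each of these operations carries $L(\cc,\prec)$ to $L(\cc',\prec')$ by a prescribed rule on the pair $(\cc,\prec)$, and applying the corresponding sequence of rules to $(\,\cdot\,,\prec_0)$ produces $(\,\cdot\,,\prec_p)$ by \eqref{eq:9}; so once the two bookkeeping schemes are aligned, the induction runs automatically.

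Concretely, from \eqref{eq:7}--\eqref{eq:9} one identifies $\prec_p$ as the one-row order whose infinite reduced word is $\mathbf h$ shifted by $p$ positions, so the end letter $j$ at which the reflection from $p$ to $p+1$ acts is read directly off $\mathbf h$, and $\prec_p^{s_j}=\prec_{p+1}$ is precisely \eqref{eq:9}. One then checks that, after splitting $\cc$ (relative to $\prec_p$) into its coefficient at that end root and the remaining data, the Beck--Nakajima passage from $L(\cc,p)$ to $L(\cc,p+1)$ is a composite of the operations in parts~(\ref{el1})--(\ref{el4}): a braid operator $T_j^{\mp1}$, framed by powers of $\widetilde E_j$ or $\widetilde E_j^*$ that remove and restore the end factor, each applied to an $L(\cc',\prec')$ whose image Proposition~\ref{prop:cCS} computes; assembling these yields $L(\cc,\prec_{p+1})$, and the case $p<0$ is symmetric. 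For the final assertion, \eqref{eq:6} gives $i_{k+N}=\tau(i_k)$, so $\mathbf h$ is periodic with period $N\cdot\operatorname{ord}(\tau)$; shifting its window by a multiple of $N\cdot\operatorname{ord}(\tau)$ returns the identical word, hence the identical convex order, so $\prec_p=\prec_q$ whenever $N\cdot\operatorname{ord}(\tau)\mid p-q$, and then $L(\cdot,p)=L(\cdot,\prec_p)=L(\cdot,\prec_q)=L(\cdot,q)$.

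I expect the only real work to be bookkeeping: one must extract from \cite[Section~3.1]{BN} the exact reflection recipe — which end of $\mathbf h$ each step uses, whether the braid operator there is $T_j$ or $T_j^{-1}$, and how the Lusztig datum is relabelled — and verify term by term that it matches the operation $\prec_p\mapsto\prec_p^{s_j}=\prec_{p+1}$ of \eqref{eq:9} together with the $\cc\mapsto\cc\circ s_i$ convention of Proposition~\ref{prop:cCS}. No genuinely new input is required; Proposition~\ref{prop:cCS} is doing all of the mathematical work.
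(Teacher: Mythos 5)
Your proposal is correct and follows the same route as the paper, which simply asserts that the corollary "is immediate from their construction and our Proposition \ref{prop:cCS}"; your induction on $|p|$, the matching of Beck--Nakajima's reflection recipe with the operations (\ref{el1})--(\ref{el5}) of Proposition \ref{prop:cCS} and the passage $\prec_p \mapsto \prec_p^{s_j} = \prec_{p+1}$ of \eqref{eq:9}, and the periodicity of $\mathbf h$ for the final assertion are exactly the intended (unwritten) details. You have correctly identified that Proposition \ref{prop:cCS} carries all the mathematical content and the rest is bookkeeping.
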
 

\begin{Remark} With Beck and Nakajima's convex order, there are good formulas relating the real root vectors to the generators in Drinfeld's presentation of $U_q(\mathfrak{g})$ (see \cite[Equation 3]{Beck2} and \cite[Lemma 1.5]{BCP}). 
\end{Remark}

\subsection{More properties of PBW bases}

\begin{Proposition}
  \label{prop-crystal-basis}
For all convex orders, $L(\cc,\prec) \in \Uplusintegral$, and $\{L(\cdot,\prec) \}$ is a crystal basis. 
\end{Proposition}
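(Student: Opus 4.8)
The plan is to reduce everything to known results about Beck--Nakajima's PBW bases, via the comparison established in Proposition~\ref{prop:cCS} and the preceding corollary. Fix a Lusztig datum $\cc$ and a convex order $\prec$. Since the claim concerns only the single weight space $\U^+_{\wt(\cc)}$, and since by Lemma~\ref{lem:eo} we can find a one-row order $\prec'$ agreeing with $\prec$ on all pairs of roots of height at most $\langle \wt(\cc),\rho\rangle$, Corollary~\ref{cor:onS+} (applied root-by-root to the real root vectors) together with the definition of $S^{\prec,\cc_\delta}$ via coarse type shows $L(\cc,\prec) = L(\cc,\prec')$. So without loss of generality $\prec$ is a one-row order.

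Next I would bring in the classification of one-row orders. By Proposition~\ref{prop:1ro} every one-row order arises from an infinite reduced word $\cdots s_{i_{-1}} s_{i_0} s_{i_1}\cdots$. I claim that, after possibly passing to a further one-row order agreeing with $\prec$ up to the relevant height, one may assume this order is $\prec_p$ for some $p$ in the Beck--Nakajima family of Section~2.5, or is obtained from such a $\prec_p$ by applying a sequence of the operations $\prec \mapsto \prec^{s_i}$ (moving finitely many roots past one end) and the reversal $\prec\mapsto\prec^*$. This is where the main work lies: one must check that the Beck--Nakajima orders (together with their reverses and their $s_i$-translates) exhaust, up to agreement below any fixed height, all one-row orders of all coarse types. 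The key points are (a) every coarse type $\bar w$ is reached from the standard one by a sequence of simple reflections, handled by Theorem~\ref{thm:saito-imag} and Corollary~\ref{cor:im-sym}; and (b) within a fixed coarse type, any two one-row orders agree up to a bounded height after finitely many $\prec\mapsto\prec^{s_i}$ moves at both ends, which follows from the structure in Proposition~\ref{prop:1ro}.

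With that reduction in hand, the conclusion follows by transport of structure. For $\prec = \prec_0$, the statement is exactly Beck--Nakajima's theorem: $L(\cc,\prec_0) \in \Uplusintegral$ and $\{L(\cdot,\prec_0)\}$ is a crystal basis (i.e., it spans $\cL(-\infty)$ over $\cA$ and descends to $B(-\infty)$ modulo $q_s^{-1}$), as recalled in \cite[Section 3.1]{BN}. The operations relating a general one-row order to $\prec_0$ preserve these properties: Proposition~\ref{prop:cCS}\eqref{el5} shows that $*$ sends $L(\cc,\prec)$ to $L(\cc,\prec^*)$, and $*$ preserves $\Uplusintegral$ and $\cL(-\infty)$ and intertwines the crystal structure (it exchanges $\e_i$ with $\e_i^*$), so the reversed order again gives a crystal basis; Proposition~\ref{prop:cCS}\eqref{el3}--\eqref{el4} together with Proposition~\ref{prop:Saito} (Saito's reflection functor) show that applying $T_{i}^{\pm 1}$ when $\cc_{\alpha_i}=0$ sends $L(\cc,\prec)$ to $L(\cc\circ s_i, \prec^{s_i})$ compatibly with $\cL(-\infty)$ and with the crystal operators $\sigma_i$, while Proposition~\ref{prop:cCS}\eqref{el1}--\eqref{el2} handles the excluded case $\cc_{\alpha_i}\neq 0$ by first peeling off $\widetilde E_i^{\cc_{\alpha_i}}$ (which manifestly preserves integrality and the crystal lattice). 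Since each $L(\cc,\prec)$ is a monomial in root vectors each of which lies in $\Uplusintegral$ (the $E_\beta$ are obtained from the $E_i$ by $T_i^{\pm1}$, which preserve the integral form, and the $S^{\prec,\underline\lambda}$ are integral polynomials in the $\IPsi{\bar w}{i,k}$, themselves integral by their defining formula), the integrality statement is in fact immediate; the crystal-basis statement is the substantive part and is obtained by the inductive transport just described.

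The main obstacle I anticipate is step two: verifying cleanly that the Beck--Nakajima orders, their reverses, and their $\prec^{s_i}$-translates suffice to reach an arbitrary one-row order up to bounded height. One wants to avoid a delicate combinatorial case analysis; the cleanest route is probably to argue directly on the level of crystal data, using Proposition~\ref{prop:cCS} to show that $\{L(\cdot,\prec)\}$ and $\{L(\cdot,\prec')\}$ have the same span and reduce to the same basis of $B(-\infty)$ whenever $\prec,\prec'$ differ by a single braid move at an end or by reversal, and then invoking that any two one-row orders are connected by a finite chain of such moves below any fixed height (a statement that can be extracted from Ito's classification \cite{Ito}).
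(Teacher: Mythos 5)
Your proof takes a genuinely different route from the paper's, and the route has a gap precisely where you flag one.

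The paper does not attempt to transport the crystal-basis structure along chains of braid moves. Instead, after noting that integrality reduces to $S^{\prec_0,\underline\lambda}\in\Uplusintegral$ (which is \cite[Proposition 3.15]{BN}), it shows that the family $\{L(\cdot,\prec)\}$ is almost orthonormal with respect to the standard bilinear form, by the same computation as \cite[Equation 3.25]{BN}. Almost orthonormality of an integral family, via \cite[14.2.2]{Lus}, immediately gives that each $L(\cc,\prec)$ equals $\pm$ a global crystal basis element; the only thing left is to pin down the sign, which is done by propagating signs through Kashiwara and braid operators back to $\prec_0$. The sign propagation is a much weaker statement than transporting the full crystal-basis property, and does not require any connectedness of orders: one simply observes that the operations applied never flip the sign.

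The gap in your argument is step two, which you yourself identify. The operations $\prec\mapsto\prec^{s_i}$ only cyclically shift the underlying doubly infinite reduced word (and reversal swaps the two tails), so the set of one-row orders reachable from a Beck--Nakajima order $\prec_p$ by such moves is precisely the set of break-point shifts and reversals of a \emph{single} fixed infinite word. A general one-row order comes from a different infinite word, and it is not clear (and I do not believe it is in \cite{Ito}) that for every one-row order $\prec'$ and every $N$ there is a shift of $\prec_p$ agreeing with $\prec'$ on all roots of height $\leq N$. Unless you can establish this, the inductive transport from $\prec_0$ does not reach all one-row orders, and Lemma~\ref{lem:eo} alone does not close the gap because it produces a one-row order, not necessarily one in the reachable set. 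Your subsidiary observations are fine: the reduction to one-row orders via Corollary~\ref{cor:onS+}, the integrality of the root vectors, and the fact that once a basis lies in $\cL$ and reduces to $B(-\infty)$ it spans $\cL$ by Nakayama all hold. But without the connectedness input, the crystal-basis claim for an arbitrary one-row order is not established. The almost-orthonormality argument of the paper is precisely the device that makes this combinatorial question irrelevant, and I would encourage you either to prove the needed connectedness statement carefully or to adopt the inner-product approach.
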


\begin{proof}  
For Beck and Nakajima's bases this is \cite[Theorem 3.13]{BN}. The fact that the $L(\cc,\prec)$ lie in $\Uplus_{\mathscr{A}}$ reduces to the fact that $S^{\prec_0,\overline \lambda} \in\Uplus_{\mathscr{A}}$ when $\prec_0$ is of the standard coarse type (see \cite[Proposition 3.15]{BN}). We can also compute the inner products $\left(L(\cc,\prec),L(\cc',\prec) \right) = \delta_{\cc,\cc'} \text{ mod } {q_s}^{-1}$ exactly as in \cite[Equation 3.25]{BN} i.e the basis $\{ L(\cc,\prec)$ is ``almost orthonormal''. Then by \cite[14.2.2]{Lus} there exists a sign $\sgn(\cc,\prec) \in \{ \pm 1 \}$ such that  $\sgn(\cc,\prec) L(\cc,\prec) \in B(-\infty)$. Since both Kashiwara operators and braid operators preserve that sign, we are reduced to checking the sign for $S^{\prec_0,\underline \lambda}$. The fact that this sign is $+1$ is \cite[Lemma 5.2]{BN}. 
\end{proof}

\begin{Definition}
 Let $w \in W$. Define
\begin{align}
  \label{eq:52}
U^+_q(w,+) = \{x \in \Uplus \mid T_w(x) \in \Uplus\}, \;\; \text{ and} \\
U^+_q(w,-) =  \{x \in \Uplus \mid T_w(x) \in U_q^-(\g)\}. 
\end{align}
The vector spaces $U^+_q(w,\pm)$ are subalgebras of $\Uplus$.
\end{Definition}

Let $\Delta^-_w$ be the set of positive roots that are made negative by $w$, and let $\Delta^+_w$ be the set of positive roots that remain positive after being acted upon by $w$. Then $(\Delta^-_w, \Delta^+_w)$ is a biconvex partition of the positive roots. 

\begin{Proposition} \label{prop:lop}
Fix $w \in W$ and let $\prec$ be a convex order such that for all $\beta^- \in \Delta^-_w$ and $\beta^+ \in \Delta^+_w$ we have $\beta^- \prec \beta^+$. The PBW basis vectors that only involve root vectors corresponding to roots in $\Delta^\pm_w$ form a basis of $U^+_q(w,\pm)$. In particular, their span is independent of choice of $\prec$. 
\end{Proposition}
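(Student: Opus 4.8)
The plan is to reduce to the untwisted case of Beck–Nakajima or, more precisely, to reduce to a statement about one-row orders and then invoke the definition of the PBW basis together with the braid-group reflection identities of Proposition~\ref{prop:cCS}. First I would observe that it suffices to treat $U^+_q(w,+)$; the statement for $U^+_q(w,-)$ follows by applying Kashiwara's involution $*$, using Lemma~\ref{lem:real*} and part~\eqref{el5} of Proposition~\ref{prop:cCS}, together with the fact that $T_w$ on $\Uplus$ and $T_{w^{-1}}^{-1}$ on $U^-_q(\g)$ are intertwined by $*$. So the core claim is: the span of those $L(\cc,\prec)$ supported on $\Delta^-_w$ equals $U^+_q(w,+)$.

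Next I would set up the key reduction. Write $w = s_{i_1}\cdots s_{i_\ell}$ where $(i_1,\dots,i_\ell)$ is the reduced word extracted from $\prec$ as in \eqref{eq:inversion-set-of-w-inv}, so that $\Delta^-_w = \{\beta_1 \prec \cdots \prec \beta_\ell\}$ are exactly the first $\ell$ roots of $\prec$ (this is possible precisely because $\prec$ orders all of $\Delta^-_w$ before all of $\Delta^+_w$ and $(\Delta^-_w,\Delta^+_w)$ is biconvex). The PBW vectors supported on $\Delta^-_w$ are then the $E^{\prec,(c_1)}_{\beta_1}\cdots E^{\prec,(c_\ell)}_{\beta_\ell}$, and by the standard finite-type argument (iterating the defining relations of $T_i$ and Corollary~\ref{cor:simpleroots}), each such monomial lies in $U^+_q(w,+)$: indeed $T_{s_{i_\ell}^{-1}\cdots s_{i_1}^{-1}}$ applied to it, peeling off $E^{\prec}_{\beta_1}, E^{\prec}_{\beta_2},\dots$ one at a time using that $\beta_k$ becomes a negative simple root after $s_{i_{k-1}}\cdots s_{i_1}$... actually cleaner: use Proposition~\ref{prop:cCS}\eqref{el3} repeatedly to see $T_{i_1}$ sends $L(\cc,\prec)$ to $L(\cc\circ s_{i_1},\prec^{s_{i_1}})$, and iterate, checking at each stage that the relevant coordinate is $0$; the monomials supported on $\Delta^-_w$ are exactly those for which this iteration keeps producing honest Lusztig data with the leading coordinate zero, so $T_w$ of such a vector is again a PBW vector, hence in $\Uplus$. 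For the reverse inclusion, I would use that $\{L(\cc,\prec)\}$ is a basis of $\Uplus$ (Proposition~\ref{prop-crystal-basis}), and that $T_w$ maps $\Uplus$ into a known space; writing a general $x \in U^+_q(w,+)$ in the PBW basis, any monomial involving a root vector or the imaginary part $S^{\prec,\cc_\delta}$ indexed by something in $\Delta^+_w \cup \{\delta\}$ gets sent by $T_w$ to something with a nontrivial component outside $\Uplus$ (using the braid identities again, and noting $T_w S^{\prec,\cc_\delta} = S^{\prec^w,\cc_\delta}$), and these "bad" components cannot cancel because the PBW images form part of a basis of $\Uplus \oplus (\text{rest of }\U)$ — so all bad coefficients vanish, forcing $x$ to be supported on $\Delta^-_w$.

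The main obstacle I anticipate is handling the imaginary root vectors $S^{\prec,\cc_\delta}$ cleanly: a PBW monomial supported on $\Delta^-_w$ with $w$ having $\delta \succ \beta$ for all $\beta \in \Delta^-_w$ genuinely has no imaginary factor, but one must argue that \emph{any} monomial with a nontrivial $S^{\prec,\cc_\delta}$ is not in $U^+_q(w,+)$, which requires knowing how $T_w$ acts on the imaginary vectors and that the result is not in $\Uplus$ unless the weight is zero. Here Corollary~\ref{cor:im-sym} and Theorem~\ref{thm:saito-imag} give $T_w S^{\prec,\underline\lambda} = S^{\prec^w,\underline\lambda}$, which lies in $\Uplus$ — so the obstruction actually comes not from the imaginary part alone but from interaction with real factors indexed by $\Delta^+_w$; the careful bookkeeping of which mixed monomials survive in $\Uplus$ under $T_w$, and a linear-independence argument to rule out cancellation, is where the real work lies. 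The final sentence ("their span is independent of choice of $\prec$") is then immediate, since $U^+_q(w,\pm)$ is defined without reference to $\prec$.
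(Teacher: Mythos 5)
The paper's proof of Proposition~\ref{prop:lop} is a three-line dimension count, not the two-inclusion argument you set up. It observes that (a)~the PBW monomials supported on $\Delta^\pm_w$ manifestly lie in $U^+_q(w,\pm)$ (by definition of the real root vectors via braid operators together with the standard criterion for when $T_u(E_j)$ or $T_u(F_j)$ lands in the positive or negative part), (b)~they are linearly independent since they form part of a basis of $\Uplus$, and (c)~the number of such monomials of a given weight $\lambda$ equals $\dim U^+_q(w,\pm)_\lambda$. Step~(c) is exactly the quantum Schubert cell dimension count, which is taken as known, and it makes any ``reverse inclusion'' argument unnecessary.

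Your proposal correctly identifies the forward inclusion and the role of the braid reflection identities of Proposition~\ref{prop:cCS}\eqref{el3}. But for the reverse inclusion you attempt a direct argument that general elements of $U^+_q(w,\pm)$ are supported only on $\Delta^\mp_w$-free monomials, and you yourself acknowledge that ``the careful bookkeeping\dots is where the real work lies'' --- that is the genuine gap. There are also two confusions worth flagging. First, your reduction to a single case via Kashiwara's involution $*$ does not go through cleanly in affine type: $*$ sends $L(\cc,\prec)$ to $L(\cc,\prec^*)$, but the reversed order $\prec^*$ puts $\delta$ (and all of $\Delta^+_w$) first, so $(\Delta^+_w,\Delta^-_w)$ is not of the form $(\Delta^-_{w'},\Delta^+_{w'})$ for any $w' \in W$, and the identity $*T_w* = T_{w^{-1}}^{-1}$ does not turn the $\Delta^+_w$-statement into the $\Delta^-_w$-statement. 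Second, your core claim equates the span of $L(\cc,\prec)$ supported on $\Delta^-_w$ with $U^+_q(w,+)$; with the paper's (parallel) indexing it should be $U^+_q(w,-)$, and the mismatch shows up in your puzzlement over $S^{\prec,\underline\lambda}$: since $\delta \in \Delta^+_w$ always, $S^{\prec,\underline\lambda}$ is supported on $\Delta^+_w$ and so does belong to (the span for) $U^+_q(w,+)$, which is the ``large'' side, not a counterexample to anything. Replacing the reverse inclusion with the dimension count resolves all of these issues at once.
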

\begin{proof}
It is clear from their definitions that such basis vectors are elements of $U^+_q(w,\pm)$, and that they are linearly independent. Moreover the number of such vectors of a fixed weight $\lambda$ is precisely $\dim U^+_q(w,\pm)$ (i.e. it is given by the number of ways of writing $\lambda$ as a sum of elements of $\Delta_w^\pm$ counted with multiplicity). 
\end{proof}

For any convex order $\prec$ and any prefix $S$ of that order, let $\bar S$ be the complement of $S$ in $\Delta_+^{\text{min}}$. We can factor any PBW basis element $L(\cc,\prec) = L(\cc_S, \prec) \cdot L(\cc_{\bar S}, \prec)$, where $\cc_S$ is the Lusztig datum that agrees with $\cc$ for all roots in $S$ and is otherwise zero (similarly for $\cc_{\bar S}$). The following is a slight strengthening of \cite[Lemma 3.30]{BN}.

\begin{Lemma}{\label{lem:upper-triangularity-of-multiplication}}
Let $\prec$ be a convex order, and let $\cc$ and $\cc^\prime$ be Lusztig data. Write
\begin{align}
L(\cc,\prec) L(\cc^\prime, \prec) = \sum_{\cc^{\prime\prime}} a^{\cc^{\prime\prime}}_{\cc,\cc^\prime} L(\cc^{\prime\prime},\prec)
\end{align}
where $ a^{\cc^{\prime\prime}}_{\cc,\cc^\prime} \in \QQ(q_s)$. If $ a^{\cc^{\prime\prime}}_{\cc,\cc^\prime} \neq 0$ then, for any prefix $S$ of $\prec$, we have:
\begin{align}
  \label{eq:38}
  \wt (\cc''_{S}) \geq \wt (\cc_{S}) \\ 
  \wt (\cc''_{\bar S}) \geq \wt (\cc'_{\bar S}) 
\end{align}
\end{Lemma}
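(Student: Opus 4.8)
The plan is to reduce first to one-row orders, then to the statement that the spans of ``one-sided'' PBW monomials are subalgebras of $\Uplus$, and finally to deduce that from Proposition~\ref{prop:lop}. For the reduction to one-row orders: fix $\cc,\cc'$ and set $H=\langle\wt(\cc)+\wt(\cc'),\rho^\vee\rangle$. Let $R$ be the finite set consisting of $\delta$ together with all real roots $\beta$ with $\langle\beta,\rho^\vee\rangle\leq H$. Since $L(\cc,\prec)L(\cc',\prec)$ is homogeneous of weight $\wt(\cc)+\wt(\cc')$ and all weights of $\Uplus$ lie in the positive cone, every $\cc''$ with $a^{\cc''}_{\cc,\cc'}\neq 0$, as well as $\cc$ and $\cc'$, is supported on $R$. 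By Lemma~\ref{lem:eo} choose a one-row order $\prec'$ of the same coarse type as $\prec$ restricting to $\prec$ on $R$; then $L(\dd,\prec)=L(\dd,\prec')$ for every $\dd$ supported on $R$, so the structure constants agree for $\prec$ and $\prec'$, and given a prefix $S$ of $\prec$ we may pick a prefix $S'$ of $\prec'$ with $S'\cap R=S\cap R$, whence $\wt(\cc''_S)=\wt(\cc''_{S'})$, $\wt(\cc_S)=\wt(\cc_{S'})$, and so on. So we may assume $\prec$ is one-row.

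Next, for a prefix $S$ of $\prec$ with complement $\bar S$, put $A_S=\mathrm{span}\{L(\cc,\prec):\operatorname{supp}(\cc)\subseteq S\}$ and $A_{\bar S}=\mathrm{span}\{L(\cc,\prec):\operatorname{supp}(\cc)\subseteq \bar S\}$. I claim the lemma is immediate once $A_S$ and $A_{\bar S}$ are known to be subalgebras. Using the factorization $L(\cc,\prec)=L(\cc_S,\prec)L(\cc_{\bar S},\prec)$ we have
\[
L(\cc,\prec)L(\cc',\prec)=L(\cc_S,\prec)\bigl(L(\cc_{\bar S},\prec)L(\cc'_S,\prec)\bigr)L(\cc'_{\bar S},\prec).
\]
Expanding the middle bracket in the PBW basis and factoring each term as $L(\dd_S,\prec)L(\dd_{\bar S},\prec)$ writes it as $\sum_k y_kx_k$ with $y_k\in A_S$ and $x_k\in A_{\bar S}$ homogeneous. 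Since $A_S$ is a subalgebra, $L(\cc_S,\prec)y_k\in A_S$ is homogeneous of weight $\wt(\cc_S)+\wt(y_k)$; expanding it in the PBW basis of $A_S$ produces monomials $L(\ee,\prec)$ with $\operatorname{supp}(\ee)\subseteq S$ and $\wt(\ee)=\wt(\cc_S)+\wt(y_k)\geq\wt(\cc_S)$, because $\wt(y_k)$ lies in the positive cone. Similarly $x_kL(\cc'_{\bar S},\prec)$ expands into monomials $L(\ff,\prec)$ with $\operatorname{supp}(\ff)\subseteq\bar S$ and $\wt(\ff)\geq\wt(\cc'_{\bar S})$. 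Finally, because $S$ is a prefix (so all roots of $S$ precede all roots of $\bar S$, and $\delta$ lies in exactly one of them), each product $L(\ee,\prec)L(\ff,\prec)$ is itself a single PBW monomial $L(\cc'',\prec)$ with $\cc''_S=\ee$ and $\cc''_{\bar S}=\ff$; so every $\cc''$ that occurs satisfies both inequalities.

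It remains to check that $A_S$ and $A_{\bar S}$ are subalgebras for any prefix $S$ of a one-row order $\prec$. If $S$ is a finite prefix with $\delta\notin S$, then $S=\Delta^-_w$ for some $w\in W$ and $A_S=U^+_q(w,-)$ by Proposition~\ref{prop:lop}; dually, if $\bar S$ is finite with $\delta\notin\bar S$ then $S=\Delta^+_w$ and $A_S=U^+_q(w,+)$ --- both are subalgebras. For an infinite prefix $S$ with $\delta\notin S$ (necessarily the set of all roots preceding $\delta$), $A_S$ is the increasing union of the subalgebras attached to its finite initial subsegments, hence a subalgebra; for an infinite prefix $S$ with $\delta\in S$ and $\bar S$ infinite, $S$ is the decreasing intersection of the prefixes $T\supseteq S$ with $\bar T$ finite, and each such $A_T$ is a subalgebra by the finite case (applied to $\prec$ or, since $\bar T$ is then a finite suffix, to $\prec^*$), so $A_S$ is an intersection of subalgebras. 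Throughout, one passes between $\prec$ and $\prec^*$ --- equivalently between prefixes and suffixes --- using the support-preserving anti-automorphism $x\mapsto x^*$ of $\Uplus$ and the identity $L(\cc,\prec)^*=L(\cc,\prec^*)$ from Proposition~\ref{prop:cCS}(\ref{el5}); this also handles $A_{\bar S}$.

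The main obstacle is the last case of the previous paragraph, where $\delta$ lies on the infinite side: there one is asserting that the span of PBW monomials built from the imaginary vectors $S^{\prec,\underline\lambda}$ together with the real root vectors on a single side of $\delta$ is a subalgebra. The intersection argument above formally reduces this to Proposition~\ref{prop:lop}, but it relies on the compatibility of the imaginary root vectors with the prefix factorization of PBW monomials, which is exactly the structural input behind \cite{BN,BCP} (their Lemma~3.30); an alternative is to cite that result for Beck--Nakajima's convex orders and transport it to arbitrary convex orders via the braid operators, exactly as in Proposition~\ref{prop:cCS}. Everything else is bookkeeping.
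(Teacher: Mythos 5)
Your proof is correct and follows the same strategy as the paper's: factor $L(\cc,\prec)L(\cc',\prec)$ across the prefix $S$, re-expand the crossing term $L(\cc_{\bar S},\prec)L(\cc'_S,\prec)$ in the PBW basis, and use the subalgebra structure of Proposition~\ref{prop:lop} to keep track of how weight splits between $S$ and $\bar S$. The one point where you go noticeably beyond what the paper writes is the careful verification that the span $A_S$ of PBW monomials supported on a prefix $S$ is a subalgebra for \emph{every} prefix, not only for those of the form $\Delta^-_w$ or $\Delta^+_w$. Proposition~\ref{prop:lop} as stated applies only when $S$ is the inversion set of a Weyl group element (or its complement), so the cases you isolate --- in particular a prefix $S$ with $\delta\in S$ and $\bar S$ infinite, which is the case that mixes the real root vectors on one side of $\delta$ with the imaginary vectors $S^{\prec,\underline\lambda}$ --- do require the union/intersection argument (plus the $*$-transport between $\prec$ and $\prec^*$ via Proposition~\ref{prop:cCS}(\ref{el5})) that you supply. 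The paper invokes Proposition~\ref{prop:lop} at that juncture without comment, so this is a real, if small, gap that your argument fills. Two micro-comments: when you invoke Lemma~\ref{lem:eo} to find a one-row order $\prec'$ restricting to $\prec$ on $R$, you should take $R$ large enough that the coarse type is determined (so the imaginary root vectors also agree), but since $\delta\in R$ and the coarse type is read off from the roots $\delta\pm\bar\alpha_i$ this costs you nothing; and in the ``dual'' finite case, as you note, Proposition~\ref{prop:lop} should be applied to $\prec^*$ and transported back via $*$, since for the given $\prec$ the finite set $\bar S$ sits at the wrong end to literally satisfy the hypotheses.
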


\begin{proof}
Let $S$ be a prefix of $\prec$. Then
\begin{align}
L(\cc,\prec)L(\cc',\prec)  & = L(\cc_S,\prec)L(\cc_{\bar S},\prec)L(\cc'_{S},\prec)L(\cc'_{\bar S},\prec) 
\end{align}
Rewriting and factoring according to $S$, we have:
\begin{align}
  \label{eq:23}
 L(\cc_{\bar S},\prec)L(\cc'_{S},\prec) = \sum b_{\cc,\cc',S}^{\tilde{\cc}} L(\tilde{\cc}_S,\prec)L(\tilde{\cc}_{\bar S},\prec) 
\end{align}
Combining this with the above equation, we have:
\begin{align}
  \label{eq:24}
L(\cc,\prec)L(\cc',\prec)  & = \sum b_{\cc,\cc',S}^{\tilde{\cc}} L(\cc_S,\prec)L(\tilde{\cc_{ S}},\prec)L(\tilde{\cc}_{\bar S},\prec)L(\cc'_{\bar S},\prec) 
\end{align}
Then by Proposition \ref{prop:lop}, when we write
\begin{align}
  \label{eq:25}
 L(\cc,\prec)L(\cc',\prec)  & =  \sum_{\cc^{\prime\prime}} a^{\cc^{\prime\prime}}_{\cc,\cc^\prime} L(\cc^{\prime\prime},\prec)
\end{align}
for every $\cc''$ that appears, there is some $\tilde{\cc}$ such that: 
\begin{align}
  \label{eq:26}
  \wt (\cc''_{S}) = \wt (\cc_{S}) + \wt (\tilde{\cc}_{S}) \\ 
  \wt (\cc''_{\bar S}) = \wt (\cc'_{\bar S}) + \wt (\tilde{\cc}_{\bar S}). 
\end{align}
\end{proof}

We recall the following definition from \cite{BN}.
\begin{Definition}
Fix a convex order $\prec$. Let $S$ be the set of real roots less than $\delta$ according to $\prec$. Let $\cc$ and $\cc'$ be Lusztig data of the same weight. Then we say
\begin{align}
  \label{eq:18}
  \cc \geq_\ell \cc'
\end{align}
if $\cc_S$ is greater than or equal to $\cc'_S$ in the lexicographical order where we order the roots of $S$ in the same order as $\prec$. Explicitly, $\cc \geq_\ell \cc'$, means that either $\cc_S = \cc'_S$, or there is some real root $\beta \in S$, such that $\cc_\gamma = \cc'_\gamma$ for all $\gamma \prec \beta$, and $\cc_\beta > \cc'_\beta$. 

Similarly, let $S'$ be the set of real roots greater than $\delta$. We define 
\begin{align}
  \label{eq:19}
 \cc \geq_r \cc' 
\end{align}
if $\cc_{S'}$ is greater than or equal to $\cc'_{S'}$ in the lexicographical order where we order the roots of $S'$ in the \emph{reverse} order as $\prec$. 
Explicitly, $\cc \geq_r \cc'$, means that either $\cc_{S'} = \cc'_{S'}$, or there is some real root $\beta \in S'$, such that $\cc_\gamma = \cc'_\gamma$ for all $\gamma \succ \beta$, and $\cc_\beta > \cc'_\beta$.

Define 
\begin{align}
  \label{eq:34}
  \cc > \cc'
\end{align}
if $\cc \geq_\ell \cc'$ and $ \cc \geq_r \cc'$, and one of those inequalities is strict. Note that $>$ is a partial order, but $\geq_\ell$ and $\geq_r$ are only preorders.
\end{Definition}

The following is immediate from Lemma \ref{lem:upper-triangularity-of-multiplication} (see also {\cite[Lemma 3.30]{BN}).

\begin{Corollary}
  \label{cor:beck-nakajima-upper-triangularity}
Let $\prec$ be a convex order, and let $\cc$ and $\cc^\prime$ be Lusztig data. Write
\begin{align}
L(\cc,\prec) L(\cc^\prime, \prec) = \sum_{\cc^{\prime\prime}} a^{\cc^{\prime\prime}}_{\cc,\cc^\prime} L(\cc^{\prime\prime},\prec)
\end{align}
where $ a^{\cc^{\prime\prime}}_{\cc,\cc^\prime} \in \QQ(q_s)$. Then
\begin{align}
  \label{eq:40}
 \cc'' \geq_\ell \cc \quad \text{ and }  \quad
 \cc'' \geq_r \cc'.
\end{align}
\end{Corollary}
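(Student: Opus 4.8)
The plan is to derive Corollary~\ref{cor:beck-nakajima-upper-triangularity} directly from Lemma~\ref{lem:upper-triangularity-of-multiplication} by specializing the prefix $S$ appropriately. Recall that $\cc''$, $\cc$, $\cc'$ all have the same weight (both sides of $L(\cc,\prec)L(\cc',\prec)=\sum a^{\cc''}_{\cc,\cc'}L(\cc'',\prec)$ are homogeneous), so for any prefix $S$ we have $\wt(\cc''_S)+\wt(\cc''_{\bar S}) = \wt(\cc_S)+\wt(\cc_{\bar S}) = \wt(\cc'_S)+\wt(\cc'_{\bar S})$. Thus the two inequalities $\wt(\cc''_S)\geq\wt(\cc_S)$ and $\wt(\cc''_{\bar S})\geq\wt(\cc'_{\bar S})$ from Lemma~\ref{lem:upper-triangularity-of-multiplication} are in fact equivalent to each other given the weight constraint; the content is that $\wt(\cc''_S)\geq\wt(\cc_S)$ holds for \emph{every} prefix $S$.

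First I would show $\cc''\geq_\ell\cc$. Suppose not; then, letting $S_\delta$ be the set of real roots $\prec\delta$, there is a real root $\beta\in S_\delta$ with $\cc''_\gamma=\cc_\gamma$ for all $\gamma\prec\beta$ but $\cc''_\beta<\cc_\beta$. Take $S$ to be the prefix consisting of all roots $\preceq\beta$ (this is a prefix of the convex order, and since $\beta$ is real and $\prec\delta$, it consists only of real roots on which $\cc$ and $\cc''$ have genuine integer values). Then $\wt(\cc''_S) = \sum_{\gamma\preceq\beta}\cc''_\gamma\gamma$ and $\wt(\cc_S) = \sum_{\gamma\preceq\beta}\cc_\gamma\gamma$; these agree in every coordinate $\gamma\prec\beta$ and have $\cc''_\beta<\cc_\beta$ in the $\beta$-coordinate, so $\wt(\cc_S)-\wt(\cc''_S) = (\cc_\beta-\cc''_\beta)\beta$ is a positive multiple of the positive root $\beta$, hence strictly positive (nonzero and in the positive root cone). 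This contradicts $\wt(\cc''_S)\geq\wt(\cc_S)$ from Lemma~\ref{lem:upper-triangularity-of-multiplication}. Hence $\cc''\geq_\ell\cc$.

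For $\cc''\geq_r\cc'$ the argument is the mirror image: suppose it fails, so with $S'$ the set of real roots $\succ\delta$ there is $\beta\in S'$ with $\cc''_\gamma=\cc'_\gamma$ for all $\gamma\succ\beta$ but $\cc''_\beta<\cc'_\beta$. Now take $S$ to be the prefix consisting of all roots $\prec\beta$, so that $\bar S$ is the final segment of roots $\succeq\beta$, which again consists only of real roots (those $\succeq\beta\succ\delta$). Then $\wt(\cc'_{\bar S})-\wt(\cc''_{\bar S}) = (\cc'_\beta-\cc''_\beta)\beta$ is again a strictly positive multiple of the positive root $\beta$, contradicting the second inequality $\wt(\cc''_{\bar S})\geq\wt(\cc'_{\bar S})$ of Lemma~\ref{lem:upper-triangularity-of-multiplication}. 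This gives $\cc''\geq_r\cc'$, completing the proof.

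There is essentially no serious obstacle here — the statement really is ``immediate'' as claimed, the only point requiring a line of care being that when one compares $\wt(\cc''_S)$ with $\wt(\cc_S)$ for the chosen prefix, the difference lies in the rank-one subspace spanned by a single positive real root $\beta$, so ``$\geq$ in weight'' forces ``$\geq$ in the $\beta$-coefficient''; one must make sure the chosen $S$ really is a prefix of the convex order and that $\beta$ is real (so that $\cc''_\beta,\cc_\beta$ are honest non-negative integers, not multipartitions), which is why the lexicographic orders $\geq_\ell,\geq_r$ were defined only on the real roots in $S_\delta$ and $S'$. One should also remark, as in the paragraph after the definition, that the imaginary coordinate $\cc_\delta$ never enters either $\geq_\ell$ or $\geq_r$, so no comparison of multipartitions is needed.
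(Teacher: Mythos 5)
Your argument is correct and is exactly the intended derivation of the ``immediate'' corollary from Lemma~\ref{lem:upper-triangularity-of-multiplication}: choosing the prefix $S = \{\alpha : \alpha \preceq \beta\}$ (resp.\ $S = \{\alpha : \alpha \prec \beta\}$) at the first real root $\beta$ where the Lusztig data disagree reduces the weight inequality to a single-root comparison, and the positivity of the root cone does the rest. One small correction: the aside in your first paragraph is false. Since $\wt(\cc'') = \wt(\cc) + \wt(\cc')$ rather than $\wt(\cc'') = \wt(\cc) = \wt(\cc')$, it is not the case that $\wt(\cc''_S)+\wt(\cc''_{\bar S}) = \wt(\cc_S)+\wt(\cc_{\bar S})$, and the two inequalities of Lemma~\ref{lem:upper-triangularity-of-multiplication} are genuinely independent pieces of information rather than equivalent to each other. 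This does not affect your main argument, which correctly uses the first inequality for $\geq_\ell$ and the second for $\geq_r$, but the remark as written is misleading and should be dropped.
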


The following is a slight generalization of \cite[Proposition 3.36]{BN}. Their proof carries over to one-row convex orders, and the case of general convex orders follows by approximation by one-row orders. 

\begin{Proposition}  \label{prop:bbar}
For any convex order $\prec$ and any $\cc$,
\begin{align}
\overline{L(\cc,\prec)} = L(\cc,\prec) + \sum_{\cc^\prime > \cc} a_{\cc,\cc^\prime} L(\cc^\prime, \prec)
\end{align}
for $a_{\cc,\cc^\prime} \in \mathbb{Q}(q_s)$.  \qed
\end{Proposition}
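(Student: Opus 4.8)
The statement to prove is Proposition \ref{prop:bbar}: bar-invariance triangularity.

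\textbf{Plan.} The strategy is to reduce to Beck–Nakajima's result \cite[Proposition 3.36]{BN} for their specific convex orders $\prec_p$, and then propagate to all one-row orders by braid reflections, and finally to all convex orders by the approximation argument (Lemma \ref{lem:eo}). Since $\geq_\ell, \geq_r$ and hence $>$ are defined relative to the order $\prec$, the delicate point is that braid reflections change the order, so I must track how the partial order $>$ transforms.

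\textbf{Step 1: the Beck–Nakajima case.} For $\prec = \prec_0$ (the standard one-row order of \cite{BN}), the statement is exactly \cite[Proposition 3.36]{BN} (using that $L(\cdot,\prec_0) = L(\cdot, 0)$ as noted after equation \eqref{eq:9}). More generally, for $\prec = \prec_p$ the same proof in \cite{BN} applies verbatim since these are the orders they consider; alternatively one can deduce it from the $\prec_0$ case using Step 2 below, since each $\prec_p$ is obtained from $\prec_0$ by a sequence of simple reflections $\prec^{s_i}$ on orders with $\alpha_i$ extremal.

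\textbf{Step 2: propagation by braid reflections.} Suppose the statement holds for a one-row order $\prec$ with $\alpha_i$ minimal, and let $\prec' = \prec^{s_i}$ (so $\alpha_i$ is maximal for $\prec'$). First handle the sub-case where the Lusztig datum $\cc$ has $\cc_{\alpha_i} = 0$: then by Proposition \ref{prop:cCS}\eqref{el3}, $T_i L(\cc,\prec) = L(\cc \circ s_i, \prec')$, and since $T_i$ is an algebra automorphism commuting with the bar-involution up to the known correction (or one uses the compatibility of $T_i$ with bar on the subalgebra $U^+_q(s_i,+)$), applying $T_i$ to the expansion for $\overline{L(\cc,\prec)}$ and using that $\cc \mapsto \cc\circ s_i$ is an order-isomorphism from Lusztig data for $\prec$ with $\cc_{\alpha_i}=0$ to Lusztig data for $\prec'$ with $(\cc\circ s_i)_{\alpha_i}=0$ intertwining the two partial orders $>$, we get the claim for such $\cc$. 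For general $\cc$ with $\cc_{\alpha_i} = m > 0$, write $L(\cc,\prec') = L(\cc',\prec') E_{\alpha_i}^{(m)}$ where $\cc'$ agrees with $\cc$ except $\cc'_{\alpha_i}=0$; since $E_{\alpha_i} = \widetilde E_i^* $-type generator is bar-invariant (as $E_i$ is bar-invariant) and $\alpha_i$ is maximal for $\prec'$, apply the already-established case to $L(\cc',\prec')$, multiply on the right by $E_{\alpha_i}^{(m)} = \overline{E_{\alpha_i}^{(m)}}$, and invoke Corollary \ref{cor:beck-nakajima-upper-triangularity} to control the resulting expansion (right multiplication by a power of $E_{\alpha_i}$ with $\alpha_i$ maximal adds only a fixed amount to $\cc_{\alpha_i}$ and does not change the $\geq_\ell$-comparison on the roots $\prec \delta$, and raises $\cc$ in $\geq_r$). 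This shows every $\cc''$ appearing still satisfies $\cc'' > \cc$ or $\cc''=\cc$. Since any two one-row orders are connected by a finite sequence of such reflections (changing finitely many roots at a time, compatibly with the height truncation in Step 3), the statement holds for all one-row orders.

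\textbf{Step 3: general convex orders.} Fix an arbitrary convex order $\prec$ and a Lusztig datum $\cc$ of weight $\lambda$. By Lemma \ref{lem:eo}, choose a one-row order $\prec'$ agreeing with $\prec$ on all pairs of roots of height $\leq \height(\lambda)$; then $L(\cc,\prec) = L(\cc,\prec')$ and, for every $\cc''$ of weight $\lambda$ appearing in the expansion, $L(\cc'',\prec) = L(\cc'',\prec')$, and the partial orders $>$ for $\prec$ and $\prec'$ agree on weight-$\lambda$ data since they only involve the relative order of roots of height $\le \height(\lambda)$. Hence the expansion for $\prec'$ from Step 2 is literally the expansion for $\prec$, proving the claim. $\square$

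\textbf{Main obstacle.} The step I expect to be most delicate is verifying in Step 2 that the partial order $>$ (built from the two lexicographic preorders $\geq_\ell, \geq_r$) transforms correctly under the braid reflection $\cc \mapsto \cc \circ s_i$ — in particular that reflecting at an extremal simple root permutes the remaining roots of $S$ and $S'$ in an order-preserving way for the relevant lexicographic comparisons, and that the ``error terms'' introduced when $\cc_{\alpha_i}>0$ (handled via right multiplication by $E_{\alpha_i}^{(m)}$ and Corollary \ref{cor:beck-nakajima-upper-triangularity}) land in the strictly-greater part of the partial order rather than breaking it. Everything else is bookkeeping or a direct appeal to \cite{BN}.
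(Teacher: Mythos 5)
Your high-level plan shares the paper's Step 3 (approximation by one-row orders via Lemma \ref{lem:eo}), but for the one-row case the paper takes a different route: it asserts that Beck--Nakajima's \emph{proof} of \cite[Proposition 3.36]{BN} carries over verbatim to an arbitrary one-row order, rather than fixing $\prec_0$ and propagating to other one-row orders by braid reflections. Your propagation strategy is conceptually natural, but as written it has a genuine gap.

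The gap is in Step 2, at the claim that $T_i$ ``commutes with the bar-involution up to the known correction'' or is ``compatible with bar on $U^+_q(s_i,+)$.'' Recall $T_i = T''_{i,1}$ in Lusztig's notation, and the actual relation is $\overline{T''_{i,1}(x)} = T''_{i,-1}(\overline{x})$; the operators $T''_{i,1}$ and $T''_{i,-1}$ do not agree, even after restriction to $U^+_q(s_i,+)$. (Concretely, they differ by conjugation by a $q$-power Cartan element, which does not preserve $\Uplus$ pointwise.) So applying $T_i$ to $\overline{L(\cc,\prec)} = L(\cc,\prec) + \sum_{\cc' > \cc} a_{\cc,\cc'}L(\cc',\prec)$ does \emph{not} directly yield $\overline{L(\cc\circ s_i, \prec^{s_i})} = L(\cc\circ s_i, \prec^{s_i}) + \cdots$; you would first need a lemma comparing $\overline{T_i(\,\cdot\,)}$ and $T_i(\overline{\,\cdot\,})$ on the relevant subspace, and this lemma is not stated or proved. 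Without it the induction over braid reflections does not close. A secondary (smaller) issue is that you invoke Corollary \ref{cor:beck-nakajima-upper-triangularity} to get $\cc'' \geq_\ell \cc$ and $\cc'' \geq_r \cc$, but the partial order $>$ requires one of those to be \emph{strict}; a priori there could be $\cc'' \neq \cc$ with $\cc''_S=\cc_S$, $\cc''_{S'}=\cc_{S'}$ but $\cc''_\delta\neq\cc_\delta$, which is $\geq_\ell$-equal and $\geq_r$-equal yet not $>\cc$. You do not rule these out. The paper sidesteps both issues by reusing Beck--Nakajima's bar-conjugation computation directly for an arbitrary one-row order, which never requires tracking the bar involution through a braid operator.

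If you want to salvage the propagation route, the missing ingredients are: (i) a precise statement relating $\overline{T_i(x)}$ and $T_i(\overline{x})$ for $x$ in the appropriate weight subspace of $U^+_q(s_i,+)$ (or a reduction that avoids needing it, e.g.\ along the lines of the proof of Theorem \ref{thm:saito-imag}); and (ii) an argument that excludes the equal-$\geq_\ell$, equal-$\geq_r$, different-$\cc_\delta$ terms (this is where the Schur-function multiplication of the $S^\lambda_i$, as used in Lemma \ref{lem:a-eq-zero}, would enter). Absent these, the proof does not go through.
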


\begin{Theorem} \label{th:UpperTriangularity}
For any convex order $\prec$, 
the change of basis from $L(\cdot, \prec)$ to the canonical basis $\B$ is unit upper triangular with respect to $>$. That is,
$$b(\cc, \prec)=L(\cc, \prec) + \sum_{\cc'>\cc} a_{\cc,\cc'} L(\cc',\prec).$$  
\end{Theorem}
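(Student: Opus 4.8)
The plan is to deduce Theorem \ref{th:UpperTriangularity} from the almost-orthonormality of the PBW basis together with the two triangularity statements already established: the bar-involution expansion (Proposition \ref{prop:bbar}) and the multiplicative upper-triangularity (Corollary \ref{cor:beck-nakajima-upper-triangularity}). The key point is that $>$ is an honest partial order on Lusztig data of a fixed weight (as noted after its definition), and it has finite down-sets: for a fixed weight $\lambda$ there are only finitely many Lusztig data of weight $\lambda$, so $>$ is automatically ``interval-finite'' and one can run the usual Kazhdan--Lusztig-type straightening argument.

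First I would recall the general linear-algebra lemma (essentially \cite[Lemma 24.2.1]{Lus} or \cite[Lemma 3.37]{BN}): if one has a free module with two bases indexed by a finite poset, a bar-involution fixing $q_s=1$, and the bar-involution is unitriangular with respect to the poset order, then there is a unique bar-invariant basis which is unitriangular with respect to the original basis. Concretely, set $b(\cc,\prec)$ to be the unique bar-invariant element of the form $L(\cc,\prec) + \sum_{\cc' > \cc} a_{\cc,\cc'} L(\cc',\prec)$ with $a_{\cc,\cc'} \in q_s^{-1}\ZZ[q_s^{-1}]$; existence and uniqueness follow from Proposition \ref{prop:bbar} by the standard inductive solution of the ``$\overline{a} - a \in$ (known stuff)'' recursion, working down the finite poset of Lusztig data of weight $\wt(\cc)$. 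The content then is to identify this $b(\cc,\prec)$ with the canonical basis element.

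To make that identification I would use Proposition \ref{prop-crystal-basis}: $\{L(\cc,\prec)\}$ is a crystal basis, i.e. it spans $\cL(-\infty)$ over $\mathcal{A}$ and descends to $B(-\infty)$ modulo $q_s^{-1}$. Hence each $b(\cc,\prec)$ constructed above lies in $\cL(-\infty)$, is bar-invariant, and is congruent to $L(\cc,\prec) \equiv$ (a crystal basis element) modulo $q_s^{-1}\cL(-\infty)$. By the uniqueness in Theorem \ref{th:defofB} (Kashiwara--Lusztig characterization of the canonical basis), $b(\cc,\prec)$ must be the canonical basis element $b \in \B$ with $b \equiv L(\cc,\prec) \bmod q_s^{-1}\cL$. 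This gives the asserted expansion $b(\cc,\prec) = L(\cc,\prec) + \sum_{\cc'>\cc} a_{\cc,\cc'} L(\cc',\prec)$, and unitriangularity (coefficient $1$ on $L(\cc,\prec)$) is built in.

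The main obstacle I anticipate is purely bookkeeping rather than conceptual: one must be careful that the bar-involution recursion actually terminates and has a unique solution, which requires knowing that $\{\cc' : \cc' > \cc,\ \wt(\cc') = \wt(\cc)\}$ is finite — this is true but deserves a one-line justification (a Lusztig datum of weight $\lambda$ has bounded real exponents and a multipartition of bounded size, so there are finitely many). A secondary subtlety is that Proposition \ref{prop:bbar} is only stated with coefficients in $\QQ(q_s)$, whereas the KL-style argument wants the strictly-upper-triangular part of $\overline{L(\cc,\prec)} - L(\cc,\prec)$ to already be integral (in $\ZZ[q_s,q_s^{-1}]$); this follows from Proposition \ref{prop-crystal-basis}, since the $L(\cc,\prec)$ form a $\ZZ[q_s,q_s^{-1}]$-basis of $\Uplus_{\mathscr{A}}$ and the bar-involution preserves that integral form, so in fact $a_{\cc,\cc'} \in \ZZ[q_s,q_s^{-1}]$ in Proposition \ref{prop:bbar}. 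With those two points pinned down, the proof is the standard argument verbatim.
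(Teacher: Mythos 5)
Your argument is correct and is essentially the paper's own proof: the paper likewise deduces from Proposition \ref{prop:bbar} (citing \cite[\S 5.1]{Lec} and \cite[Theorem 5.3]{LusNotes} for the standard bar-involution straightening lemma you re-derive, including the finiteness and integrality points) the existence of a unique bar-invariant basis unitriangular over $L(\cdot,\prec)$, and then identifies it with $\B$ via Theorem \ref{th:defofB} together with Proposition \ref{prop-crystal-basis}. Your mention of Corollary \ref{cor:beck-nakajima-upper-triangularity} is not actually needed, but this is harmless.
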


\begin{proof}
By e.g. \cite[\S 5.1]{Lec} (see also \cite[Theorem 5.3]{LusNotes}), Proposition \ref{prop:bbar} implies that there is a unique basis $\B'$ such that
\begin{enumerate}

\item $\text{span}_{{\Bbb Z}[q_s^{-1}]} \B' = \text{span}_{{\Bbb Z}[q_s^{-1}]}  L(\cdot, \prec)$. 

\item $\B' \equiv L(\cdot, \prec)$ mod $q_s^{-1}$

\item $\B'$ is bar-invariant. 
\end{enumerate}
and furthermore the change of basis from $L(\cdot, \prec)$ to $\B'$ is unit triangular. By Theorem \ref{th:defofB} $\B$ satisfies all three of these conditions  so $\B=\B'$.
\end{proof}

\begin{Proposition}{\label{CanonicalBasisCrystalFormula-prime}}
Fix $b \in \cB$. Write $b E^{(n)}_i  = \sum_{b^\prime} a_{b,b^\prime} b^\prime$. Then $a_{b,(\e_i^*)^n b} \neq 0$. Similarly, if we write $ E^{(n)}_i b = \sum_{b^\prime} \tilde{a}_{b,b^\prime} b^\prime$. Then $\tilde{a}_{b,(\e_i)^n b} \neq 0$.
\end{Proposition}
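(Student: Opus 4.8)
The plan is to prove the first statement ($a_{b,(\e_i^*)^n b} \neq 0$ when expanding $bE_i^{(n)}$), and then deduce the second by applying Kashiwara's involution $*$, which exchanges left and right multiplication and swaps $E_i \leftrightarrow E_i$, $\e_i \leftrightarrow \e_i^*$. So the essential content is the claim about $bE_i^{(n)}$.

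First I would reduce to a PBW computation using a well-chosen convex order. Pick a convex order $\prec$ whose greatest root is $\alpha_i$ (i.e.\ $i_{\text{right}} = i$); such an order exists by the classification (or by Lemma \ref{lem:eo} applied to build a one-row order with $\alpha_i$ maximal). Let $\cc$ be the Lusztig datum with $b = b(\cc,\prec)$ in the notation of Theorem \ref{th:UpperTriangularity}. By Proposition \ref{prop:cCS}(\ref{el2}), $\widetilde E_i^* L(\cc,\prec) = L(\cc^+,\prec)$ where $\cc^+$ increments $\cc_{\alpha_i}$ by one; iterating, $(\widetilde E_i^*)^n$ acts on PBW basis vectors by adding $n$ to the $\alpha_i$-exponent. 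On the crystal-lattice level this means that, modulo $q_s^{-1}\cL$, the class of $L(\cc,\prec) E_i^{(n)}$ (note $\alpha_i$ is the \emph{last} root, so right-multiplication by $E_i^{(n)}$ literally appends $E_i^{(n)}$ to the PBW monomial, giving exactly $L(\cc^{+n},\prec)$ on the nose, not just mod $q_s^{-1}$) equals $L(\cc^{+n},\prec)$, where $\cc^{+n}$ has $\cc_{\alpha_i}$ increased by $n$.

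Next I would transport this to the canonical basis. Since $\alpha_i$ is maximal, incrementing $\cc_{\alpha_i}$ is compatible with the order $>$: if $\cc' > \cc$ then $(\cc')^{+n} > \cc^{+n}$, and more importantly the triangularity of Theorem \ref{th:UpperTriangularity} combined with the exact identity $L(\cc,\prec)E_i^{(n)} = L(\cc^{+n},\prec)$ gives
\begin{align}
b(\cc,\prec) E_i^{(n)} = L(\cc^{+n},\prec) + \sum_{\cc' > \cc} a_{\cc,\cc'}\, L((\cc')^{+n},\prec).
\end{align}
Re-expanding each $L((\cc')^{+n},\prec)$ in the canonical basis $\B$ via the \emph{inverse} unit-triangular change of basis, the leading term $L(\cc^{+n},\prec)$ contributes $b(\cc^{+n},\prec)$ with coefficient $1$, and all other contributions involve $b(\cc'',\prec)$ with $\cc'' > \cc^{+n}$; since the correction terms from $\cc' > \cc$ only produce $\cc''$ that are $>$-above $(\cc')^{+n} > \cc^{+n}$, no cancellation with the leading $b(\cc^{+n},\prec)$ is possible. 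Hence $b(\cc^{+n},\prec)$ appears in $b(\cc,\prec)E_i^{(n)}$ with coefficient exactly $1 \neq 0$. Finally, I must identify $b(\cc^{+n},\prec)$ with $(\e_i^*)^n b$: by Proposition \ref{prop:cCS}(\ref{el2}) and Proposition \ref{prop-crystal-basis}, $\widetilde E_i^*$ acts on $\B$ (as a basis of $\cL/q_s^{-1}\cL$) by $\cc \mapsto \cc^+$, i.e.\ $(\e_i^*)^n b(\cc,\prec) = b(\cc^{+n},\prec)$, as $b(\cc,\prec)$ has $\varepsilon_i^*$-string of the relevant length. This gives $a_{b,(\e_i^*)^n b} \neq 0$.

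The main obstacle is the bookkeeping in the second-to-last step: one must be careful that re-expanding the triangular sum $\sum_{\cc' \geq \cc} a_{\cc,\cc'} L((\cc')^{+n},\prec)$ back into $\B$ does not accidentally cancel the coefficient of $b(\cc^{+n},\prec)$. This requires knowing that the map $\cc \mapsto \cc^{+n}$ is strictly order-preserving for $>$ with $\alpha_i$ maximal (so that $\cc' > \cc \Rightarrow (\cc')^{+n} > \cc^{+n}$, and the leading index $\cc^{+n}$ is genuinely minimal among all indices occurring), together with the fact that the change of basis between $L(\cdot,\prec)$ and $\B$ and its inverse are both unit upper-triangular with respect to $>$ (Theorem \ref{th:UpperTriangularity}). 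A secondary point to check is the existence of a convex order with $\alpha_i$ maximal that is one-row (or the reduction, via approximation, to such orders as in the proof of Proposition \ref{prop:cCS}), so that the exact identity $L(\cc,\prec)E_i^{(n)} = L(\cc^{+n},\prec)$ and Proposition \ref{prop:cCS} genuinely apply. Everything else is formal once these triangularity and order-compatibility facts are in place.
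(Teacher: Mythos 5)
Your argument is correct and follows essentially the same route as the paper's proof: fix a convex order with $\alpha_i$ maximal, use the unit upper triangularity of Theorem \ref{th:UpperTriangularity}, observe that right multiplication by $E_i^{(n)}$ sends each PBW vector to a (nonzero) multiple of the PBW vector with $\alpha_i$-exponent increased by $n$, and conclude by order-preservation that no cancellation can kill the leading term. The only small inaccuracy is the claim that $L(\cc,\prec)E_i^{(n)} = L(\cc^{+n},\prec)$ ``on the nose'' with coefficient exactly $1$: since $E_i^{(m)}E_i^{(n)} = \bigl[\begin{smallmatrix} m+n \\ n\end{smallmatrix}\bigr]_{q_i} E_i^{(m+n)}$, the product is only a nonzero scalar multiple of $L(\cc^{+n},\prec)$ when $\cc_{\alpha_i}>0$, which is all the argument needs (and is what the paper asserts).
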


\begin{proof}
Fix a convex order $\prec$ with $\alpha_i$ maximal. 
There is a unique Lusztig datum $\cc$ such that $b = b(\cc,\prec)$.  By Theorem \ref{th:UpperTriangularity},
\begin{align}
b(\cc,\prec) = L(\cc,\prec) + \sum_{\cc^\prime > \cc} a_{\cc,\cc^\prime} L(\cc^\prime,\prec).
\end{align}
Multiplying both sides by $E_i^{(n)}$ on the right, 
\begin{align}
b(\cc,\prec) E^{(n)}_i = L(\cc,\prec) E^{(n)}_i + \sum_{\cc^\prime > \cc} a_{\cc,\cc^\prime} L(\cc^\prime,\prec) E^{(n)}_i.
\end{align}
Since $\alpha_i$ is maximal, each $L(\dd,\prec) E^{(n)}_i$ is a multiple of  $(\e_i^*)^n L(\dd,\prec)$. Now rewrite each term using the canonical basis. Again using Theorem \ref{th:UpperTriangularity}, only $L(\cc,0)E^{(n)}_1$ will contribute to the coefficient of $ (\e_i^*)^n b(\cc,\prec)$, and its contribution is clearly non-zero. 

The other statement follows similarly.
\end{proof}

\section{PBW polytopes}

By Proposition \ref{prop-crystal-basis}, for each pair of convex orders $\prec$, $\prec'$ there is a bijection $\cc \leftrightarrow \cc'$ on Lusztig data such that
\begin{align}
 \label{eq:10}
L(\cc, \prec) \equiv  L(\cc', \prec) \mod q_s^{-1}.
\end{align}
The collection of all the Lusztig data (for all convex orders) corresponding to an element $b \in B(-\infty)$ fit together to form a decorated polytope. We study these polytopes, and show that they agree with previous definitions of affine MV polytope. 

For orders of the form $\prec_p$, Beck and Nakajima pose the question \cite[Remark 3.29]{BN} of describing this bijections \eqref{eq:10} combinatorially. Our construction shows that the answer is precisely recorded by affine MV polytopes.

\subsection{Construction}
Fix $b \in B(-\infty)$ and  $w \in W$. Recall that $\Delta^-_w$ denotes the set of positive roots that are inverted by $w$ and that $\Delta^+_w$ denotes the set of positive roots that are remain positive under $w$. Let $\prec$ be a convex order such that all elements of $\Delta^-_w$ are less than all elements of $\Delta^+_w$. Then there is a PBW basis vector $L(\cc,\prec)$ such that $L(\cc,\prec)+q_s^{-1} \mathcal{L}=b$. Moreover, because $\prec$ is compatible with the biconvex partition $(\Delta^-_w, \Delta^+_w)$, we see that $L(\cc,\prec) = L_1 L_2$ where $L_1$ is a monomial in root vectors for roots in $\Delta^-_w$, and $L_2$ is a monomial in root vectors for roots in $\Delta^+_w$. Set $\mu^+_w(b) = \text{wt}(L_1)$. Similarly, define 
$\mu^-_w(b)$ using the biconvex partition $(\Delta^+_w, \Delta^-_w)$.

\begin{Proposition} $\mu^+_w(b)$ (resp. $\mu^-_w(b)$) depend only on $(\Delta^-_w, \Delta^+_w)$ (resp. $(\Delta^+_w, \Delta^-_w)$), not additionally on $\prec$.
\end{Proposition}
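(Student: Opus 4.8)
The plan is to identify $\mu^+_w(b)$ with an invariant of $b$ that is manifestly independent of the compatible order $\prec$. I will treat $\mu^+_w$; the argument for $\mu^-_w$ is identical after exchanging the roles of $\Delta^-_w$ and $\Delta^+_w$. Recall from Proposition \ref{prop:lop}, together with the factorization $L(\cc,\prec)=L(\cc_{\Delta^-_w},\prec)\,L(\cc_{\Delta^+_w},\prec)$ available whenever $\prec$ is compatible with $(\Delta^-_w,\Delta^+_w)$, that the multiplication map
\begin{align*}
m\colon U^+_q(w,-)\otimes U^+_q(w,+)\longrightarrow \Uplus
\end{align*}
is an isomorphism carrying the product of the two sub-PBW-bases onto the full PBW basis $\{L(\cc,\prec)\}$ for any such $\prec$. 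For each weight $\mu$ I set $V_\mu := m\big(U^+_q(w,-)_\mu\otimes U^+_q(w,+)\big)=U^+_q(w,-)_\mu\cdot U^+_q(w,+)$; this is independent of $\prec$, one has $\Uplus=\bigoplus_\mu V_\mu$, and $L(\cc,\prec)\in V_{\wt(\cc_{\Delta^-_w})}$. So $\mu^+_w(b)$ computed from a given compatible $\prec$ is simply the unique $\mu$ with $L(\cc,\prec)\in V_\mu$, where $L(\cc,\prec)$ is the PBW vector lifting $b$, and the whole point becomes showing that this $\mu$ does not depend on $\prec$.

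The main step, and the step I expect to be the real obstacle, is to show that the crystal lattice is homogeneous for the $V_\mu$-grading:
\begin{align*}
\mathcal{L}(-\infty)=\bigoplus_\mu\big(\mathcal{L}(-\infty)\cap V_\mu\big),\qquad \mathcal{L}(-\infty)\cap V_\mu=\operatorname{span}_{\mathcal{A}}\{\,L(\cc,\prec):\wt(\cc_{\Delta^-_w})=\mu\,\}
\end{align*}
for every compatible $\prec$. To prove this I would use that $\{L(\cc,\prec)\}$ is a crystal basis (Proposition \ref{prop-crystal-basis}), hence an $\mathcal{A}$-basis of $\mathcal{L}(-\infty)$: given $x\in\mathcal{L}(-\infty)\cap V_\mu$, write $x=\sum_\cc a_\cc L(\cc,\prec)$ with $a_\cc\in\mathcal{A}$ and collect the terms according to the value of $\wt(\cc_{\Delta^-_w})$; these groups lie in distinct summands $V_\nu$, so $x\in V_\mu$ forces $a_\cc=0$ whenever $\wt(\cc_{\Delta^-_w})\neq\mu$, and the two displayed identities follow. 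Note this uses only that $\Delta^-_w$ is a prefix of $\prec$, so it simultaneously absorbs all variation of $\prec$ inside $\Delta^-_w$ and inside $\Delta^+_w$. In particular, if $\pi_\mu\colon\Uplus\to V_\mu$ denotes the projection attached to $\Uplus=\bigoplus_\nu V_\nu$, then $\pi_\mu(\mathcal{L}(-\infty))\subseteq\mathcal{L}(-\infty)$.

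To finish, let $\prec,\prec'$ be compatible orders and let $L(\cc,\prec)$, $L(\cc',\prec')$ be the PBW vectors with $L(\cc,\prec)+q_s^{-1}\mathcal{L}(-\infty)=b=L(\cc',\prec')+q_s^{-1}\mathcal{L}(-\infty)$; put $\mu=\wt(\cc_{\Delta^-_w})=\mu^+_w(b)$ via $\prec$ and $\mu'=\wt(\cc'_{\Delta^-_w})$ via $\prec'$. If $\mu\neq\mu'$, apply $\pi_\mu$ to $L(\cc,\prec)-L(\cc',\prec')\in q_s^{-1}\mathcal{L}(-\infty)$: since $\pi_\mu L(\cc,\prec)=L(\cc,\prec)$ and $\pi_\mu L(\cc',\prec')=0$, we get $L(\cc,\prec)\in q_s^{-1}\mathcal{L}(-\infty)\cap V_\mu=q_s^{-1}\big(\mathcal{L}(-\infty)\cap V_\mu\big)$, hence $b=L(\cc,\prec)+q_s^{-1}\mathcal{L}(-\infty)=0$, a contradiction. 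Thus $\mu=\mu'$, proving the Proposition. As an alternative to the lattice computation one could instead peel off $\Delta^-_w$ one simple reflection at a time, using Propositions \ref{prop:cCS} and \ref{prop:Saito} to derive the recursion $\mu^+_w(b)=\varphi_{i_1}(b)\,\alpha_{i_1}+s_{i_1}\mu^+_{s_{i_1}w}\big(\sigma_{i_1}\f_{i_1}^{\varphi_{i_1}(b)}b\big)$ for $\alpha_{i_1}$ the $\prec$-minimal root; but one then has to separately verify independence of the chosen reduced word of $w$, so the homogeneity argument above seems preferable.
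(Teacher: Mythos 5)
Your proof is correct, and it rests on the same two pillars as the paper's proof: Proposition \ref{prop:lop} (which makes $U^+_q(w,\pm)$, and hence your $V_\mu$, independent of the choice of compatible $\prec$) and Proposition \ref{prop-crystal-basis} (that every $\{L(\cdot,\prec)\}$ is an $\mathcal{A}$-basis of the crystal lattice). The difference is one of packaging: the paper argues directly, re-expanding $L_1$ and $L_2$ in the $\prec'$-basis, noting via Proposition \ref{prop:lop} that each stays supported on the correct set of roots, and then multiplying the resulting unit-triangular expansions; you instead first record the direct-sum decomposition $\Uplus=\bigoplus_\mu V_\mu$, prove the crystal lattice is homogeneous for it (so the projections $\pi_\mu$ preserve $\mathcal{L}(-\infty)$), and then conclude by projecting the relation $L(\cc,\prec)\equiv L(\cc',\prec')\pmod{q_s^{-1}\mathcal{L}}$. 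Your version makes the structural point — that $\mathcal{L}(-\infty)$ is graded by the $V_\mu$, a statement with independent interest — explicit, at the cost of an extra lemma; the paper's version gets to the answer more quickly and also identifies the $\prec'$-PBW vector representing $b$ as $L_1'L_2'$, which is slightly more information than just the equality of weights. Both are correct, and I would view yours as a mild reorganization of the same argument rather than a genuinely independent route. One small point worth keeping in mind if you write this up: to justify $q_s^{-1}\mathcal{L}(-\infty)\cap V_\mu=q_s^{-1}\bigl(\mathcal{L}(-\infty)\cap V_\mu\bigr)$ you are again using the homogeneity of $\mathcal{L}(-\infty)$, so it is clean to phrase the final step directly as $\pi_\mu$ preserving $q_s^{-1}\mathcal{L}(-\infty)$.
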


\begin{proof} 
It suffices to check this for $\mu^+_w(b)$. Let $\prec'$ be another convex order compatible with  $(\Delta^-_w, \Delta^+_w)$, and expand $L_1$ in the $\prec'$ PBW basis. By Proposition \ref{prop:lop} every basis element that appears involves only roots in $\Delta^-_w$, so it expands as $L_1'+ q_s^{-1} \sum p_j (q_s^{-1}) L_1^{(j)}$ for some $\prec'$ PBW basis element $L_1'$. Similarly, if we can expand $L_2$ as as $L_2'+ q_s^{-1} \sum p_j (q_s^{-1}) L_2^{(j)}$.
Multiplying these manifestly gives a sum of $\prec'$ PBW basis elements, and modulo $q_s^{-1}$ we get $L_1'L_2'$. Thus this is the factorization of $b$ using the PBW basis $\prec'$. Certainly $\wt(L'_1)= \wt(L_1)$.
\end{proof}

\begin{Lemma}{\label{lem:cone-condition}}
  Let $b \in B(-\infty)$, and let $\Delta_+= S_1 \sqcup S_2 = S'_1 \sqcup S'_2$ be two biconvex partitions. Then $\mu_{(S'_1,S'_2)}(b) - \mu_{(S_1,S_2)}(b)  \in \text{span}_{{\Bbb Z}_{\geq 0}} ( S_1 \cup - S_2 ).$ 
\end{Lemma}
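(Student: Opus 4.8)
The plan is to deduce this from the upper-triangularity of multiplication (Corollary \ref{cor:beck-nakajima-upper-triangularity}, or more directly Lemma \ref{lem:upper-triangularity-of-multiplication}) by choosing a single convex order that is simultaneously well-adapted to both biconvex partitions along a common refinement. First I would reduce to the case where the two partitions differ by moving a single root across the cut, i.e. $S_1' = S_1 \setminus \{\beta\}$ with $\beta$ being maximal in $S_1$ and minimal in $S_2'$ — or, dually, a single root moved the other way. Indeed, any two biconvex partitions of $\Delta_+$ can be connected by a finite chain of such elementary moves (this is the statement that the weak order on biconvex partitions / on $W$ together with its ``limit points'' is connected by elementary transpositions compatible with convexity), and the desired cone containment is additive along such a chain: if $\mu_{(S_1'',S_2'')}(b) - \mu_{(S_1',S_2')}(b) \in \operatorname{span}_{\ZZ_{\geq 0}}(S_1' \cup -S_2')$ and $\operatorname{span}_{\ZZ_{\geq 0}}(S_1' \cup -S_2') \subseteq \operatorname{span}_{\ZZ_{\geq 0}}(S_1 \cup -S_2)$ (which holds because $S_1' \subseteq S_1$ and $S_2' \supseteq S_2$ up to the boundary roots, whose signs work out correctly), then the containments compose.

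For the elementary move, pick a convex order $\prec$ compatible with the partition $(S_1, S_2)$ in which $\beta$ is the \emph{last} root of $S_1$, so that $\prec$ is also compatible with $(S_1', S_2')$ where $\beta$ now begins $S_2'$. Then $b = L(\cc,\prec) \bmod q_s^{-1}$ for a unique Lusztig datum $\cc$, and by the compatibility we can factor $L(\cc,\prec) = L_1 \cdot E_\beta^{\prec,(\cc_\beta)} \cdot L_2''$ where $L_1$ involves only roots in $S_1'$ and $L_2''$ only roots in $S_2$. Reading the factorization one way gives $\mu_{(S_1,S_2)}(b) = \wt(L_1) + \cc_\beta \beta$, and reading it for the partition $(S_1',S_2')$ gives $\mu_{(S_1',S_2')}(b) = \wt(L_1)$; hence the difference is $\cc_\beta \beta$. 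Now $\beta \in S_1$ (it is the boundary root, on the $S_1$ side) and $\cc_\beta \geq 0$, so $\mu_{(S_1,S_2)}(b) - \mu_{(S_1',S_2')}(b) = \cc_\beta \beta \in \operatorname{span}_{\ZZ_{\geq 0}}(S_1 \cup -S_2)$ — but for the Lemma as stated I want $\mu_{(S_1',S_2')}(b) - \mu_{(S_1,S_2)}(b) \in \operatorname{span}_{\ZZ_{\geq 0}}(S_1 \cup -S_2)$, and this equals $-\cc_\beta\beta$ with $\beta \in S_1$... so I must be careful about orientation: the correct elementary move to take is the one where a root $\beta \in S_2$ crosses over to become the last root of $S_1'$, giving $\mu_{(S_1',S_2')}(b) - \mu_{(S_1,S_2)}(b) = \cc_\beta \beta$ with $\beta \in S_2' \subseteq$ the new $S_1'$, hence in $\operatorname{span}_{\ZZ_{\geq 0}}(S_1 \cup -S_2)$ after checking $\beta$'s contribution; and when $\beta$ goes the other way, the term is $-\cc_\beta\beta$ with $\beta \in S_2$, which is again in $\operatorname{span}_{\ZZ_{\geq 0}}(S_1 \cup -S_2)$ since $-\beta$ with $\beta\in S_2$ is a generator. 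Either way the single-step difference is $\pm\cc_\beta \beta$ with the sign and membership matching a generator of the target cone, so the induction closes.

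The cleanest technical input I would actually invoke is Lemma \ref{lem:upper-triangularity-of-multiplication}: writing $\Delta_+ = S_1 \sqcup S_2$ and taking $S = S_1$ as a prefix of a compatible $\prec$, and similarly $S = S_1'$, the weight-shift bounds $\wt(\cc''_S) \geq \wt(\cc_S)$ there are exactly what forces $\mu_{(S_1,S_2)}(b)$ and $\mu_{(S_1',S_2')}(b)$ to differ by something in the positive span of $S_1 \cup -S_2$; one should be able to argue directly in one step by expressing $b$ via a convex order simultaneously refining both $S_1$ and $S_1'$ on their common part and tracking the weight of the ``left half'' under the change-of-basis, without the inductive reduction — but the inductive reduction to elementary moves is the safer route and I would present that. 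The main obstacle is the bookkeeping of signs and of which boundary roots land in which part, together with justifying that the lattice of biconvex partitions is connected by elementary moves in affine type (for one-row orders this is classical; for general convex orders it follows from the approximation-by-one-row-orders technique used repeatedly in the paper, e.g. in the proof of Proposition \ref{prop:cCS}). Everything else is a routine factorization argument.
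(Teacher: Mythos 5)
Your reduction rests on the claim that any two biconvex partitions of $\Delta_+$ are connected by a \emph{finite} chain of elementary moves in which a single root crosses the cut. In affine type this is false, and it fails exactly for the partitions the lemma must handle: the vertices of the PBW polytope are indexed both by pairs $(\Delta^-_w,\Delta^+_w)$ and by their reversals $(\Delta^+_w,\Delta^-_w)$, so the lemma is applied to pairs whose first parts have infinite symmetric difference (already $(\emptyset,\Delta_+)$ versus $(\Delta_+,\emptyset)$, and more generally any two partitions of different coarse types). No finite sequence of single-root crossings connects these, and your parenthetical appeal to connectedness of the weak order does not cover the infinite (``limit'') biconvex sets. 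Even where finite chains exist, your additivity step needs every intermediate cone to sit inside $\text{span}_{\ZZ_{\geq 0}}(S_1\cup -S_2)$, which holds only for monotone chains; since two biconvex partitions need not be comparable you would have to route through a common refinement, and the two halves of that route contribute generators of opposite sign --- your own back-and-forth about orientation is a symptom of this, and as written it is not resolved (for the move where $\beta$ passes from $S_2$ into $S_1'$ the difference is $+\cc_\beta\beta$ with $\beta\in S_2$, which is \emph{not} of the form ``a generator of $\text{span}_{\ZZ_{\geq 0}}(S_1\cup -S_2)$''). A limiting argument might repair the infinite chains, since only finitely many coordinates of any Lusztig datum are nonzero, but nothing of the sort appears in the proposal.

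The paper's proof needs no chain at all. It fixes convex orders $\prec$ and $\prec'$ compatible with $(S_1,S_2)$ and $(S'_1,S'_2)$ respectively, factors $L(\cc',\prec')=L(\cc'_{S'_1},\prec')\,L(\cc'_{S'_2},\prec')$, expands each factor in the $\prec$-basis and refactors every term according to $(S_1,S_2)$ using Proposition \ref{prop:lop}, and then straightens the middle product $L(\cc^{(1)}_{S_2},\prec)L(\cc^{(2)}_{S_1},\prec)$ via Lemma \ref{lem:upper-triangularity-of-multiplication}. Matching the coefficient of $L(\cc,\prec)$ produces the two weight identities \eqref{eq:33} and \eqref{eq:35}, from which the difference of the two $\mu$'s is manifestly the difference of a non-negative integral combination of roots in one part and a non-negative integral combination of roots in the other --- uniformly for all biconvex partitions, with no induction. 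Your final paragraph correctly identifies Lemma \ref{lem:upper-triangularity-of-multiplication} as the engine and even sketches this one-step argument, but you explicitly set it aside in favour of the chain reduction, and it is the chain reduction that does not close.
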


\begin{proof}
Let $\prec$ be a convex order such that $S_1 \prec S_2$. Similarly define $\prec'$. Let $\cc$ and $\cc'$ be the Lusztig data such that
\begin{align}
  \label{eq:28}
 b \equiv L(\cc,\prec) \equiv L(\cc',\prec').
\end{align}
We can factor
\begin{align}
  \label{eq:29}
 L(\cc,\prec) = L(\cc_{S_1},\prec)L(\cc_{S_2},\prec),  \\
 L(\cc',\prec) = L(\cc'_{S'_1},\prec)L(\cc'_{S'_2},\prec).  
\end{align}
Expanding in the $\prec$-basis, we have
\begin{align}
  \label{eq:30}
 L(\cc'_{S'_1},\prec') = \sum_{\cc^{(1)}} a_{\cc^{(1)}}L(\cc^{(1)}_{S_1},\prec) L(\cc^{(1)}_{S_2},\prec), \\ 
 L(\cc'_{S'_2},\prec') = \sum_{\cc^{(2)}} a_{\cc^{(2)}}L(\cc^{(2)}_{S_1},\prec) L(\cc^{(2)}_{S_2},\prec).
\end{align}
So, 
\begin{align}
  \label{eq:030}
 L(\cc'_{S'_1},\prec')L(\cc'_{S'_2},\prec') = \sum_{\cc^{(1)},\cc^{(2)}} a_{\cc^{(1)}}a_{\cc^{(2)}}L(\cc^{(1)}_{S_1},\prec) L(\cc^{(1)}_{S_2},\prec)  L(\cc^{(2)}_{S_1},\prec) L(\cc^{(2)}_{S_2},\prec).
\end{align}
We can expand
\begin{align}
  \label{eq:31}
L(\cc^{(1)}_{S_2},\prec)  L(\cc^{(2)}_{S_1},\prec) = \sum_{\cc''} b_{{\cc^{(1)}},{\cc^{(2)}}} ^{\cc''} L(\cc''_{S_1},\prec)L(\cc''_{S_2},\prec).
\end{align}
Combining this, 
\begin{align}
  \label{eq:32}
 L(\cc'_{S'_1},\prec')L(\cc'_{S'_2},\prec')   = \hspace{-0.5cm}  \sum_{\cc^{(1)},\cc^{(2)},\cc''}  \hspace{-0.4cm}  
  a_{\cc^{(1)}}a_{\cc^{(2)}}b_{{\cc^{(1)}},{\cc^{(2)}}} ^{\cc''} L(\cc^{(1)}_{S_1},\prec) L(\cc''_{S_1},\prec)L(\cc''_{S_2},\prec)L(\cc^{(2)}_{S_2},\prec).
\end{align}
When expanding in the $\prec$-basis $L(\cc',\prec)$ will appear, and, by Proposition \ref{prop:lop}, 
\begin{align}
  \label{eq:33}
 \wt( \cc_{S_1}) = \wt( \cc^{(1)}_{S_1}) + \wt( \cc''_{S_1}) 
\end{align}
for some choices of $\cc^{(1)}$,$\cc^{(2)}$, and $\cc''$.
By \eqref{eq:30},
\begin{align}
  \label{eq:35}
 \wt(\cc'_{S'_1}) = \wt(\cc^{(1)}_{S_1}) + \wt(\cc^{(1)}_{S_2}). 
\end{align}
Therefore
\begin{align}
  \label{eq:36}
  \mu_{(S'_1,S'_2)}(b) - \mu_{(S_1,S_2)}(b) = \wt(\cc'_{S'_1}) - \wt( \cc_{S_1}) =  \wt( \cc''_{S_1}) - \wt(\cc^{(1)}_{S_2}).  
\end{align}
\end{proof}

\begin{Definition}
Fix $b \in B(-\infty)$. The {\bf undecorated PBW polytope} 
$\overset{\circ}{\text{PBW}}_b$ is the convex hull of $\{ \mu^\pm_w(b) \}$.
\end{Definition}

It is immediate from Lemma \ref{lem:cone-condition} each $\mu^\pm_w(b)$ is a vertex of $\overset{\circ}{\text{PBW}}_b$.

\begin{Theorem} \label{thm:ispw} $\overset{\circ}{\text{PBW}}_b$ is an undecorated affine pseudo-Weyl polytope whose edge lengths record the real Lusztig data of all the PBW basis vectors corresponding to $b$. 
\end{Theorem}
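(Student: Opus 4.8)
The plan is to show two things: first, that $\oF{}_b := \overset{\circ}{\text{PBW}}_b$ is genuinely an affine pseudo-Weyl polytope in the sense of Section 3, i.e. that every edge is an integer multiple of a root; and second, that the edge lengths along the path associated to any convex order $\prec$ recover the real Lusztig data $\{\cc^\prec_\beta\}$ of the PBW vector $L(\cc,\prec)$ specializing to $b$.

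\medskip

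\textbf{Step 1: Edges and the path for a one-row order.} Fix a one-row convex order $\prec$ with associated bi-infinite sequence of roots $\cdots \prec \beta_{-1} \prec \beta_0 \prec \delta \prec \cdots \prec \beta_2 \prec \beta_1$. For each $w \in W$ whose inversion set $\Delta^-_w$ is a prefix (or the complement of a prefix) of $\prec$, the vertex $\mu^+_w(b) = \wt(L_1)$ is defined, where $b \equiv L(\cc,\prec) = L_1 L_2$ with $L_1 = E^{\prec,(\cc_{\beta_1})}_{\beta_1}\cdots E^{\prec,(\cc_{\beta_k})}_{\beta_k}$ a product over the roots in the prefix. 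Taking the nested chain of prefixes $\emptyset \subset \{\beta_1\} \subset \{\beta_1,\beta_2\} \subset \cdots$ (and symmetrically from the other end, and through $\delta$) produces a sequence of vertices of $\oF{}_b$ in which consecutive vertices differ by exactly $\cc_{\beta_j}\cdot \beta_j$. Hence these vectors lie in $\oF{}_b$, consecutive ones are joined by a segment that is an integer multiple of a root, and by Lemma~\ref{lem:cone-condition} each is in fact a vertex; moreover the increments, read off in decreasing $\prec$-order, pass through at most one edge parallel to each root. So for a one-row order the chain $\mu_0(\oF{}_b) = \mu^+_e(b), \ldots$ is literally the path $\oF{}_b{}^\prec$ of Lemma~\ref{lem:ispath}, and its edge parallel to $\beta$ has length $\cc_\beta = \coa^\prec_\beta(b)$, the real Lusztig datum. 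The imaginary part is absorbed in the single edge parallel to $\delta$, whose length is $\wt(\cc_\delta)$.

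\medskip

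\textbf{Step 2: The polytope is pseudo-Weyl, and consistency across orders.} A priori $\oF{}_b$ is the convex hull of the (possibly infinite) set $\{\mu^\pm_w(b)\}$. I would first observe that this set is finite up to the parts that matter: by Lemma~\ref{lem:near-end} (or directly, since $\cc_\beta = 0$ for all but finitely many real $\beta$), all but finitely many of the $\mu^+_w(b)$ coincide with one of two extreme vertices, so $\oF{}_b$ is a genuine polytope. Next, to see it is pseudo-Weyl I must check every edge, not just those along one one-row path, is an integer multiple of a root. The point is that by Lemma~\ref{lem:eo} any finite convex order on finitely many roots extends to a one-row order, so for any two biconvex partitions realized as adjacent prefixes of \emph{some} one-row order, the corresponding vertices are joined by an edge that is an integer multiple of a root by Step 1; and by the Proposition just proved, $\mu^\pm_w(b)$ depends only on $(\Delta^\mp_w, \Delta^\pm_w)$, so there is no ambiguity. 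Thus every edge of $\oF{}_b$ arises this way, and $\oF{}_b$ is an undecorated affine pseudo-Weyl polytope. Finally, for an arbitrary convex order $\prec$, Lemma~\ref{lem:eo} gives a one-row order $\prec'$ agreeing with $\prec$ on all pairs of roots of height at most $\langle\wt(b),\rho\rangle$; since $\cc^\prec_\beta(b) = \cc^{\prec'}_\beta(b)$ for the roots with nonzero data (Corollary~\ref{cor:onS+} and the definition of $L(\cc,\prec)$ for general $\prec$ via approximation), the path $\oF{}_b{}^\prec$ still records the real Lusztig data of $L(\cc,\prec)$.

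\medskip

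\textbf{Main obstacle.} The routine content is Step 1's bookkeeping. The real subtlety is Step 2: verifying that \emph{every} edge of the convex hull --- not just the ones I explicitly build as prefix-increments of one-row orders --- is parallel to a root and has integer length. I would handle this by arguing that an edge of $\oF{}_b$ between two vertices $\mu_{(S_1,S_2)}(b)$ and $\mu_{(S'_1,S'_2)}(b)$ of adjacent biconvex partitions can always be placed along some one-row path: one chooses a convex order on $\Delta_+^{\min}$ realizing both $(S_1,S_2)$ and $(S'_1,S'_2)$ as consecutive truncations (possible since the symmetric difference $S_1 \triangle S'_1$ is finite and ``convex'' in the appropriate sense, cf. the theory of biconvex sets), extends to one-row via Lemma~\ref{lem:eo}, and applies Step 1. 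Pinning down exactly which biconvex partitions are ``adjacent'' (i.e. correspond to actual edges rather than higher-dimensional faces) and checking the symmetric-difference-is-a-single-root case carefully is where the argument needs the most care; everything else is a consequence of Proposition~\ref{prop:lop}, Lemma~\ref{lem:cone-condition}, and Lemma~\ref{lem:ispath}.
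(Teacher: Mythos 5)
Your Step~1 is fine and roughly matches the implicit bookkeeping in the paper, but your Step~2 does not reach the ground. The paper's proof of the key claim (every edge is parallel to a root) is a short contradiction argument: if there were a non-degenerate edge not parallel to any root, take a linear functional $\phi$ whose maximal set on $\overset{\circ}{\text{PBW}}_b$ is that edge; since the edge is not parallel to a root, $\phi$ can be chosen to vanish on no root, so it defines a biconvex partition $(S_1,S_2)$; then Lemma~\ref{lem:cone-condition} shows every other vertex lies in $\mu_{(S_1,S_2)}(b) + \text{span}_{\mathbb{R}_+}(S_1 \cup (-S_2))$, on which $\phi$ is strictly negative, so the maximal set of $\phi$ is the single point $\mu_{(S_1,S_2)}(b)$, a contradiction. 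This completely bypasses any need to understand which biconvex partitions are ``adjacent.''

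Your approach instead tries to build edges out of prefix-increments of one-row orders and then argue that \emph{every} edge arises this way. That plan requires showing that any edge of the convex hull connects vertices $\mu_{(S_1,S_2)}(b)$ and $\mu_{(S_1',S_2')}(b)$ whose biconvex partitions differ by a single root, so that the two can be realized as consecutive truncations of some one-row order. This is precisely the step you flag as the ``main obstacle,'' and your sketch does not resolve it: you assume what you need (that the symmetric difference $S_1 \triangle S_1'$ is a single root, or at least that it can be traversed one root at a time with the intermediate vertices lying on the segment) rather than proving it. A priori the convex hull could have a phantom edge whose endpoints come from partitions differing by several roots, with the intermediate $\mu$'s sticking out and then collapsing back, and nothing in your Steps~1--2 rules this out. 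So the proposal, as written, has a genuine unfilled gap at exactly the crux of the theorem. I would recommend replacing Step~2 with the dual/contradiction argument via Lemma~\ref{lem:cone-condition}, which is both shorter and avoids the combinatorial analysis of adjacent biconvex partitions entirely. Your observations about finiteness of the vertex set and consistency across general convex orders via Corollary~\ref{cor:onS+} are correct and would still be useful complements to that argument.
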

\begin{proof} 
It remains to check that every edge is parallel to a root. So suppose we have a non-degenerate edge that is not parallel to a root. That edge is the maximal set of some linear functional $\phi$ which does not vanish on any root. Let $S_1$ (resp. $S_2$ be the set of positive roots on which $\phi$ takes negative values (resp. positive values). Then $(S_1, S_2)$ is a biconvex partition of the positive roots. 

Consider the vertex $\mu_{(S_1,S_2)}(b)$. Any other vertex of $\text{PBW}_b$ is of the form $\mu_{(S'_1,S'_2)}(b)$ for some other biconvex partition $(S'_1,S'_2)$. By Lemma \ref{lem:cone-condition}:
\begin{align}
  \label{eq:37}
\mu_{(S'_1,S'_2)}(b) \subset \mu_{(S_1,S_2)}(b)+ \text{span}_{{\Bbb R}_+} S_1 \cup (-S_2)
\end{align}
Therefore, we have $\phi\left(\mu_{(S'_1,S'_2)}(b)\right) < \phi\left(\mu_{(S_1,S_2)}(b)\right)$ unless $\mu_{(S'_1,S'_2)}(b) = \mu_{(S_1,S_2)}(b)$. So the maximal set of $\phi$ is a point, which contradicts our assumption.
\end{proof}

In fact the edges of $\overset{\circ}{\text{PBW}}_b$ parallel to $\delta$ are naturally decorated:
 for each $\bar w \in \bar W$,
 choose a convex order $\prec$ of that coarse type, and consider the imaginary part of the PBW monomial corresponding to $b$. This is indexed by a family of partitions $\{\lambda_{\bar w \bar \alpha_i}\}_{i \in \bar I}$. 
It is clear from Definition \ref{def:im-root-vectors} that these partitions are independent of the choice of $\prec$.  
\begin{Definition}
The PBW polytope $\PBW_b$ of $b \in B(-\infty)$ is $\overset{\circ}{\text{PBW}}_b$ along with the decoration described above. 
\end{Definition}
One can easily see that the conditions from Definition \ref{def:edge-decorated} relating edge lengths with the sizes of the partitions $\lambda_{\bar w \bar \alpha_i}$ are satisfied, so ${\text{PBW}}_b$ is an edge-decorated affine pseudo-Weyl polytope.

\subsection{Proof that PBW polytopes are MV polytopes} 

\begin{Theorem} \label{thm:PBWMV}
For each $b \in B(-\infty)$, $PBW_b$ arises from a facet-decorated polytope, and this agrees with the MV polytope $MV_b$ constructed in \cite{TW}. 
\end{Theorem}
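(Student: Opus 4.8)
The plan is to verify that the map $b \mapsto \mathrm{PBW}_b$ satisfies the three conditions (C), (S), and (I) of the characterization Theorem~\ref{th:MV-def}, after which the equality $\mathrm{PBW}_b = MV_b$ follows by uniqueness. First I would observe that, by construction, the polytopal real Lusztig datum $\cc^\prec_\beta(\mathrm{PBW}_b)$ is exactly the exponent $\cc_\beta$ in the PBW monomial $L(\cc,\prec)$ with $L(\cc,\prec)+q_s^{-1}\cL = b$ (this is the content of Theorem~\ref{thm:ispw}), and likewise the imaginary decoration $\cc^\prec_\delta(\mathrm{PBW}_b)$ is the multipartition $\cc_\delta$. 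So all three conditions become statements about how the Lusztig datum of $b$ changes under the crystal operators $\f_i, \f_i^*$ and under the reflection maps $\sigma_i, \sigma_i^*$ associated to moving $\alpha_i$ from one end of the order to the other.

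Conditions (C) and (S) should fall out directly from Proposition~\ref{prop:cCS}. Indeed, (C) for $\alpha_i$ minimal is precisely part \eqref{el1} of that Proposition: $\f_i$ (dually $\widetilde E_i$ up to the convention relating $B(-\infty)$ to the upper global basis) changes only the $\alpha_i$-exponent by one; the case $\alpha_i$ maximal is part \eqref{el2} together with \eqref{el5}. Condition (S) is part \eqref{el3}: when $\cc_{\alpha_i}=0$ and $\alpha_i$ is minimal, $T_i L(\cc,\prec) = L(\cc\circ s_i,\prec^{s_i})$, and by Proposition~\ref{prop:Saito} (Saito's result) $T_i$ descends on the relevant crystal lattice element to $\sigma_i$ — I would need to be a little careful here that $T_i(L(\cc,\prec))\in \U^+$, which holds exactly because $\cc_{\alpha_i}=0$ so no $E_i$ appears at the left end, so Saito's hypothesis is met. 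The dual statement uses \eqref{el4} and \eqref{el5}.

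The genuinely substantive part is condition (I), which is an essentially rank-one-imaginary computation: I need to understand how a purely imaginary PBW monomial $S^{\prec^{s_i},\underline\lambda}$ (for $\prec$ of standard coarse type, $\alpha_i$ minimal) decomposes in the PBW basis for $\prec$ after moving past $\alpha_i$, and to see that the largest part $\lambda^{(i)}_1$ is peeled off into the two real root vectors $E_{\alpha_i}$ and $E_{d_i\delta - r_i\alpha_i}$ with the stated multiplicities $r_i\lambda^{(i)}_1$ and $\lambda^{(i)}_1$. The right tool is the algebra embedding $h_i: U_q(\asl_2)\to U_q(\g)$ (or $U_q(\ATT)\to U_q(\g)$ in the $A^{(2)}_{2n}$, $i=n$ case): since $h_i$ respects the triangular decomposition, intertwines $T_1$ with $T_i$, and sends the relevant real root vectors and $\tpsi_{1,k}$ to the corresponding objects in $U_q(\g)$, the whole identity reduces to the analogous statement in affine rank two, which is the content of \cite[Lemma 4.4]{MT} (or can be re-derived there). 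The converse direction (``if $\cc^\prec(P_b)$ is of the form above then $\cc^{\preci}(P_b)$ is as above'') is the inverse of this computation. I expect this imaginary-root bookkeeping — in particular getting the $r_i$ and $d_i$ factors right uniformly across all affine types, including the twisted cases and the exceptional $A^{(2)}_{2n}$ behaviour — to be the main obstacle; everything else is a translation of Proposition~\ref{prop:cCS} through the dictionary between PBW data and polytope edge lengths.

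Finally, once (C), (S), (I) are verified, Theorem~\ref{th:MV-def} gives $\mathrm{PBW}_b = P_b$, and the ``furthermore'' clause of that theorem says every $P_b$ is induced from a facet-decorated polytope agreeing with $MV_b$ of \cite{TW}; combining these yields the claim. I would remark that, as noted after Theorem~\ref{th:MV-def}, this argument also lets one re-prove the uniqueness in Theorem~\ref{th:MV-def} using only PBW bases, since we have now exhibited an explicit map satisfying the conditions without reference to \cite{TW} — the only place \cite{TW} is still invoked is to identify the resulting object with the previously-constructed $MV_b$.
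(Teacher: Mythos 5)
Your overall strategy is the paper's: verify (C), (S), (I) for $b\mapsto \mathrm{PBW}_b$ and invoke Theorem~\ref{th:MV-def}. Your treatment of (C) and (S) via Proposition~\ref{prop:cCS} (together with Proposition~\ref{prop:Saito} to pass from $T_i$ to $\sigma_i$) matches the paper. But your treatment of condition (I) has a genuine gap: it does \emph{not} reduce to affine rank two via the embedding $h_i$. The purely imaginary PBW monomial is $S^{\prec,\underline\lambda}=\prod_{j}S^{\prec,\lambda^{(j)}}_j$, and only the $i$-th factor (and the root vectors $E_{k d_i\delta\pm r_i\alpha_i}$) lies in the image of $h_i$. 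The factors $S^{\prec,\lambda^{(j)}}_j$ for $j\neq i$ are not in that image, $T_i$ does not send them to PBW monomials for $\prec$, and after multiplying everything back together and re-expanding in the $\prec$-basis there can be nontrivial mixing. The rank-two input (\cite[Theorem 4.17]{MT}, not Lemma 4.4, which concerns only the individual $\tpsi_{i,k}$) gives you the leading term of $T_i S^{\prec,\lambda^{(i)}}_i$ alone; it does not by itself identify the Lusztig datum of $\sigma_i b(\underline\lambda,\prec)$, i.e.\ it does not tell you that the part peeled off is exactly $\lambda^{(i)}_1$ and that the remaining multipartition is $\underline\lambda$ with that part removed.

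What the paper actually does to close this is substantially more work. First, triangularity of multiplication in the PBW basis (Lemma~\ref{lem:upper-triangularity-of-multiplication}, Corollary~\ref{cor:beck-nakajima-upper-triangularity}) and a lexicographic monotonicity statement for the crystal operators (Lemma~\ref{lem:elex-prime}) are used to show (Lemmas~\ref{lem:trap}--\ref{lem:b-prime}) that $\sigma_i b(\underline\lambda,\prec)$ has real Lusztig data supported only on $\alpha_i$ and $d_i\delta-r_i\alpha_i$, with multiplicities $r_i a$ and $a$ for \emph{some} $a$, and some multipartition $\underline\mu$ at $\delta$. The identification $a=\lambda^{(i)}_1$ and $\underline\mu=\underline\lambda\setminus\lambda^{(i)}_1$ is then obtained by a global combinatorial argument (Lemma~\ref{lem:bi}): the assignment $\Phi_i:\underline\lambda\mapsto\underline\mu\sqcup_i a$ is shown to be an injective, increasing endomorphism of the finite set of multipartitions of a fixed weight (using the inequality $a'\leq a$ of Lemma~\ref{lem:aa}, the nonvanishing $\varphi_i^*>0$ of Lemma~\ref{lem:a-eq-zero} — whose proof handles the $j\neq i$ factors by induction on $\wt(\underline\lambda)$ together with the Schur-function multiplication \eqref{eq:53} — and a Pieri-rule computation with $E_i^{(r_i a)}$ combined with Proposition~\ref{CanonicalBasisCrystalFormula-prime}), hence is the identity. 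None of this inductive and finiteness machinery appears in your proposal, and without it the verification of (I) is incomplete.
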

To prove \ref{thm:PBWMV} it suffices to show that the map $b \rightarrow PBW_b$ satisfies the conditions of our theorem characterizing affine MV polytopes (Theorem \ref{th:MV-def}). Conditions (C) and (S) are immediate from Proposition \ref{prop:cCS}.
It remains to establish (I). This requires a few preliminary lemmas.

\begin{Lemma} \label{lem:elex-prime} Let $i\in \bar I$, and let $\prec$ be a convex order for which $\alpha_i$ is minimal.
 Let $i \in \bar I$, and suppose that $\prec$ is an order such that $\alpha_i \prec \delta$. 
Then, for all $b \in B(-\infty)$ we have $\cc^\prec(b) \leq_\ell\cc^\prec(\tilde e^*_i(b))$ and $\cc^{\prec^{s_i}}(b) \leq_r \cc^{\prec^{s_i}}(\tilde e_i(b))$. 
\end{Lemma}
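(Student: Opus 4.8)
The statement is about how the real Lusztig data of $b$ and of $\tilde e_i^*(b)$ compare in the preorders $\leq_\ell$ and $\leq_r$, so the natural tool is Corollary \ref{cor:beck-nakajima-upper-triangularity}, which controls exactly this kind of triangularity under multiplication of PBW basis vectors. The plan is to fix a convex order $\prec$ with $\alpha_i$ minimal, write $b = b(\cc,\prec)$ for the corresponding Lusztig datum $\cc$, and expand $b(\cc,\prec)$ in the PBW basis using Theorem \ref{th:UpperTriangularity}: $b(\cc,\prec) = L(\cc,\prec) + \sum_{\cc' > \cc} a_{\cc,\cc'} L(\cc',\prec)$. Applying $\tilde e_i^*$ amounts (up to a unit and lower-order terms, by Proposition \ref{CanonicalBasisCrystalFormula-prime}, or more directly by the crystal-operator description) to right multiplication by $E_i$ after conjugating to a convex order with $\alpha_i$ maximal; but here it is cleaner to use $\prec^{s_i}$. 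Concretely, by Proposition \ref{prop:cCS}(iii), $T_i$ intertwines $L(\cdot,\prec)$ with $L(\cdot,\prec^{s_i})$ (when the $\alpha_i$-entry vanishes), and $\tilde e_i^*$ corresponds under $T_i$/Proposition \ref{prop:Saito} to the $*$-raising operation, which on the $\prec^{s_i}$ side is $\tilde e_i^*$ with $\alpha_i$ maximal, i.e. right multiplication by $E_i$ in the PBW picture.

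\textbf{Key steps, in order.} First, reduce to a one-row order agreeing with $\prec$ up to the relevant height, exactly as in the proof of Proposition \ref{prop:cCS}, so that all statements involve finitely many weight spaces and the root vectors are the Chevalley generators at the ends. Second, since $\alpha_i$ is minimal for $\prec$ and $\alpha_i \prec \delta$, pass to $\prec^{s_i}$: by Corollary \ref{cor:simpleroots} and Proposition \ref{prop:cCS}(i) the operator $\tilde e_i^*$ adds one to the $\alpha_i$-component of the Lusztig datum for the order $\prec^{s_i}$ (where $\alpha_i$ is maximal). Third, combine this with the factorization $L(\cc,\prec) = L(\cc_{S_1},\prec) L(\cc_{S_2},\prec)$ along the prefix $S_1$ = (real roots $\prec \delta$): right-multiplying $b(\cc,\prec)$ by $E_i$ and re-expanding in the $\prec^{s_i}$-basis, Corollary \ref{cor:beck-nakajima-upper-triangularity} forces every $\cc''$ that appears to satisfy $\cc'' \geq_r (\text{the datum of } \tilde e_i^*(b))$ and $\cc'' \geq_\ell \cc$; picking out the canonical-basis term $b(\cc'',\prec^{s_i}) = \tilde e_i^*(b)$ and using that the change of basis (Theorem \ref{th:UpperTriangularity}) is unit-triangular for $>$, one reads off $\cc^\prec(b) \leq_\ell \cc^\prec(\tilde e_i^*(b))$. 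Fourth, the companion statement $\cc^{\prec^{s_i}}(b) \leq_r \cc^{\prec^{s_i}}(\tilde e_i(b))$ follows from the first by applying Kashiwara's involution $*$: by Proposition \ref{prop:cCS}(v) this swaps $\prec$ with $\prec^*$, swaps $\tilde e_i \leftrightarrow \tilde e_i^*$, and swaps $\geq_\ell \leftrightarrow \geq_r$ (since $S$ and $S'$ are interchanged under reversing the order), so the two halves are equivalent.

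\textbf{Main obstacle.} The delicate point is keeping track of \emph{which} order the inequalities are stated for and making sure the triangularity from Corollary \ref{cor:beck-nakajima-upper-triangularity} (which compares $\cc''$ to the \emph{left} factor in $\geq_\ell$ and the \emph{right} factor in $\geq_r$) is applied with the factors in the correct position when we right-multiply by $E_i$: since $\alpha_i$ is minimal for $\prec$ but we want $\tilde e_i^*$ (a right-multiplication-type operator), the clean setup really is on the $\prec^{s_i}$ side where $\alpha_i$ is maximal, and one must check carefully that $\tilde e_i^*$ on $B(-\infty)$ indeed corresponds to that operation — this is where Proposition \ref{CanonicalBasisCrystalFormula-prime} (that the expected leading term $(\tilde e_i^*)^n b$ appears with nonzero coefficient in $b E_i^{(n)}$) does the real work. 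The rest is bookkeeping with biconvex partitions and the definitions of $\geq_\ell,\geq_r$.
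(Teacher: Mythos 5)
Your proposal identifies the three correct ingredients --- the unitriangularity of the PBW-to-canonical change of basis (Theorem \ref{th:UpperTriangularity}), the triangularity of PBW multiplication (Corollary \ref{cor:beck-nakajima-upper-triangularity}), and the fact that $\tilde e_i^*(b)$ appears with nonzero coefficient in $b E_i$ (Proposition \ref{CanonicalBasisCrystalFormula-prime}) --- but the detour through $\prec^{s_i}$ that you build your argument around is both unnecessary and the source of a real gap. The paper proves the first inequality entirely inside the $\prec$-basis: write $b = L(\cc^\prec(b),\prec) + \text{($\geq_\ell$-greater terms)}$, right-multiply by $E_i$, re-expand the result again \emph{in the $\prec$-basis}, note that $E_i$ is the $\prec$-PBW vector whose datum is supported on the minimal root $\alpha_i$ so that Corollary \ref{cor:beck-nakajima-upper-triangularity} bounds every Lusztig datum appearing by $\geq_\ell \cc^\prec(b)$, pass through Theorem \ref{th:UpperTriangularity} once more to the canonical basis, and then read off the conclusion because Proposition \ref{CanonicalBasisCrystalFormula-prime} says $\tilde e_i^*(b)$ shows up in that expansion. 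Note in particular that Proposition \ref{CanonicalBasisCrystalFormula-prime} makes no reference to any convex order; it is a statement about multiplying canonical basis elements, so there is no need to ``conjugate to an order where $\alpha_i$ is maximal'' in order to make $\tilde e_i^*$ into right multiplication.

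The gap in your write-up is in the third step: if you re-expand $b E_i$ in the $\prec^{s_i}$-basis (as you propose), then Corollary \ref{cor:beck-nakajima-upper-triangularity} gives you lexicographic control on the data $\cc^{\prec^{s_i}}(\cdot)$, not on $\cc^\prec(\cdot)$, so the sentence ``one reads off $\cc^\prec(b) \leq_\ell \cc^\prec(\tilde e_i^*(b))$'' does not follow from what precedes it. Relatedly, ``$\cc''\geq_r(\text{the datum of }\tilde e_i^*(b))$'' is circular as stated; the Corollary bounds $\cc''$ from below by the datum of the \emph{right factor} $E_i$ (namely the datum supported only on $\alpha_i$), not by the datum of $\tilde e_i^*(b)$, which is what you are trying to control. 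You also cite Proposition \ref{prop:cCS}(i) where you mean (ii). Your final step --- deducing the second inequality from the first by applying Kashiwara's involution $*$, which swaps $\prec$ with $\prec^*$, $\tilde e_i$ with $\tilde e_i^*$, and $\geq_\ell$ with $\geq_r$ --- is a perfectly valid alternative to the paper's ``by a similar argument,'' provided you apply the first claim to the order $(\prec^{s_i})^*$ rather than to $\prec$ itself.
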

\begin{proof}
Identify $b$ with the corresponding canonical basis element. By Theorem \ref{th:UpperTriangularity}, 
$$b= L(\cc^\prec(b), \prec)+ \text{ $\geq_\ell$ greater PBW  terms,}$$
so 
$$
\begin{aligned}
b E_i= L(\cc^\prec(b), \prec) E_i+  (\text{  $\geq_\ell$ greater PBW terms })E_i.
\end{aligned}$$
Now re-expand this in the PBW basis. By Corollary \ref{cor:beck-nakajima-upper-triangularity}, all Lusztig data that appear are $ \geq_\ell \cc$. 
Using Theorem \ref{th:UpperTriangularity} again, when $b E_i $ is expanded in the canonical basis, all Lusztig data that appear are still $ \geq_\ell \cc$. By Proposition \ref{CanonicalBasisCrystalFormula-prime} $L(\cc^\prec(\tilde e^*_i(b), \prec)$ must appear in this sum, so $\cc^\prec(b) \leq_\ell\cc^\prec(\tilde e^*_i(b))$.

By a similar argument, we also have the other inequality.
\end{proof}

\begin{Lemma} \label{lem:trap}
Let $i\in \bar I$, and let $\prec$ be a convex order for which $\alpha_i$ is minimal. Let $\underline \lambda$ be a multipartition, and suppose that $b \in B(-\infty)$ is such that $\cc^\prec(b)= \underline \lambda$.  Then 
$\cc_{\alpha_i}^{\preci}(b)= r_i \cdot \cc_{d_i\delta-r_i\alpha_i}^{\preci}(b)$, and $\cc^{\preci}_\beta(b)=0$ for all other real roots $\beta$. 

We also have $\cc_{\alpha_i}^{\prec}(\sigma_ib)= r_i \cdot \cc_{d_i\delta-r_i\alpha_i}^{\prec}(\sigma_ib)$ and $\cc^{\prec}_\beta(\sigma_ib)=0$ for all other real roots $\beta$.
\end{Lemma}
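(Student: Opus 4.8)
The plan is to exploit the isomorphisms $h_i$ from $U_q(\asl_2)$ (or $U_q(\ATT)$ in the exceptional case) into $U_q(\g)$ to reduce the statement to the corresponding fact in affine rank two, which was proven in \cite{MT}. First I would observe that, because $\alpha_i$ is minimal for $\prec$, Corollary \ref{cor:can-real} and Theorem \ref{thm:saito-imag} tell us that all the root vectors $E^\prec_{k d_i\delta \pm r_i \alpha_i}$ and the imaginary vectors $S^{\prec,\underline\lambda}$ lie in the image of $h_i$; more precisely they are the images under $h_i$ of the corresponding vectors for the convex order on $\asl_2$ (resp.\ $\ATT$) with $\alpha_1$ minimal. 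Since $h_i$ respects the triangular decomposition and the integral structure, a PBW basis element of $U_q(\asl_2)$ that is purely imaginary (a Schur-function monomial $S^\lambda_1$) maps to $S^{\prec,\underline\lambda}$-type elements, so the rank-two computation transports verbatim.

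The key steps, in order, are: (1) Show that the hypothesis $\cc^\prec(b) = \underline\lambda$ forces $b$ to be (mod $q_s^{-1}$) $L(\underline\lambda,\prec) = S^{\prec,\underline\lambda}$ up to the upper-triangularity of Theorem \ref{th:UpperTriangularity}; in particular the canonical basis element $b$ equals $S^{\prec,\underline\lambda}$ plus terms involving strictly larger Lusztig data. (2) Apply $\sigma_i$ using Proposition \ref{prop:Saito}/Proposition \ref{prop:cCS}\eqref{el3}: since $\cc_{\alpha_i}(b)=0$ we have $T_i(S^{\prec,\underline\lambda}) = S^{\prec^{s_i},\underline\lambda}$, and $T_i$ carries the $\prec$-PBW basis to the $\prec^{s_i}$-PBW basis compatibly with the triangular order, so $\sigma_i(b)$ again has the shape ``purely imaginary leading term plus larger terms'' with respect to $\prec^{s_i}$. (3) Invoke the rank-two result \cite[Lemma 4.4 / Section 4]{MT}: for $\asl_2$ (or $\ATT$), the MV polytope / PBW data of the element with purely imaginary data $\lambda$ with respect to the order $\alpha_1 \prec \delta$ has, with respect to the reversed-at-$\alpha_1$ order $\prec^{s_1}$, real Lusztig data concentrated on $\alpha_1$ and $d\delta - r\alpha_1$ in the ratio $r:1$ given by the largest part $\lambda_1$, with the largest part stripped from the imaginary part. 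Pull this back through $h_i$ to get the first displayed equation of the lemma. (4) For the second displayed equation, note that $\cc^{\prec}(\sigma_i b)$ is computed from $\cc^{\preci}(b)$ by the same $T_i$-transport in the other direction, or equivalently apply the already-established first part with the roles of $\prec$ and $\preci$ interchanged (both have $\alpha_i$ extremal and differ by $s_i$), using that $\sigma_i$ is essentially an involution up to the crystal-operator corrections of Proposition \ref{prop:Saito}.

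The main obstacle I expect is step (1): carefully justifying that $\cc^\prec(b)=\underline\lambda$ — i.e.\ all real Lusztig coordinates vanish — really does pin down $b$ as the imaginary PBW vector $S^{\prec,\underline\lambda}$ modulo higher terms, and that when we apply $\sigma_i$ the terms that are $\geq_\ell$ or $\geq_r$ larger cannot, after the reflection $\prec \to \preci$, contaminate the coordinates on $\alpha_i$ and $d_i\delta - r_i\alpha_i$. This is where Lemma \ref{lem:elex-prime} together with Corollary \ref{cor:beck-nakajima-upper-triangularity} must be deployed: the lexicographic-type inequalities control exactly which real coordinates can appear, and one must check that the ``leading'' contribution — the one that survives mod $q_s^{-1}$ — is governed purely by the rank-two subalgebra $h_i(U_q(\asl_2))$. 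The secondary subtlety is the exceptional type $A^{(2)}_{2n}$ with $i=n$, where $r_i=2$, $\asl_2$ is replaced by $\ATT$, and the recursive polynomial $P_\lambda$ differs; but \cite{MT} handles that rank-two case as well, so it is a bookkeeping rather than a conceptual issue.
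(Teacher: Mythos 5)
There is a genuine gap, and your route is not the paper's. The paper proves this lemma by a short, purely crystal-theoretic contradiction argument with no rank-two reduction at all: supposing $\cc^{\preci}_\beta(b)\neq 0$ for some real $\beta$ other than $\alpha_i$ and $d_i\delta-r_i\alpha_i$ (say $\beta\succ^{s_i}\delta$), it notes that $\cc^{\preci}(\sigma_i(b))=\underline\lambda$ is purely imaginary (via Corollary \ref{cor:im-sym} and Proposition \ref{prop:Saito}), writes $\sigma_i(b)=\e_i^{\varphi_i^*(b)}(\f_i^*)^{\varphi_i^*(b)}b$, observes that the $\f_i^*$'s change only the $\alpha_i$-coordinate (since $\alpha_i$ is maximal for $\preci$), and then uses the lexicographic monotonicity of Lemma \ref{lem:elex-prime} to conclude that $\sigma_i(b)$ still has a nonzero real coordinate at some $\beta'\succeq^{s_i}\beta$ --- a contradiction. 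The ratio $\cc^{\preci}_{\alpha_i}(b)=r_i\cdot\cc^{\preci}_{d_i\delta-r_i\alpha_i}(b)$ then comes for free because $\wt(b)$ is a multiple of $\delta$.

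Your plan has two concrete problems. First, $S^{\prec,\underline\lambda}=\prod_{j}S^{\prec,\lambda^{(j)}}_j$ does \emph{not} lie in the image of $h_i$ unless $\lambda^{(j)}=\emptyset$ for all $j\neq i$; only the factor $S^{\prec,\lambda^{(i)}}_i$ does, so the rank-two computation cannot be ``transported verbatim.'' Second, and more seriously, the step you flag as ``the main obstacle'' is the entire content of the lemma, and your plan offers no mechanism to overcome it: the rank-two answer for $T_iS_i^{\prec,\lambda^{(i)}}$ determines the leading PBW term only modulo correction terms (the decomposition $T_i S^{\prec,\lambda^{(i)}}_i = L(\dd,\prec)+y_{re}+y_{im}$ used in Lemma \ref{lem:a-eq-zero}), and controlling how those, together with the $j\neq i$ factors and the strictly larger terms from Theorem \ref{th:UpperTriangularity}, contaminate $\cc^{\preci}(b)$ is exactly what must be proved. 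Note also that your step (3) targets a strictly stronger statement than the lemma: that the common value is $\lambda^{(i)}_1$ and that the largest part is stripped from $\lambda^{(i)}$ is condition (I) of Theorem \ref{th:MV-def}, which the paper only obtains later via Lemmas \ref{lem:a-eq-zero}--\ref{lem:bi}, an induction on weight, and the Pieri rule. Lemma \ref{lem:trap} deliberately asserts only the support and the ratio, and that weaker statement is what the soft argument above delivers; attempting to get it as a corollary of the full rank-two picture puts the cart before the horse.
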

\begin{proof}
Assume to the contrary that $\cc_\beta^{\preci}(b) \neq 0$ for some real $\beta$ other then $\alpha_i, d_i \delta-r_i\alpha_i$. 
Without loss of generality may assume $\beta \succ^{s_i} \delta$. 

Since $\cc^\prec(b) = \underline \lambda$ we must have $\cc^\preci(\sigma_i(b)) = \underline \lambda$. But
\begin{align}
  \label{eq:4}
\sigma_i(b)= {\tilde e}_i^{\varphi_i^*(b)} ({\tilde f}_i^*)^{\varphi_i^*(b)} b.
\end{align}
Certainly $\cc_{\beta}^{\preci}( (f_i^*)^{\varphi_i^*(b)} b) \neq 0$, as all Lusztig data with respect to $\preci$ agree with those for $b$ except $\alpha_i$. Thus by 
Lemma \ref{lem:elex-prime}, $ \cc_{\beta'}^{\preci}(e_i^{\varphi_i^*(b)} (f_i^*)^{\varphi_i^*(b)} b) \neq 0$ as well for some $\delta \preci \beta \preceq^{s_i} \beta' $. But this contradicts  $\cc^\preci(\sigma_i(b)) = \underline \lambda$.

The statements for $\sigma_ib$ follow by a similar argument.
\end{proof}

Fix $\prec$ with $\alpha_i$ minimal. Fix a multipartition $\underline \lambda$, and choose $b \in B(-\infty)$ with $\cc_\delta^\prec(b)=\underline \lambda$ and $\cc_\beta^\prec(b) = 0$ for all real roots $\beta$. By Lemma \ref{lem:trap} we have $\cc_{\alpha_i}^{\preci}(b)= r_i \cdot \cc_{d_i\delta-r_i\alpha_i}^{\preci}(b)=r_i \cdot a$ for some non-negative integer $a$, and $\cc^{\preci}_\beta(b)=0$ for all other real roots $\beta$. 

\begin{Lemma}{\label{lem:equality-of-phis}}
 In the above situation, we have  $\varphi_i(\sigma_ib) = \varphi_i^*(b) = r_i \cdot a$.
\end{Lemma}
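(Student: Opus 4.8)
The plan is to prove the two equalities $\varphi_i^*(b)=r_i\cdot a$ and $\varphi_i(\sigma_i b)=\varphi_i^*(b)$ more or less separately: the first is a bookkeeping consequence of Lemma~\ref{lem:trap} and the dictionary between Kashiwara operators and PBW exponents at an extreme root of a convex order, while the second needs a short argument combining Saito's reflection formula \eqref{eq:4} with the compatibility conditions of Proposition~\ref{prop:comb-char}.

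First I would record two elementary facts. Since $\cc^\prec(b)=\underline\lambda$ is purely imaginary, $\wt(b)\in\Z_{\ge 0}\,\delta$, so $\langle\wt(b),\alpha_i^\vee\rangle=0$ for $i\in\bar I$; hence $\eps_i(b)=\varphi_i(b)$ and $\eps_i^*(b)=\varphi_i^*(b)=:m$, and Saito's formula reads $\sigma_i b=\e_i^{\,m}(\f_i^*)^{m}b$ exactly as in \eqref{eq:4}. Next, $\alpha_i$ is minimal for $\prec$ and maximal for $\preci$, so Definition~\ref{ctLusztig} together with Remark~\ref{rem:GCdata} (applicable since conditions (C) and (S) already hold for $\PBW_b$) give $\varphi_i(b)=\cc^\prec_{\alpha_i}(b)=0$ and $\varphi_i^*(b)=\cc^{\preci}_{\alpha_i}(b)$; by Lemma~\ref{lem:trap} the latter equals $r_i a$. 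This settles $\varphi_i^*(b)=r_i a$.

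For $\varphi_i(\sigma_i b)=m$ I would work down the $i^*$-string of $b$. Put $b_j=(\f_i^*)^{j}b$ for $0\le j\le m$ (all nonzero), so $\varphi_i^*(b_j)=m-j$ and $\langle\wt(b_j),\alpha_i^\vee\rangle=-2j$, whence $\varphi_i(b_j)+\varphi_i^*(b_j)-\langle\wt(b_j),\alpha_i^\vee\rangle=\varphi_i(b_j)+m+j\ge j\ge 1$ for $j\ge 1$. The crucial step is then that condition \eqref{ccc4} of Proposition~\ref{prop:comb-char}, applied to $b_j$ (using $\e_i^*b_j=b_{j-1}$), gives $\varphi_i(b_{j-1})=\varphi_i(b_j)$; telescoping yields $\varphi_i(b_m)=\varphi_i(b_0)=\varphi_i(b)=0$. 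Consequently $\eps_i(b_m)=\varphi_i(b_m)-\langle\wt(b_m),\alpha_i^\vee\rangle=2m$, so $\e_i^{\,m}b_m\neq 0$, and since each application of $\e_i$ raises $\varphi_i$ by exactly one, $\varphi_i(\sigma_i b)=\varphi_i(\e_i^{\,m}b_m)=\varphi_i(b_m)+m=m=\varphi_i^*(b)$.

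The main obstacle is that middle step: showing that the bottom $b_m$ of the $i^*$-string through $b$ is still killed by $\f_i$. This is precisely where the genuine interaction between the $i$- and $i^*$-crystal structures enters, and it is resolved by condition \eqref{ccc4} once one checks that the quantity $\varphi_i+\varphi_i^*-\langle\wt,\alpha_i^\vee\rangle$ stays positive all the way down the string. Everything else — the weight computations, the use of \eqref{eq:4}, and reading off $\varphi_i,\varphi_i^*$ from PBW data at the extreme roots of $\prec$ and $\preci$ — is routine given the results already in place.
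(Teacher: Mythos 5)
Your proof is correct. The first equality is obtained exactly as in the paper: $\alpha_i$ is maximal for $\preci$, so $\varphi_i^*(b)=\cc^{\preci}_{\alpha_i}(b)=r_i a$ by Lemma \ref{lem:trap} and Proposition \ref{prop:cCS}. For the second equality the paper also starts from Saito's formula \eqref{eq:4} and the fact that $\e_i$ raises $\varphi_i$ by one, writing $\varphi_i(\sigma_i b)=\varphi_i^*(b)+\varphi_i\bigl((\f_i^*)^{\varphi_i^*(b)}b\bigr)\ge\varphi_i^*(b)$, but then gets the reverse inequality from ``a similar argument'' (i.e.\ the symmetric computation applied to $\sigma_i^{-1}$), rather than evaluating the leftover term. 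You instead prove directly that $\varphi_i\bigl((\f_i^*)^{m}b\bigr)=0$, by first noting $\varphi_i(b)=\cc^\prec_{\alpha_i}(b)=0$ (purely imaginary Lusztig datum, $\alpha_i$ minimal) and then propagating $\varphi_i$ down the $*$-string via axiom \eqref{ccc4} of Proposition \ref{prop:comb-char}, after checking that $\varphi_i+\varphi_i^*-\langle\wt,\alpha_i^\vee\rangle$ stays $\ge 1$ along the string. This is a strictly sharper conclusion (it pins down the correction term as zero rather than sandwiching), at the cost of invoking the Kashiwara--Saito axioms and the observation $\varphi_i(b)=0$, which the paper's symmetric argument avoids. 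Both routes are sound; yours makes explicit what the paper leaves to the reader.
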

\begin{proof}
For the second equality, we compute $\varphi_i^*(b)= \cc^{\preci}_{\alpha_i}(b) = r_i a$.
 We know that $\sigma_i b = {\tilde e}_i^{\varphi_i^*(b)} ({\tilde f}_i^*)^{\varphi_i^*(b)} b
$, so
\begin{align}
  \label{eq:16}
  \varphi_i(\sigma_i b) = \varphi_i^*(b) + \varphi_i\left( ({\tilde f}_i^*)^{\varphi_i^*(b)} b \right).
\end{align}
In particular, $\varphi_i(\sigma_i b) \geq   \varphi_i^*(b)$. A similar argument establishes $\varphi_i(\sigma_i b) \leq   \varphi_i^*(b)$.
\end{proof}

\begin{Lemma}{\label{lem:traptoparallel-prime}}
  We have
   $\cc^{\prec}_{d_i\delta-r_i\alpha_i}(\sigma_i(b)) = a$, $\cc^{\prec}_{\alpha_i}(\sigma_i(b)) = r_i \cdot a$, and  $\cc^{\prec}_{\beta}(\sigma_i(b)) = 0$ for all other real roots $\beta$.
\end{Lemma}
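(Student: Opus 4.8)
The plan is to track Lusztig data through the formula $\sigma_i(b) = \tilde{e}_i^{\varphi_i^*(b)}(\tilde{f}_i^*)^{\varphi_i^*(b)} b$, using the compatibility of the PBW basis with braid group reflections. By Lemma \ref{lem:trap} applied to $\prec^{s_i}$ (where $\alpha_i$ is maximal), we already know that $\cc^{\prec}_{\alpha_i}(\sigma_i b) = r_i \cdot \cc^{\prec}_{d_i\delta - r_i\alpha_i}(\sigma_i b)$ and $\cc^{\prec}_\beta(\sigma_i b) = 0$ for all other real roots $\beta$; here I must pass from $\prec^{s_i}$ to $\prec$ via $\sigma_i$, which interchanges these two roles since $(\prec^{s_i})^{s_i} = \prec$. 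So the content of the lemma is the single numerical claim $\cc^{\prec}_{d_i\delta - r_i\alpha_i}(\sigma_i b) = a$, i.e. that the one remaining real exponent equals $a$ and not something larger.

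**Key steps.** First I would record from Lemma \ref{lem:equality-of-phis} that $\varphi_i(\sigma_i b) = r_i a$, and note that since $\alpha_i$ is minimal for $\prec$, Proposition \ref{prop:cCS}(i) gives $\cc^{\prec}_{\alpha_i}(\sigma_i b) = \varphi_i(\sigma_i b) = r_i a$. Combined with the ratio $\cc^{\prec}_{\alpha_i}(\sigma_i b) = r_i \cdot \cc^{\prec}_{d_i\delta - r_i\alpha_i}(\sigma_i b)$ just established, this forces $\cc^{\prec}_{d_i\delta - r_i\alpha_i}(\sigma_i b) = a$ immediately. So the real work is entirely in justifying $\cc^{\prec}_{\alpha_i}(\sigma_i b) = \varphi_i(\sigma_i b)$, which is the statement that for a convex order with $\alpha_i$ minimal, the $\alpha_i$-exponent of the PBW datum of any crystal element is its $\varphi_i$; this follows from Proposition \ref{prop:cCS}(i), since repeatedly applying $\tilde{f}_i$ to $\sigma_i b$ strictly lowers $\cc_{\alpha_i}$ by one until it reaches an element killed by $\tilde{f}_i$, which must therefore have $\cc_{\alpha_i} = 0$.

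**Main obstacle.** The subtle point is not the arithmetic but making sure the reduction from $\prec^{s_i}$ to $\prec$ is clean: I need that if $\cc^{\prec^{s_i}}(b)$ has real support only on $\{\alpha_i, d_i\delta - r_i\alpha_i\}$ with the stated ratio, then $\cc^{\prec}(\sigma_i b)$ has real support only on $\{\alpha_i, d_i\delta - r_i\alpha_i\}$ as well. This is where I'd invoke condition (S)-type behavior of PBW data — i.e., Proposition \ref{prop:cCS}(iii)–(iv) — to see that $\sigma_i$, being built from $\tilde{e}_i$ and $(\tilde{f}_i^*)$, affects only the $\alpha_i$-exponent when $\prec^{s_i}$ has $\alpha_i$ maximal (using that $s_i$ fixes $d_i\delta - r_i\alpha_i$ up to the relevant root system, or more carefully, permutes the support appropriately); the key input is precisely Lemma \ref{lem:trap}'s second assertion about $\sigma_i b$, which I can cite directly. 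The honest danger is an off-by-one or a coarse-type bookkeeping error in identifying $(\prec^{s_i})^{s_i}$ with $\prec$ and in tracking which of $\alpha_i$, $d_i\delta - r_i\alpha_i$ is minimal versus maximal; once that is pinned down the proof is two lines.

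\begin{proof}
By Lemma \ref{lem:trap}, applied with the order $\preci$ (for which $\alpha_i$ is maximal) in place of $\prec$, together with the fact that $(\preci)^{s_i} = \prec$, we have that $\cc^{\prec}_\beta(\sigma_i b) = 0$ for all real roots $\beta$ other than $\alpha_i$ and $d_i\delta - r_i\alpha_i$, and that $\cc^{\prec}_{\alpha_i}(\sigma_i b) = r_i \cdot \cc^{\prec}_{d_i\delta - r_i\alpha_i}(\sigma_i b)$.

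It remains to identify $\cc^{\prec}_{\alpha_i}(\sigma_i b)$ with $r_i \cdot a$. Since $\alpha_i$ is minimal for $\prec$, Proposition \ref{prop:cCS}\eqref{el1} shows that applying $\tilde f_i$ to any crystal element decreases $\cc^\prec_{\alpha_i}$ by $1$ and leaves all other real Lusztig data unchanged, until one reaches an element annihilated by $\tilde f_i$; such an element has $\cc^\prec_{\alpha_i} = 0$. Hence $\cc^{\prec}_{\alpha_i}(\sigma_i b) = \varphi_i(\sigma_i b)$. By Lemma \ref{lem:equality-of-phis}, $\varphi_i(\sigma_i b) = \varphi_i^*(b) = r_i \cdot a$, so $\cc^{\prec}_{\alpha_i}(\sigma_i b) = r_i \cdot a$ and therefore $\cc^{\prec}_{d_i\delta - r_i\alpha_i}(\sigma_i b) = a$.
\end{proof}
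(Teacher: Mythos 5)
Your argument reaches the correct conclusion and uses exactly the ingredients the paper uses — Lemma~\ref{lem:trap} for the shape of the real Lusztig data of $\sigma_i b$, Lemma~\ref{lem:equality-of-phis} for the value $\varphi_i(\sigma_i b) = r_i a$, and the observation (from Proposition~\ref{prop:cCS}\eqref{el1}) that when $\alpha_i$ is minimal for $\prec$, $\cc^\prec_{\alpha_i}(b') = \varphi_i(b')$ for all $b'$. That last observation is the actual content of the paper's ``follows immediately,'' and it is good that you spelled it out.

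However, your invocation of Lemma~\ref{lem:trap} is muddled. You say to apply it ``with the order $\preci$ (for which $\alpha_i$ is maximal) in place of $\prec$, together with the fact that $(\preci)^{s_i} = \prec$.'' This does not work: the hypotheses of Lemma~\ref{lem:trap} require both that $\alpha_i$ be \emph{minimal} for the order in question and that the Lusztig datum of $b$ with respect to that order be \emph{purely imaginary}. Neither holds for $\preci$ — $\alpha_i$ is maximal, and $\cc^{\preci}(b)$ has the trapezoid shape (nonzero real parts at $\alpha_i$ and $d_i\delta - r_i\alpha_i$) by the first assertion of Lemma~\ref{lem:trap}, not the purely imaginary shape. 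No substitution along these lines is needed: the \emph{second} assertion of Lemma~\ref{lem:trap}, applied with the original $\prec$ (for which $\alpha_i$ \emph{is} minimal and $\cc^\prec(b)$ \emph{is} purely imaginary), directly states that $\cc^\prec_{\alpha_i}(\sigma_i b) = r_i \cdot \cc^\prec_{d_i\delta - r_i\alpha_i}(\sigma_i b)$ and that $\cc^\prec_\beta(\sigma_i b) = 0$ for all other real $\beta$. Once you cite that part of the lemma correctly, the rest of your proof goes through as written.
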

\begin{proof}
This follows immediately from Lemmas \ref{lem:trap} and \ref{lem:equality-of-phis}. 
\end{proof}

Let $\underline \mu = \cc^{\prec}_{\delta}(\sigma_i(b)) $, and let $a$ be as above, and let $b' = ({\tilde f}_i)^{r_i a} b$.

\begin{Lemma}
 \label{lem:b-prime} 
 We have  $\cc^{\prec}_{d_i\delta-r_i\alpha_i}(b') = a$,  $\cc^{\prec}_{\delta}(b') = { {\underline \mu}}$ and otherwise $\cc^{\prec}_{\beta}(b') = 0$.
\end{Lemma}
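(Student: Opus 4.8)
Recall the data: $\prec$ has $\alpha_i$ minimal, $\underline\lambda$ is a multipartition, $b\in B(-\infty)$ satisfies $\cc^\prec_\delta(b)=\underline\lambda$ and $\cc^\prec_\beta(b)=0$ for all real $\beta$, $\sigma_i b$ has $\cc^\prec_{d_i\delta-r_i\alpha_i}(\sigma_i b)=a$, $\cc^\prec_{\alpha_i}(\sigma_i b)=r_ia$, all other real entries zero, and $\underline\mu=\cc^\prec_\delta(\sigma_i b)$. We set $b'=\tilde f_i^{\,r_ia}b$, and we must show $\cc^\prec_{d_i\delta-r_i\alpha_i}(b')=a$, $\cc^\prec_\delta(b')=\underline\mu$, and $\cc^\prec_\beta(b')=0$ for all other real roots $\beta$.

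\textbf{Plan.} The strategy is to pass between $b$, $\sigma_i b$, and $b'$ using the relation $\sigma_i b=\tilde e_i^{\,\varphi_i^*(b)}(\tilde f_i^*)^{\varphi_i^*(b)}b$ together with Lemma \ref{lem:equality-of-phis}, which gives $\varphi_i^*(b)=\varphi_i(\sigma_i b)=r_ia$. First I would observe that since $\alpha_i$ is minimal for $\prec$, Proposition \ref{prop:cCS}\eqref{el1} (in crystal-operator form, via condition (C) of Theorem \ref{th:MV-def}, which $PBW_b$ already satisfies --- but here we should argue purely at the level of Lusztig data, so I will use the characterization of $\cc^\prec$ via crystal operators in Definition \ref{ctLusztig} and Remark \ref{rem:GCdata}) tells us that applying $\tilde f_i$ decreases $\cc^\prec_{\alpha_i}$ by one and leaves all other $\cc^\prec_\beta$ unchanged, as long as $\tilde f_i$ is defined. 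So the real content is: (a) $\tilde f_i^{\,r_ia}b\neq 0$, i.e. $\varphi_i(b)\ge r_ia$; and (b) identifying the imaginary part $\cc^\prec_\delta(b')$ with $\underline\mu$.

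For (a): from $\sigma_i b=\tilde e_i^{\,r_ia}(\tilde f_i^*)^{r_ia}b$ and the fact that $\cc^\prec_{\alpha_i}(\sigma_i b)=r_ia$ while $\cc^\prec_{\alpha_i}(b)=0$, I would argue that $\varphi_i(\sigma_i b)=r_ia$ (Lemma \ref{lem:equality-of-phis}) forces $\tilde f_i^{\,r_ia}(\sigma_i b)\neq 0$ with $\cc^\prec_{\alpha_i}$ dropping to $0$ and all other entries (real and imaginary) unchanged; call this $b''=\tilde f_i^{\,r_ia}\sigma_i b$, so $\cc^\prec_\delta(b'')=\underline\mu$, $\cc^\prec_{d_i\delta-r_i\alpha_i}(b'')=a$, and all other real entries zero. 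Then I would use $\sigma_i^{-1}$ (equivalently the identity $b=(\tilde f_i^*)^{\varphi_i(\sigma_i b)}\tilde e_i^{\,\varphi_i(\sigma_i b)}\sigma_i b$, which follows from $\sigma_i$ being an involution on the relevant sub-crystal as in Proposition \ref{prop:Saito}/Saito's reflection) to relate $b''$ and $b$: since $\tilde f_i^{\,r_ia}$ does not change $\varphi_i^*$ on the relevant range (this is exactly the kind of statement in Lemma \ref{lem:equality-of-phis}'s proof, using that $\cc^\prec_{d_i\delta-r_i\alpha_i}$ commutes with $\tilde f_i$ since $d_i\delta-r_i\alpha_i\neq\alpha_i$), we get $\sigma_i b'' = \sigma_i\tilde f_i^{\,r_ia}\sigma_i b$. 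Computing the right side via the $\tilde e_i,\tilde f_i^*$ formulas should yield exactly $b'=\tilde f_i^{\,r_ia}b$: indeed $\sigma_i b''=(\tilde f_i^*)^{\varphi_i^*(b'')}\tilde e_i^{\,\varphi_i(b'')}\cdots$ unwinds, and since the imaginary and off-$\alpha_i$ real data of $b''$ match $\underline\mu$ and $a$, while $\sigma_i$ only moves weight along the $\alpha_i$ / $\delta-\alpha_i$ directions, the imaginary part is preserved: $\cc^\prec_\delta(\sigma_i b'')=\underline\mu$. Finally I would check $\sigma_i b''$ has $\cc^\prec_{\alpha_i}=r_ia$, $\cc^\prec_{d_i\delta-r_i\alpha_i}=0$ wait --- rather, I would directly verify $b'=\sigma_i b''$ by comparing weights ($\wt(b')=\wt(b)-r_ia\alpha_i$) and checking both are obtained from $b''$ by the same crystal word; then Lemma \ref{lem:trap} applied to $b''$ (whose $\cc^\prec$ is purely imaginary equal to $\underline\mu$ plus a single "imaginary-to-real" contribution $a$) gives the claimed real Lusztig data of $b'=\sigma_i b''$.

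\textbf{Main obstacle.} The delicate point is bookkeeping the imaginary part: showing $\cc^\prec_\delta(b')=\underline\mu$ rather than some other multipartition. The cleanest route is to avoid reconstructing $b'$ from $\sigma_i b''$ and instead argue symmetrically: apply Lemma \ref{lem:trap} and Lemmas \ref{lem:equality-of-phis}, \ref{lem:traptoparallel-prime} with the roles of $\prec$ and $\preci$ swapped, or equivalently apply the already-proven Lemma \ref{lem:traptoparallel-prime} to $\sigma_i(b)$ in place of $b$ (note $\sigma_i(\sigma_i(b))=b$), reading off that $\cc^\prec_\delta(b')$ equals the multipartition obtained from $\cc^\prec_\delta(\sigma_i b)=\underline\mu$ by the \emph{inverse} of the box-adding operation --- but since $\sigma_i b$ already has the "real" data $\alpha_i,d_i\delta-r_i\alpha_i$ populated with $a$, applying $\tilde f_i^{\,r_ia}$ to $b$ simply matches them up. I expect the proof to be short, perhaps half a page, but one must be careful that $\tilde f_i^{\,r_ia}$ genuinely kills the $\alpha_i$-entry without disturbing anything else, which is where minimality of $\alpha_i$ for $\prec$ (giving $\cc^\prec_{\alpha_i}=\varphi_i$ directly) is essential.
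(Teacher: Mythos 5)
There is a genuine gap here, but it stems from a typo in the paper that you seem to have inherited and then fought against unnecessarily. With $b'=\tilde f_i^{\,r_ia}b$, the lemma cannot be true for $a>0$: since $\cc^\prec_\beta(b)=0$ for all real $\beta$ and $\alpha_i$ is minimal for $\prec$, we have $\varphi_i(b)=\cc^\prec_{\alpha_i}(b)=0$, so $\tilde f_i b=0$ already. The intended definition (used explicitly in the proof of Lemma \ref{lem:bi}) is $b'=(\tilde f_i^*)^{r_ia}b$, which by the formula $\sigma_i(b)=\tilde e_i^{\,r_ia}(\tilde f_i^*)^{r_ia}b$ (Saito's formula plus Lemma \ref{lem:equality-of-phis}) equals $\tilde f_i^{\,r_ia}\sigma_i(b)$.

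Here is the key point you should have noticed: the auxiliary element you call $b''=\tilde f_i^{\,r_ia}\sigma_i(b)$ \emph{is} $b'$. Your computation of the $\prec$-Lusztig data of $b''$ --- using Lemma \ref{lem:traptoparallel-prime} to get the data of $\sigma_i(b)$, then applying $\tilde f_i$ repeatedly, which only decrements $\cc^\prec_{\alpha_i}$ because $\alpha_i$ is $\prec$-minimal --- is exactly the paper's proof, and it is correct and complete. Everything after that (the attempt to show $b'=\sigma_i b''$, the application of Lemma \ref{lem:trap} to $b''$, the ``main obstacle'' paragraph) is an attempt to reconcile your $b''$ with the wrong $b'$, and it does not cohere: for instance, Lemma \ref{lem:trap} requires purely imaginary Lusztig data and does not apply to $b''$, and there is no reason to expect $\sigma_i b''=\tilde f_i^{\,r_ia}b$. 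You had the whole argument in your first paragraph; the rest should be deleted once the definition of $b'$ is corrected.
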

\begin{proof}
By definition, we have  
\begin{align}
  \label{eq:54}
 \sigma_i(b) = {\tilde e}_i^{r_i a} ({\tilde f}_i^*)^{r_i a} b.  
\end{align}
Therefore we also have  $b' = ({\tilde f}_i^*)^{r_i a} \left(\sigma_i(b)\right) $, which implies the lemma.
\end{proof}

\begin{Lemma} \label{lem:aa}
As above, let $\underline\mu= \cc^{\prec}_\delta(\sigma_i(b))$. Let $b'' \in B(-\infty)$ be such that $\cc_\delta^{\prec}(b'')= {\underline \mu}$ and $\cc_\beta^{\prec}(b'') = 0$ for all real roots $\beta$. By Lemma \ref{lem:trap}, we have $\cc^\preci_{\alpha_i}(b'') = r_i \cdot \cc^\preci_{d_i\delta-r_i\alpha_i}(b'') = r_i \cdot a'$ for some $a'$. 
Then $
 a' \leq a.$
Pictorially,   \\
 \begin{center}
\begin{tikzpicture}[xscale=-0.7, yscale=-0.2]
\draw[line width = 0.05cm] 
(0,0)--(2,2)--(2,8)--(0,10)--cycle;

\node at (-0.6,5) {$\underline{\lambda}$};
\node at (2.5,5) {$\underline\mu$};

\node at (1.25,9.7) {$a$};
\node at (-3,5) {$\Rightarrow$};
\end{tikzpicture}
\begin{tikzpicture}[xscale=-0.7, yscale=-0.2]

\draw[line width = 0.05cm,color=gray] (0,0)--(2,2)--(2,8)--(0,10)--cycle;
\node at (-0.6,5) {$\underline\lambda$};
\node at (2.5,5) {$\underline\mu$};

\node at (1,1.8) {$a'$};
\draw[line width = 0.05cm] 
(0.5,3.5)--(2,2)--(2,8)--(0.5,6.5)--cycle;
\end{tikzpicture}
\end{center}
where the trapezoid on the left is a $2$-face of the PBW polytope of $\sigma_i(b)$, and the smaller trapezoid on the right is the corresponding $2$-face of the PBW polytope of $b''$.
\end{Lemma}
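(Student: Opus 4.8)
The plan is to reduce the statement to the rank-two affine case already handled in \cite{MT}. Every root and every decoration occurring in the lemma lies in the rank-two affine sub-root-system $\Delta_i$ consisting of the roots $k\delta$ and $k\delta\pm r_i\alpha_i$; this is a copy of the affine root system of $\asl_2$, except when $\g=A^{(2)}_{2n}$ and $i=n$, where it is a copy of that of $\ATT$, and it is exactly the set of roots carried by the image of the embedding $h_i$. Using that $h_i$ is an injective algebra morphism respecting the triangular decomposition and the integral form, that $h_i\circ T_1=T_i\circ h_i$ (so the rank-two $\sigma_1$ is carried to $\sigma_i$ via Proposition \ref{prop:Saito}), and that $h_i$ sends the rank-two real root vectors $E^{\bar e}_{k\delta\pm r_i\alpha_1}$ to $E^{\bar e}_{kd_i\delta\pm r_i\alpha_i}$ — hence, by Definition \ref{def:im-root-vectors}, the rank-two imaginary root vectors to $S^{\bar e,\lambda}_i$ — one shows that the $2$-face of $\PBW_c$ parallel to $\Delta_i$ is $h_i$ of the rank-two PBW polytope $\PBW^{\asl_2}_{c^{(2)}}$ of a well-defined rank-two crystal element $c^{(2)}$: the part of $L(\cc,\prec')$ built from $\Delta_i$-root vectors (isolated using the multiplicativity of PBW monomials and Proposition \ref{prop:lop}) is literally $h_i$ of a rank-two PBW monomial, and the change-of-order bijection \eqref{eq:10}, restricted to $\Delta_i$-data, is the corresponding rank-two bijection. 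Since $s_i=s_{\alpha_i}$ preserves $\Delta_i$, the operation $\sigma_i$ acts on the $\Delta_i$-data as the rank-two $\sigma_1$.

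Granting this reduction I would finish as follows. Apply it to $c=\sigma_i(b)$: by Lemma \ref{lem:traptoparallel-prime} together with $\underline\mu=\cc^\prec_\delta(\sigma_i(b))$, the $\Delta_i$-datum of $\sigma_i(b)$ with respect to $\prec$ is $a$ at $d_i\delta-r_i\alpha_i$, $r_ia$ at $\alpha_i$, and $\mu^{(i)}$ at $\delta$; and because $b$ is purely imaginary for $\prec$ with $\cc^\prec_\delta(b)=\underline\lambda$, the rank-two element $(\sigma_i(b))^{(2)}=\sigma_1(b^{(2)})$ is purely imaginary for the induced copy of $\preci$ with imaginary datum $\lambda^{(i)}$. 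By \cite{MT} (cf.\ \cite[Theorem 3.11]{MT}) the polytope $\PBW^{\asl_2}_{(\sigma_i(b))^{(2)}}$ is a rank-two affine MV polytope, hence satisfies condition (I) of Theorem \ref{th:MV-def}; applied to the partition $\lambda^{(i)}$, that condition forces $a=\lambda^{(i)}_1$ and $\mu^{(i)}=\lambda^{(i)}\backslash\lambda^{(i)}_1$. Running the reduction once more with $c=\sigma_i(b'')$ — where $b''$ is purely imaginary for $\prec$ with datum $\underline\mu$, so $(\sigma_i(b''))^{(2)}$ is purely imaginary for the induced $\preci$ with datum $\mu^{(i)}$, and $\cc^\prec_{d_i\delta-r_i\alpha_i}(\sigma_i(b''))=a'$ by Lemma \ref{lem:traptoparallel-prime} applied to $b''$ — condition (I) in rank two applied to $\mu^{(i)}=\lambda^{(i)}\backslash\lambda^{(i)}_1$ gives $a'=\bigl(\lambda^{(i)}\backslash\lambda^{(i)}_1\bigr)_1=\lambda^{(i)}_2$. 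Since $\lambda^{(i)}_2\le\lambda^{(i)}_1$, this is $a'\le a$.

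The main obstacle is the reduction in the first paragraph — making precise that the $2$-face parallel to $\Delta_i$ is genuinely a rank-two PBW polytope on which the results of \cite{MT} apply verbatim. The delicate point is that $h_i$ intertwines only the braid operator $T_i$ and the crystal operators indexed by $i$, not those for other indices; but every operation entering the construction of $b,\sigma_i(b),b',b''$ and the comparison of their Lusztig data uses only the index $i$ and convex orders whose restriction to $\Delta_i$ we control, so each acts on the $\Delta_i$-part ``through $h_i$''. Concretely I would verify, via Proposition \ref{prop:cCS} and Proposition \ref{prop:Saito}, that the factorization isolating the $\Delta_i$-part of a PBW monomial is compatible modulo $q_s^{-1}$ with $\widetilde e_i,\widetilde f_i,\widetilde e_i^{*},\widetilde f_i^{*},\sigma_i,\sigma_i^{*}$, which is exactly what is needed to transport condition (I). As a fallback avoiding the full rank-two polytope statement, the case $a=0$ is immediate (then $\varphi_i^*(b)=0$, so $\sigma_i(b)=b$, $\underline\mu=\underline\lambda$, $b''=b$ and $a'=a=0$), while for $a\ge1$ one has $\wt(\underline\mu)<\wt(\underline\lambda)$, so condition (I) is already available for $\underline\mu$; this yields $a'=\mu^{(i)}_1$ and reduces the claim to the single inequality $\mu^{(i)}_1\le a$, which is precisely the rank-two fact extracted above.
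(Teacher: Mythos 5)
Your approach is genuinely different from the paper's, and it contains a gap that I don't think can be repaired without essentially redoing the work the paper does elsewhere. The paper's proof of Lemma~\ref{lem:aa} is short and direct: it uses Lemma~\ref{lem:b-prime} to write $b'\equiv S^{\prec,\underline\mu}E^{\prec,(a)}_{d_i\delta-r_i\alpha_i}$, then writes $E^{\prec}_{d_i\delta-r_i\alpha_i}=T_w(E_j)$ and applies a string of Saito reflections $\sigma^*_{w^{-1}}$, reducing $a'\le a$ to the elementary crystal inequality $\varphi_j\bigl((\tilde f_j^*)^a x\bigr)\le\varphi_j(x)$ from Proposition~\ref{prop:comb-char}. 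No rank-two embedding and no invocation of condition~(I) is used.

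The gap in your argument is the rank-two reduction itself, which you flag but do not close. The claim that the $2$-face of $\text{PBW}_c$ parallel to $\Delta_i=\{k\delta,\,k\delta\pm r_i\alpha_i\}$ is $h_i$ of a rank-two PBW polytope is precisely the ``small subtlety for faces parallel to $\delta$'' the paper warns about after Theorem~\ref{th:MV-def}. The obstruction is that the change-of-order bijection~\eqref{eq:10} is defined modulo $q_s^{-1}\mathcal L$ inside the full algebra $\Uplus$, not inside the image of $h_i$; when you factor $L(\cc,\prec)=L_1\cdot L_i\cdot L_2$ with $L_i$ the $\Delta_i$-part and re-expand in $L(\cdot,\prec')$, the non-$\Delta_i$ factors $L_1,L_2$ do not commute past $L_i$ and there is no a priori reason the leading term factors the same way, with the same $\Delta_i$-part, in the new order. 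Concretely, proving that $T_i\bigl(\prod_{j\ne i}S^{\prec,\lambda^{(j)}}_j\bigr)\equiv\prod_{j\ne i}S^{\prec,\lambda^{(j)}}_j$ modulo $q_s^{-1}\mathcal L$ (which is what makes the non-$\Delta_i$ part inert under $\sigma_i$) is exactly equation~\eqref{eq:49} in Lemma~\ref{lem:a-eq-zero}, and it already requires the inductive hypothesis that (I) holds for smaller-weight multipartitions.

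There is a second, structural, warning sign. If the reduction held outright, you would immediately get $a=\lambda^{(i)}_1$ and $\mu^{(i)}=\lambda^{(i)}\backslash\lambda^{(i)}_1$ from rank-two condition~(I); but that is precisely the content of Lemma~\ref{lem:bi} ($\Phi_i=\mathrm{id}$), which the paper spends Lemmas~\ref{lem:aa} through~\ref{lem:bi} proving via a delicate induction with upper-triangularity and Pieri-rule arguments. Deriving the endpoint as a one-line corollary of a reduction you haven't established is, in effect, assuming the conclusion. Your proposed ``fallback'' does not escape this: for $a\ge 1$ you still need $\mu^{(i)}_1\le a$, and the only justification you offer is the same rank-two identification $a=\lambda^{(i)}_1$, $\mu^{(i)}=\lambda^{(i)}\backslash\lambda^{(i)}_1$, which is again (I) for $\underline\lambda$ — unavailable at this stage of the induction. (The $a=0$ case you treat correctly, and the use of Proposition~\ref{rem:partial-char} to get $a'=\mu^{(i)}_1$ for $\wt(\underline\mu)<\wt(\underline\lambda)$ is fine; the missing piece is the inequality itself.) If you want to finish along these lines you would need to actually carry out the compatibility check you sketch — that the $\Delta_i$-factor of the PBW monomial is stable, modulo $q_s^{-1}$, under $\tilde e_i,\tilde f_i,\tilde e_i^*,\tilde f_i^*,\sigma_i,\sigma_i^*$ — and at that point you will find yourself reproving Lemma~\ref{lem:a-eq-zero}. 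The paper's route through $b'$, the braid-group conjugation $E^{\prec}_{d_i\delta-r_i\alpha_i}=T_w(E_j)$, and the $\varphi_j$-inequality avoids all of this.
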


\begin{proof}
By Corollary \ref{cor:im-sym} we have $\cc^{\preci}(\sigma_i(b)) = \cc^{\prec}(b) = \underline \lambda$. Let $b' = ({\tilde f}_i)^{r_i a} b$. 
By Lemma \ref{lem:b-prime},
\begin{align}
  \label{eq:21}
 b' \equiv S^{\prec,\underline \mu} E^{\prec,(a)}_{d_i\delta - r_i\alpha_i} .
\end{align}
Choosing an approximating one-row order, we may assume that  $\prec$ is one-row. The {\pcom typo} we can write $E^{\prec}_{d_i\delta - r_i\alpha_i} = T_{w}(E_j)$ for some $w \in W$ and $j \in I$. Choose a reduced decomposition $w = s_{i_1} \cdots s_{i_\ell}$, and let $\sigma^{*}_{w^{-1}} = \sigma^*_{i_\ell} \cdots \sigma^*_{i_1}$. 
Then $b'' = ({\tilde f}_j^*)^{ a} \sigma_{w^{-1}}^* b'$ and 
\begin{align}
  \label{eq:22}
  a' = \varphi_j(({\tilde f}_j^*)^{ a} \sigma_{w^{-1}}^* b') \leq \varphi_j( \sigma_{w^{-1}}^* b') = a,
\end{align}
where the inequality is by the characterization of $B(-\infty)$ in Proposition \ref{prop:comb-char}.
\end{proof}

  \begin{Lemma}
    \label{lem:a-eq-zero}
Let $i \in \bar I$, and let $\prec$ be a convex order of the standard coarse type with $\alpha_i$ minimal. Let $\underline \lambda = (\lambda^{(1)}, \cdots, \lambda^{(n)})$ be a multipartition with $\lambda^{(i)} \neq 0$ and suppose condition (I) is known for all multipartitions of weight less than $\underline \lambda$. Then $\varphi^*_{i}\left(b(\underline\lambda,\prec)\right) > 0$.
  \end{Lemma}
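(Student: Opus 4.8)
The plan is to reduce the assertion to the rank-two affine case. Write $b=b(\underline\lambda,\prec)$. Since $\wt(b)=\wt(\underline\lambda)\cdot\delta$ we have $\langle\wt(b),\alpha_i^\vee\rangle=0$, so $\varphi_i^*(b)=\eps_i^*(b)$ and proving $\varphi_i^*(b)>0$ is the same as proving $\tilde f_i^*(b)\neq 0$. By Proposition~\ref{prop:cCS}\eqref{el2}, $\varphi_i^*(b)$ equals the $\alpha_i$-component of the Lusztig datum $\cc^{\preci}(b)$, and by Lemma~\ref{lem:trap} this component equals $r_i\cdot a$, where $a:=\cc^{\preci}_{d_i\delta-r_i\alpha_i}(b)$ and all other real components of $\cc^{\preci}(b)$ vanish. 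So the claim is exactly that $a>0$.

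To get at $\tilde f_i^*(b)$ I would use the defining PBW vector. The crystal basis vector $L(\underline\lambda,\prec)=S^{\prec,\underline\lambda}$ represents $b$ modulo $q_s^{-1}\cL$ (Proposition~\ref{prop-crystal-basis}), and, $\prec$ having coarse type $\bar e$,
\begin{align*}
S^{\prec,\underline\lambda}=S_i^{\bar e,\lambda^{(i)}}\cdot\prod_{j\neq i}S_j^{\bar e,\lambda^{(j)}},
\end{align*}
the factors commuting. Because $\alpha_i$ is minimal of the standard coarse type, the root $\gamma_i$ of Definition~\ref{def:im-root-vectors} is $\alpha_i$ itself, so, using $E^{\bar e}_{\alpha_i}=E_i$ (Corollary~\ref{cor:simpleroots}), $\IPsi{\bar e}{i,k}=E^{\bar e}_{kd_i\delta-\alpha_i}E_i-q_i^{-2}E_iE^{\bar e}_{kd_i\delta-\alpha_i}$, and since $\lambda^{(i)}\neq\emptyset$ the factor $S_i^{\bar e,\lambda^{(i)}}$ is a nonconstant polynomial in these. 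Now $S_i^{\bar e,\lambda^{(i)}}$ lies in the image of the rank-two embedding $h_i\colon U_q(\asl_2)\to U_q(\g)$ (resp.\ $U_q(\ATT)\to U_q(\g)$), while each $S_j^{\bar e,\lambda^{(j)}}$ with $j\neq i$ lies in the image of $h_j$, whose generators $E_j$ and $E^{\bar e}_{d_j\delta-r_j\alpha_j}$ are annihilated by the Kashiwara operator attached to $i$. Using the Leibniz-type rule for that operator one reduces the nonvanishing of $\tilde f_i^*(b)$ to the nonvanishing of $\tilde f_1^*$ on the canonical basis vector $b^{(2)}$ of $U_q^+(\asl_2)$ (resp.\ $U_q^+(\ATT)$) whose Lusztig datum, for the one-row order with $\alpha_1$ minimal, is the purely imaginary datum $\lambda^{(i)}$.

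That $\tilde f_1^*(b^{(2)})\neq 0$ is the rank-two instance of condition~(I) — equivalently of the present lemma in rank two — and is established in \cite{MT}: reflecting $b^{(2)}$ to the order with $\alpha_1$ maximal yields a Lusztig datum whose $d_1\delta-r_1\alpha_1$-component is the largest part $\lambda^{(i)}_1>0$, hence whose $\alpha_1$-component is $r_1\lambda^{(i)}_1>0$, so $\tilde f_1^*(b^{(2)})\neq 0$ by Proposition~\ref{prop:cCS}\eqref{el2}. Transporting this back through $h_i$ gives $\tilde f_i^*(b)\neq 0$, i.e.\ $a>0$ and $\varphi_i^*(b(\underline\lambda,\prec))>0$. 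The hypothesis that condition~(I) holds for multipartitions of weight $<\wt(\underline\lambda)$ is what underpins the rank-two reduction on the polytopal side: through Proposition~\ref{rem:partial-char} it identifies $PBW_{b'}=MV_{b'}$ for the lower-weight elements $b'$ occurring, hence shows that the two-dimensional faces being used are genuine rank-two affine MV polytopes. I expect the main technical obstacle to be precisely this reduction: propagating $\tilde f_i^*$ (equivalently Kashiwara's $e_i'$-operator) across the product $S_i^{\bar e,\lambda^{(i)}}\cdot\prod_{j\neq i}S_j^{\bar e,\lambda^{(j)}}$, and checking that $h_i$ intertwines the rank-two Kashiwara operators with their ambient counterparts — which in turn rests on $h_i$ preserving the triangular decomposition and the integral form, hence the canonical bases and their crystal structures.
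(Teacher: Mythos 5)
Your reduction to rank two is in the right spirit (the paper also leans on the rank-two result from \cite{MT}), but the mechanism you propose for the reduction has a genuine gap. The pivotal claim is that the factors $S_j^{\bar e,\lambda^{(j)}}$ with $j\neq i$ are transparent to the $i$-th Kashiwara operator because their ``generators $E_j$ and $E^{\bar e}_{d_j\delta-r_j\alpha_j}$ are annihilated by the Kashiwara operator attached to $i$.'' This is false: the root $d_j\delta-r_j\alpha_j$ contains $\alpha_i$ in its support (every simple root appears in $\delta$ with positive coefficient), so $e_i'\bigl(E^{\bar e}_{d_j\delta-r_j\alpha_j}\bigr)$ is generically nonzero (e.g.\ in affine $\mathfrak{sl}_3$ with $i=1$, $j=2$, the vector $E_{\delta-\alpha_2}$ is a $q$-commutator of $E_0$ and $E_1$ and $e_1'$ of it is a nonzero multiple of $E_0$). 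So the Leibniz computation does not localize $\tilde f_i^*$ onto the factor $S_i^{\bar e,\lambda^{(i)}}$. A second unproved (and genuinely delicate) step is that $h_i$ intertwines the rank-two crystal operators with the ambient ones: $h_i$ is only known to respect the triangular decomposition, the integral form, and the braid operators ($h_i\circ T_1=T_i\circ h_i$); it is not known to respect $\tilde e_i^*$ or the crystal lattice, since $h_i(F_1)\neq F_i$ in general and $e_i'$ is defined via adjointness against $F_i$. You flag this as ``the main technical obstacle,'' but it is precisely the part that is not available and that the paper is careful to avoid.

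The paper's actual argument goes by contradiction and stays entirely on the side of braid operators and PBW triangularity, which is how it dodges both problems. Assuming $\varphi_i^*(b(\underline\lambda,\prec))=0$ forces $\sigma_i$ to fix $b$, hence $T_i S^{\prec,\underline\lambda}\equiv S^{\prec,\underline\lambda}\bmod q_s^{-1}\cL$. The inductive hypothesis on condition (I) (for the smaller multipartition $(\lambda^{(j)})_{j\neq i}$) gives $T_i\bigl(\prod_{j\neq i}S_j^{\prec,\lambda^{(j)}}\bigr)\equiv\prod_{j\neq i}S_j^{\prec,\lambda^{(j)}}$, while the rank-two input is used through the braid operator, which does transport along $h_i$: by \cite[Theorem 4.17]{MT}, $T_iS_i^{\prec,\lambda^{(i)}}$ has leading PBW term with $\alpha_i$-component $\lambda^{(i)}_1\neq 0$. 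Corollary \ref{cor:beck-nakajima-upper-triangularity} then shows the purely imaginary part of the product lies in $q_s^{-1}\cL_0$, contradicting the congruence. If you want to keep your outline, you would need to replace the Kashiwara-operator bookkeeping across the product by this kind of PBW-triangularity argument; as written, the two central steps of your reduction do not go through.
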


  \begin{proof}
    
Suppose instead that $\varphi^*_{i}\left(b(\underline\lambda,\prec)\right)=0$. Since the weight of $b(\underline\lambda,\prec)$ is imaginary, $\sigma_i(b(\underline\lambda,\prec)) = b(\underline\lambda,\prec)$, and so
\begin{align}
  \label{eq:13}
  T_i (S^{\prec,\underline \lambda}) \equiv S^{\prec,\underline \lambda} \mod q_s^{-1}\cL.
\end{align}

Recall that
\begin{align}
  \label{eq:43}
S^{\prec,\underline \lambda} = \prod_{j  \neq i} S^{\prec,\lambda^{(j)}}_j \cdot  S^{\prec, \lambda^{(i)}}_i
\end{align}
and by Corollary \ref{cor:im-sym}, that:
\begin{align}
  \label{eq:42}
S^{\preci,\underline \lambda} = T_i (S^{\prec,\underline \lambda})
\end{align}

Applying condition (I) inductively we have
\begin{align}
  \label{eq:49}
T_i \left(\prod_{j  \neq i} S^{\prec,\lambda^{(j)}}_j \right) = \left(\prod_{j  \neq i} S^{\preci,\lambda^{(j)}}_j \right) \equiv \left(\prod_{j  \neq i} S^{\prec,\lambda^{j}}_j \right)  \mod q_s^{-1} \cL.
\end{align}
Therefore 
\begin{align}
  \label{eq:50}
 T_i \left(\prod_{j  \neq i} S^{\prec,\lambda^{j}}_j \right) =   \left(\prod_{j  \neq i} S^{\prec,\lambda^{j}}_j \right) + x_{re} + x_{im}
\end{align}
where  $x_{re}$ is a linear combination of $\prec$-PBW basis vectors with Lusztig data involving real parts,  $x_{im}$ is a linear combination of $\prec$-PBW basis vectors with purely imaginary Lusztig, and $x_{re}, x_{im} \in q^{-1} \cL$.

By \cite[Theorem 4.17]{MT}, there is a PBW basis vector $L(\dd,\prec)$ with $\dd_{\alpha_i} =  \lambda^{(i)}_1 \neq 0$ such that 
\begin{align}
  \label{eq:47}
T_i S^{\prec, \lambda^{(i)}}_i =  L(\dd,\prec) + y_{re} + y_{im}
\end{align}
where $y_{re}$ is a linear combination of $\prec$-PBW basis vectors involving Lusztig data with real parts, $y_{im}$ is a linear combination of $\prec$-PBW basis vectors. Then
\begin{align}
  \label{eq:48}
  T_i S^{\prec,\underline \lambda} \hspace{-0.1cm} =  \hspace{-0.1cm}  T_i  \hspace{-0.1cm}  \left( \prod_{j  \neq i} S^{\prec,\lambda^{j}}_j \right)   T_i S^{\prec, \lambda^{(i)}}_i  \hspace{-0.2cm}  =  \hspace{-0.cm} 
 \left( \prod_{j  \neq i} S^{\prec,\lambda^{(j)}}_j  + x_{re} + x_{im} \right) \hspace{-0.1cm}   \left( L(d,\prec) + y_{re} + y_{im} \right).
\end{align}
Rewriting in the $\prec$-PBW basis, by Corollary \ref{cor:beck-nakajima-upper-triangularity}, terms with purely imaginary Lusztig data can only arise from the product of PBW basis vectors with purely imaginary Lusztig data. 

Let $\cL_0$ denote the $\cA$-span of $\prec$-PBW basis vectors with purely imaginary Lusztig data. We know that the imaginary root vectors that form a $\cA$-basis of $\cL_0$ multiply exactly as Schur functions; in particular, $q_s^{-1}\cL_0$ is closed under multiplication by $\cL_0$. Therefore, the purely imaginary terms that appear when we expand \eqref{eq:48} in the $\prec$-PBW basis must lie in $q_s^{-1} \cL_0$. This contradicts \eqref{eq:13}.
\end{proof}

\begin{Definition} \label{order-on-mp}
Define a partial order on multipartitions by, for 
$\underline \lambda = (\lambda^{(1)}, \cdots, \lambda^{(n)})$ and $\underline {\tilde \lambda} = (\tilde\lambda^{(1)}, \cdots, {\tilde\lambda}^{(n)})$,  $\underline\lambda \geq \underline{\tilde\lambda}$ if
\begin{itemize}
\item $\wt(\underline{\tilde\lambda}) = \wt(\underline{\lambda})$, and
\item ${\tilde\lambda}^{(i)}$ dominates $\lambda^{(i)}$ for each $i \in \bar I$. 
\end{itemize}
\end{Definition}

  Let $\prec$ be a convex order of the standard coarse type with $\alpha_i$ minimal, and let $\underline\lambda  = (\lambda^{(1)}, \cdots, \lambda^{(n)})$ be a multipartition. Applying Lemma \ref{lem:traptoparallel-prime},
  \begin{align}
    \label{eq:11}
    \sigma_i b(\underline\lambda,\prec) = b(\cc,\prec)
  \end{align}
where $ \cc_\delta = {\underline\mu} = (\mu^{(1)}, \cdots, \mu^{(n)})$, $\cc_{\alpha_i} = r_i \cdot \cc_{d_i\delta - r_i\alpha_i} = r_i a$ for some non-negative integer $a$, and $\cc_{\beta}= 0$ otherwise.  Define an endomorphism $\Phi_i$ on the set multipartitions by
  \begin{align}
    \label{eq:12}
   \Phi_i : \underline \lambda  \mapsto {\underline \mu} \sqcup_i a,
  \end{align}
  where $\underline \mu \sqcup_i a$ is obtained by adding a part of size $a$ to the $i$-th partition in $\underline \mu$.

  \begin{Lemma} \label{lem:bi} $\Phi_i$ is the identity map. 
  \end{Lemma}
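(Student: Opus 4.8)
The plan is to prove $\Phi_i$ is the identity by showing it is weakly increasing with respect to the dominance-type partial order on multipartitions of Definition \ref{order-on-mp}, and then combining this with the reverse inequality that is already essentially in hand. The key observation is that $\Phi_i$ preserves weight: $\wt(\underline\mu \sqcup_i a) = \wt(\underline\mu) + d_i a = \wt(\underline\lambda)$, since applying $\sigma_i$ does not change the weight of the polytope and, by Lemma \ref{lem:traptoparallel-prime}, the real Lusztig data of $\sigma_i b(\underline\lambda,\prec)$ with respect to $\prec$ contributes $r_i a \cdot \alpha_i + a(d_i\delta - r_i\alpha_i) = a d_i \delta$ of imaginary weight, exactly matching the $d_i\lambda^{(i)}_1$ that was removed from the $i$-th partition. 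So on the finite set of multipartitions of any fixed weight, it suffices to show $\Phi_i$ is order-preserving (or order-reversing) and has a one-sided inequality forcing it to fix everything.

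First I would set up the induction: assume condition (I) — equivalently the conclusion of Lemma \ref{lem:bi} together with the parallelism statements — is known for all multipartitions of weight strictly less than $\underline\lambda$, as in Lemma \ref{lem:a-eq-zero}. The base case is $\underline\lambda = \emptyset$, where everything is trivial. Now take $\underline\lambda$ of the current weight and let $\underline\mu \sqcup_i a = \Phi_i(\underline\lambda)$ as in \eqref{eq:12}. Lemma \ref{lem:aa} gives exactly the monotonicity input I need: if $\underline\lambda \geq \underline{\tilde\lambda}$ in the order of Definition \ref{order-on-mp}, then comparing the trapezoidal $2$-faces of the PBW polytopes of $\sigma_i b(\underline\lambda,\prec)$ and $\sigma_i b(\underline{\tilde\lambda},\prec)$ shows $a' \leq a$, i.e. the part added to the $i$-th component does not increase. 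One then needs the analogous statement for the $\underline\mu$-component, which follows from the same trapezoid comparison applied to the faces parallel to $\delta$ in the other coordinate directions, using Corollary \ref{cor:im-sym} to move between coarse types. Chaining these, $\Phi_i$ sends the partial order into itself in an order-compatible way.

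Next I would extract the reverse inequality. Applying $\sigma_i$ again: since $\sigma_i$ is (up to the bookkeeping in Proposition \ref{prop:Saito}) an involution-like operation in this imaginary setting, and $\sigma_i b(\Phi_i(\underline\lambda), \prec^{s_i})$ recovers $b(\underline\lambda,\prec)$, applying the monotonicity of $\Phi_i$ (now for $\prec^{s_i}$, same coarse type after reflecting) yields $\underline\lambda \geq \Phi_i(\underline\lambda) \geq \Phi_i^2(\underline\lambda) \geq \cdots$ is impossible to be strict on a finite set unless $\Phi_i(\underline\lambda) = \underline\lambda$ eventually, but more directly: a weight-preserving, order-preserving self-map of a finite poset that is also a ``contraction'' in the sense that $\Phi_i(\underline\lambda) \leq \underline\lambda$ always (which is what Lemma \ref{lem:aa} with $\underline{\tilde\lambda} = \Phi_i(\underline\lambda)$, unwound through Lemma \ref{lem:a-eq-zero} guaranteeing $a > 0$ hence the new partition is genuinely a refinement, gives) and $\Phi_i(\underline\lambda) \geq \underline\lambda$ by the symmetric argument with the roles of $\prec$ and $\prec^{s_i}$ swapped, must be the identity. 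The crucial non-degeneracy input is Lemma \ref{lem:a-eq-zero}: $\varphi^*_i(b(\underline\lambda,\prec)) > 0$ whenever $\lambda^{(i)} \neq \emptyset$, which guarantees $a = \varphi^*_i/r_i > 0$ so that the operation genuinely ``strips'' a box and the combinatorics of removing and re-adding a maximal part is consistent.

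I expect the main obstacle to be pinning down the exact two-sided comparison so that it forces equality rather than merely trapping $\Phi_i(\underline\lambda)$ between $\underline\lambda$ and something. The subtle point is that Lemma \ref{lem:aa} as stated compares $a$ for $\underline\lambda$ with $a'$ for a \emph{smaller} multipartition $\underline{\tilde\lambda}$, so to get a clean statement I need to verify that $\Phi_i$ is monotone in the \emph{multipartition} argument, not just that individual added-part sizes compare — this requires carefully tracking the partition $\underline\mu$ through the braid-group reflections $\sigma^*_{w^{-1}}$ in the proof of Lemma \ref{lem:aa} and confirming that the entire $2$-face (all coordinates of $\underline\mu$, not just the new part) shrinks in dominance. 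Once that monotonicity is secured, the self-map argument on the finite poset is formal: the only order-preserving involution-compatible endomorphism fixing the unique minimal and maximal weight-graded pieces and satisfying both $\Phi_i(\underline\lambda) \le \underline\lambda$ and (via the reflected order) $\Phi_i(\underline\lambda) \ge \underline\lambda$ is the identity. I would write the finite-poset step last, as a short paragraph, once the geometric monotonicity lemmas are fully in place.
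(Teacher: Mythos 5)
Your plan has a genuine gap: it replaces the paper's core computation with appeals to monotonicity and symmetry that the cited lemmas do not actually support. The paper's proof establishes two things: (1) $\Phi_i$ is injective, because $a \geq \mu^{(i)}_1$ (from Lemma~\ref{lem:aa} together with the inductive hypothesis, or from Lemma~\ref{lem:a-eq-zero} when $a=0$), so $a$ is recoverable as the largest part of the $i$-th component of $\underline\mu\sqcup_i a$; and (2) $\underline\lambda \leq \Phi_i(\underline\lambda)$ in the order of Definition~\ref{order-on-mp}. An injective progressive self-map of a finite set is the identity, so this suffices. Step (2) carries most of the weight, and it is proved by a specific computation: expand $b'=(\tilde f_i^*)^{r_i a}b$ via Theorem~\ref{th:UpperTriangularity}, right-multiply by $E_i^{(r_i a)}$, apply \cite[Prop.~3.17]{BN} and the Pieri rule for the imaginary Schur-type vectors, and invoke Proposition~\ref{CanonicalBasisCrystalFormula-prime} to force $b(\underline\lambda,\prec)$ to appear in the resulting expansion, giving $\underline\lambda \leq \underline\mu\sqcup_i a$ (see the argument around \eqref{eq:14}--\eqref{eqa:3}).

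Your proposal supplies no substitute for that inequality. You propose the two-sided estimate $\Phi_i(\underline\lambda)\leq\underline\lambda$ and $\Phi_i(\underline\lambda)\geq\underline\lambda$, but neither side is actually established. Lemma~\ref{lem:aa} does not, as you read it, compare the $a$-values of two dominance-comparable multipartitions $\underline\lambda\geq\underline{\tilde\lambda}$; it compares the $a$ of $\underline\lambda$ with the $a'$ of $\underline\mu$, and $\underline\mu$ has strictly smaller weight than $\underline\lambda$, so the two are not even comparable in the order of Definition~\ref{order-on-mp}. After feeding in the inductive hypothesis, the content of Lemma~\ref{lem:aa} is just $\mu^{(i)}_1\leq a$: a statement about a single part of a single component, saying nothing about the dominance relation between $\underline\lambda$ and $\underline\mu\sqcup_i a$. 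Likewise, the claim that $\Phi_i(\underline\lambda)\geq\underline\lambda$ follows ``by the symmetric argument with $\prec$ and $\prec^{s_i}$ swapped'' does not hold up: $\sigma_i$ is not involutive in this setting (Lemma~\ref{lem:equality-of-phis} gives $\varphi_i(\sigma_ib)=r_ia>0$, so $\sigma_ib$ is not at the top of its $i$-string), $\prec^{s_i}$ has coarse type $\bar s_i$ rather than $\bar e$ so the input \cite[Prop.~3.17]{BN} would need to be transported, and running the construction the other way just regenerates the same unknown $a$. You acknowledge at the end that a monotonicity verification is missing; that verification is exactly the hard content and is essentially equivalent to the Pieri-rule computation you are trying to bypass. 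The proposal needs to either reproduce that computation or give a genuinely different proof of $\underline\lambda\leq\underline\mu\sqcup_i a$.
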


\begin{proof}
Since the set of multipartitions of a given weight is finite, it suffices to show that $\Phi_i$ is an injective increasing function, meaning that, for all multipartitions $\underline \lambda$,   \begin{align}
    \label{eq:013}
    \underline \lambda \leq \Phi_i(\underline \lambda). 
  \end{align}
Proceed by induction on the weight of $\underline \lambda$. It is clear that the map $\underline \lambda \mapsto (\underline \mu, a)$ is injective, so to prove that $\Phi_i$ is an injection is suffices to prove that $a \geq { \mu}^{(i)}_1$, since then we can recover $a$ as the largest part of the $i$-th partition of $\underline \mu \sqcup_i a$.  If $a > 0$, then $a \geq { \mu}^{(i)}_1$ by Lemma \ref{lem:aa} and induction. If $a = 0$, then $\lambda^{(i)} = \emptyset$ by Lemma \ref{lem:a-eq-zero} and $\sigma_i(b) = b$, so $\underline\mu = \underline\lambda$. Hence, $\mu^{(i)}_1 = 0$, and we have our inequality $a \geq { \mu}^{(i)}_1$. Furthermore inequality \eqref{eq:013} is clear when $a=0$. It remains to prove \eqref{eq:013} when $a > 0$.

Let $b = b(\underline\lambda,\prec)$, and let $b' = ({\tilde f_i}^*)^{r_i a} b = ({\tilde f_i})^{r_i a} \sigma_i(b)$. By Lemma \ref{lem:b-prime}, $b'= b(\cc', \prec)$ where $\cc'_{\delta}=\underline \mu, \cc'_{d_i\delta-r_i\alpha_i}= a$, and $\cc'_\beta = 0$ otherwise. 
By upper triangularity of the PBW basis with respect to the canonical basis,
\begin{align}
  \label{eq:14}
 b' = S^{\underline \mu} E_{d_i\delta-r_i\alpha_i}^{(a)}+ \sum_{\cc'' >_\ell \cc'} a_{\cc',\cc''}L(\cc'',\prec).  
\end{align}
The fact that the inequality ${\cc'' >_\ell \cc'}$, is strict follows because otherwise, by definition, ${\cc'' >_r \cc'}$, and the only way for this to occur is for $\cc''_{d_i \delta - r_i \alpha_i} > a$, which is impossible since $\wt(\cc'') = \wt(\underline \mu) + a \cdot (d_i \delta - r_i \alpha_i)$. 
Thus
\begin{align}
\label{eqa:1} 
b' E_i^{(r_i a)} &  =S^{\underline \mu} E_{d_i\delta-r_i\alpha_i}^{(a)} E_i^{(r_i a)} +\sum_{\cc'' >_\ell \cc'} a_{\cc',\cc''}L(\cc'',\prec) \cdot E_i^{(a)}.
\end{align}
By \cite[Proposition 3.17]{BN} (the statement there is only for the order $\prec_0$, but we can extend it to any order of the standard coarse type using Proposition \ref{prop:lop}), \begin{align}
  \label{eq:15}
E_{d_i\delta-r_i\alpha_i}^{(a)} E_i^{(r_i a)} = S^{(a)_i} + \sum_{\cc''' : \cc''' \text{ has some real part}} b_{\cc'''}L(\cc''',\prec).
\end{align}
Here $(a)_i$ is the multipartition whose $i$-th partition is the one-part partition of size $a$, and whose other partitions are all empty.
Combining and applying Lemma \ref{lem:upper-triangularity-of-multiplication}, \begin{align}
   b' E_i^{(r_i a)} &=  S^{\underline \mu} S^{(a)_i}+ \sum_{\cc'''' : \cc'''' \text{ has some real part}} d_{\cc''''}L(\cc'''',\prec) \\
\label{eqa:3} &= S^{ \underline \mu \sqcup (a)_i}+ \sum_{\underline \nu <   \underline \mu \sqcup (a)_i} a_{\underline \nu}S^{\underline \nu}+ \sum_{\cc'''' : \cc'''' \text{ has some real part}} d_{\cc''''}L(\cc'''',\prec).
 \end{align}
 Here, \eqref{eqa:3} is the Pieri rule, and the coefficients $a_{\underline \nu}$ are zero or one accordingly. 
Using the triangularity with the canonical basis, 
 $$ b' E_i^{(r_ia)}= b( \underline \mu \sqcup (a)_i,\prec) + \sum_{\underline \nu <   \underline \mu \sqcup (a)_i} a_\nu b( \underline \nu, \prec) + \text{ canonical basis terms with real parts}.$$
By Proposition \ref{CanonicalBasisCrystalFormula-prime} the correct canonical basis element must show up here. By construction this element is $b(\prec, \underline \lambda)$, so we have shown that $\underline \lambda \leq \underline \mu \sqcup (a)_i$. 
\end{proof}

By Corollary~\ref{cor:im-sym} and Proposition \ref{prop:Saito}, we have that $b(\underline\lambda,\prec^{s_i}) = \sigma_i b(\lambda,\prec)$. By Lemma \ref{lem:bi}, we conclude that  $b(\underline\lambda,\prec^{s_i}) = b( \cc,\prec)$ where 
$  \cc_\delta$ is the multipartition obtained from $\underline\lambda$ by removing the largest part from $\lambda^{(i)}$, $\cc_{\alpha_i} = r_i \cdot \cc_{d_i\delta - r_i\alpha_i} = r_i \cdot \lambda^{(i)}_1$, and $ \cc_{\beta} = 0$ for all other real roots. This exactly establishes Condition (I) of Theorem \ref{th:MV-def}, completing the proof of Theorem \ref{thm:PBWMV}.

\begin{Remark}
The above gives an independent construction of a map satisfying Theorem \ref{th:MV-def}, and can therefore be used to develop affine MV polytopes independently of the preprojective algebra construction of \cite{BKT} or the KLR algebra construction of \cite{TW}. 
\end{Remark}

\end{document}